\newtheorem{theorem}{Theorem}
\newtheorem{proposition}[theorem]{Proposition}
\newtheorem{lemma}[theorem]{Lemma}
\newtheorem{example}[theorem]{Example}
\newtheorem{remark}[theorem]{Remark}
\newtheorem{corollary}[theorem]{Corollary}
\newtheorem{definition}[theorem]{Definition}
\newcommand{\C}{\mathcal{C}}
\def\E{\mathcal{E}}
\def\Fq{{\mathbb F}_q}
\def\A{{\mathbb A}}
\def\D{{\mathcal D}}
\def\Fqn{{\mathbb{F}}_q^n}
\def\Fqm{{\mathbb F}_{q^m}}
\def\Fqmn{{\mathbb F}_{q^m}^n}
\def\rank{\mathop{\rm rank}}
\def\M{{\mathbb M}}
\def\MM{\mathcal{M}}
\def\MC{{\mathcal{M}_{\C}}}
\def\Mat{{\mathbb M}_{m\times n}(\Fq)}
\def\F{\mathbb{F}}
\def\FQ{\mathbb{F}_{Q}}
\def\FQt{\mathbb{F}_{\tilde{Q}}}
\def\nn{\textbf{n}}
\def\r{\mathbf{r}}
\begin{document}

\title{Weight spectra of Gabidulin rank-metric codes and Betti numbers}


\thanks{The authors are partially supported by grant 280731 from the Research Council of Norway,  and  by the project ``Pure Mathematics in Norway" through the Trond Mohn Foundation and Tromsø Research Foundation.} 
\author[Johnsen]{Trygve Johnsen}
\address{Department of Mathematics and Statistics, 
 UiT-The Arctic University of Norway  \newline \indent 
N-9037 Troms{\o}, Norway}
\email{trygve.johnsen@uit.no}

\author[Pratihar]{Rakhi Pratihar}
\address{Department of Mathematics and Statistics, 
 UiT-The Arctic University of Norway  \newline \indent 
N-9037 Troms{\o}, Norway}
\email{rakhi.pratihar@uit.no}

\author[Verdure]{Hugues Verdure}
\address{Department of Mathematics and Statistics, 
 UiT-The Arctic University of Norway  \newline \indent 
N-9037 Troms{\o}, Norway}
\email{hugues.verdure@uit.no}

\keywords{Gabidulin rank-metric codes $\cdot$ $q$-Matriods $\cdot$ Classical matroids $\cdot$ Generalized rank weights $\cdot$ Weight spectra $\cdot$ Betti numbers}
\subjclass[2000]{05B35, 94B60, 15A03}
\date{\today}

\begin{abstract}
 We consider $q$-matroids and their associated classical matroids derived from Gabidulin rank-metric codes. We express the generalized rank weights of a Gabidulin rank-metric code in terms of Betti numbers of the dual classical matroid associated to the $q$-matroid corresponding to the code. In our main result, we show how these Betti numbers and their elongations determine the generalized weight polynomials for $q$-matorids, in particular, for the Gabidulin rank-metric codes. In addition, we demonstrate how the weight distribution and higher weight spectra of such codes can be determined directly from the associated $q$-matroids by using M\"{o}bius functions of its lattice of $q$-flats.
\end{abstract}

\maketitle

\section{Introduction} \label{intro}

 Rank-metric codes were first introduced by Delsarte \cite{Del} in 1978, and are referred to as \textit{Delsarte rank-metric codes}. These codes are defined as $\Fq$-subspaces of the space of $m \times n$-matrices over $\Fq$ (the field with $q$ elements, for $q$ a prime power) where the \textit{rank distance} between two codewords (i.e. matrices) is given by the rank of their difference. These codes are important for their applications in network coding, public-key cryptography, and distributed storage, topics that thus stimulate the interest in studying the intrinsic properties of these codes. In this work we are interested in a particular case of Delsarte rank-metric codes, widely known as \textit{Gabidulin rank-metric codes}, introduced independently by Gabidulin \cite{Gab85} in 1985 and Roth \cite{Rot91} in 1991. A \textit{Gabidulin rank-metric code} $\C$ of length $n$ and dimension $k$ may be defined as a $k$-dimensional subspace of the $n$-dimensional vector space $\Fqm^n$ over $\Fqm$. In this case, the rank-analogue of the Hamming metric is defined as follows. Fix a basis $B$ of $\F_{q^m}$ over $\Fq$ to interpret a vector $\underline{v} \in \Fqm^n$ as an $(m \times n)$-matrix $\A_{\underline{v}}$ over $\Fq$. Now the rank distance between two codewords $\underline{v}$ and $\underline{w}$ is defined as $d(\underline{v}, \underline{w}):= \rank(\A_{\underline{v}}-\A_{\underline{w}})$. In this paper we also consider the Gabidulin rank-metric code which is the set $\C_r$ of all $\FQt$-linear combination of words of $\C$, where $\tilde{Q}=Q^{r}$ for an arbitrary but fixed positive integer $r$, equipped with the rank metric. This code is referred to as the extension of $\C$ to $\FQt$ and denoted by $\C \otimes_{\FQ} \FQt$ (or $\tilde{\C}$, in short) analogous to the case of Hamming metric codes as considered in \cite{JRV16,JP13}. 
 
 For a linear block code $C$ over $\Fq$ and its extension $C \otimes_{\Fq} \FQ$ for some $Q = q^m$ with any positive integer $m$, it is shown in \cite{JP13} that the number $A_{C,j}(Q)$ of codewords of Hamming weight $j$ in the code $C \otimes_{\Fq} \FQ$ is a polynomial in $Q$. Later, in \cite{JRV16}, one generalized the weight polynomials to matroids. Following the work on the connection between generalized weights of a Hamming metric code $C$ and Betti numbers of certain associated matroid $M$ as established in \cite{JV2}, an expression for $A_{C,j}(Q)$ or more generally, for the generalized weight polymials $P_{M,j}(Q)$ of a matroid $M$, is provided in terms of Betti numbers associated to the Stanley-Reisner ring of the matroid $M$ and its elongations. On the other hand, recently a study on determining the singular homology of $q$-complexes associated to $q$-matroids has been initiated in \cite{GPR21}. This work is towards a topological approach to connect the generalized rank weights of a rank metric code with homological invariants of the associated $q$-matroid. This led us to study the $q$-matroids associated to Gabidulin rank metric codes and their lattices of cycles and flats with a view towards a combinatorial approach to express generalized rank weights and related polynomials in terms of homology of the associated $q$-matroids.
 
In this paper, we prove rank-analogues of some classical results for Gabidulin rank-metric codes. To each Gabidulin rank-metric code $\C$, we associate $q$-matroids $\MC$ and $\MM_{\C}^*$ as introduced in \cite{JP}; which can be viewed as $q$-analogues of matroids derived from generator matrices and parity check matrices, respectively, for block codes with the Hamming metric. To $\MC$ we furthermore associate a classical matroid $Cl(\MC)$ (also mentioned in \cite{JP}). We study resolutions of the Stanley-Reisner ring of the independence complexes of the dual classical matroid $Cl(\MC)^*$, and the possible elongations of that matroid. We show how the rank-weight distribution of the code $\C \otimes_{\FQ} \FQt$ can be determined by the $\mathbb{N}$-graded Betti numbers of the Stanley-Reisner ring corresponding to these resolutions. Furthermore, we generalize the results for arbitrary $q$-matroids by introducing generalized weight polynomials $P_{\MM,j}$ for a $q$-matroid $\MM$. 


Moreover, we show that the following three pieces of information are equivalent: The weight distribution $A_{\C,j}(Q^r)$ for all $j,r$, the higher weight spectra $A^{(i)}_{Q,j}$ of $\C$ for all $i,j$, and all the $\phi^{(l)}_{j}$ (certain alternating functions for $Cl(\MC)^*$ for all $i,l$). This result is a perfect analogue of a corresponding result for usual Hamming block codes and its associated matroids, see \cite[Cor. 17]{JV20}. In particular we like to emphasize how the higher rank-weight spectra (for subcodes of all dimensions) of $\C$ follow from the rank-weight distribution of $\tilde{C}$.




A key observation that plays an instrumental role throughout this article is the isomorphism between the lattice of $q$-flats of the $q$-matroid $\MC$ (resp. lattice of $q$-cycles of $\MM_{\C}^*$) and the lattice of flats of the matroid $Cl(\MC)$ (resp. lattice of cycles of $Cl(\MC)^*$). Moreover, it is well known (see for example \cite[p.57]{S} and \cite[p. 6]{JV21}) that all the Betti numbers of the Stanley-Reisner rings of $Cl(\MC)^*$ and its elongations can be given by concrete M{\"o}bius functions of the lattice of flats of $Cl(\MC)$ and its various truncations and sublattices. We use this fact to show that sometimes it is more convenient to work directly with the lattice of $q$-flats of $\MC$ or the opposite lattice, i.e., of $q$-cycles of $\MM_{\C}^*$. 

It is important to mention that alternative methods for determining weight distribution and higher weight spectra is given in \cite{CGLR,BCR} for the more general notion of Delsarte rank-metric codes. Thus we get a variation of ways to retrieve the triple set of information described above for Gabidulin rank-metric codes; one from resolutions of Stanley-Reisner rings, and one from a direct study of lattices in addition to techniques described by the authors in \cite{CGLR,BCR} using zeta functions.

The paper is organized as follows. In the next section, we collect some preliminaries and recall basic definitions regarding notions like ($q$-)matroids, $(q,m)$-polymatroids, and Stanley-Reisner rings associated to matroids.  
In Section \ref{sec3} we show how the generalized weights of Gabidulin rank-metric codes, more generally for $q$-matroids, can be expressed by invariants derived from the mentioned Stanley-Reisner rings. We consider the extended codes $\C \otimes_{\FQ} \FQt$ in Section \ref{sec4} and give expressions for their rank-weight distributions using the classical matroids associated to the $q$-matroids corresponding to the codes. Our main results (Theorems \ref{main} and \ref{equivalence} and Corollary \ref{follows}) determining generalized rank weights and higher weight spectra of $\C \otimes_{\FQ} \FQt$, and more generally, the weight polynomials for $q$-matroids in terms of Betti numbers, are derived in Section \ref{ncwq}. In Section \ref{examples} we demonstrate our main results with an example and also show how the weight distribution of uniform $q$-matroids can be determined directly from its lattice of $q$-flats. We end this article with a retrospective look in Section \ref{mbs} where we define a new concept called virtual Betti numbers for lattices satisfying the Jordan-Dedekind property.


\section{ Preliminaries} \label{defs}
Throughout this paper $q$ denotes a power of a prime number and $\Fq$ the finite
field with $q$ elements. We fix a positive integer $n$, and denote by $E$ the set $\{1,\ldots,n\}$ and by $\mathcal{E}$ the $n$-dimensional vector space $\Fqn$ over $\Fq$. By $\Sigma(\E)$ we denote the set of all subspaces of $\E$.
 We use $[j]$ to denote the quantity $ q^{n-1}+q^{n-2}+\cdots+q^{n-j}$
for any $j=1,\cdots,n$. We denote by $\mathbb{N}_0$ the set of all nonnegative integers, and by $\mathbb{N}$ the set of all positive integers.

 In this section, we recall basic definitions and results concerning matroids, their associated Stanley-Reisner rings, and their $q$-analogues, i.e., $(q,m)$-polymatroids and $q$-matroids.
\subsection{Matroids}
 There are many cryptomorphic definitions of a matroid \cite{Whi86}.  
Here we record the one in terms of rank function:

\begin{definition} \label{def:matroid} 
A matroid is a pair $(E,\r)$ where $E$ is a finite set and $\r: 2^E \rightarrow \mathbb{N}_0$ is a function satisfying: \begin{list}{}{\leftmargin=1em\topsep=1.5mm\itemsep=1mm}
\item[{\rm (R1)}] If $X \subseteq E$, then $0 \leqslant \r(X) \leqslant|X|$,
\item[{\rm (R2)}] If $X \subseteq Y \subseteq E$, then $\r(X) \leqslant \r(Y)$,
\item[{\rm (R3)}] If $X,Y$ are subsets of $E$, then \[\r(X \cap Y) + \r(X \cup Y) \leqslant \r(X)+\r(Y).\]
\end{list}
\end{definition}

The function $\r$ is called the rank function of the matroid. The rank of a matroid $M=(E,\r)$ is $\r(E)$. The nullity function $\nn$ of the matroid is given by $\nn(X) = |X|-\r(X)$ for $X \subseteq E$. By (R1), this is an integer-valued non-negative function on $2^E$.

 Next we recall the notions of flats and cycles of a matroid, which play a central role in this paper.

\begin{definition}\label{def:flat}
Let $M=(E,\r)$ be a matroid. A flat of the matroid is a subset $F \subseteq E$ satisfying \[ \ \r(F \cup \{x\}) = \r(F) +1, \text{ for all } x \in E\backslash F.\]
\end{definition}

From the definition, it follows easily that $E$ is a flat itself. Also, for $X \subseteq E$, the smallest flat containing $X$ is the set $Y=\{x \in E,\ \r(X \cup \{x\}) = \r(X)\}$ and moreover, $\r(Y)=\r(X)$. The intersection of two flats is a flat.

\begin{definition}\label{def:cycle} 
Let $M=(E,\r)$ be a matroid, and $\nn$ be its nullity function. For $0 \leqslant i \leqslant \nn(E)$, let \[\mathcal{N}_i =\{X \subseteq E,\ \nn(X)=i\}\] and let $N_i$ be the set of minimal elements of $\mathcal{N}_i$ with respect to inclusion. Then the elements of $N_i$ are called cycles of $M$ of nullity $i$. Cycles of nullity $1$ are called circuits.
\end{definition}

It is proved in \cite[Section 3]{JV2} that the cycles of a matroid are union of circuits, and of course, by definition, $\emptyset$ is a cycle (of nullity $0$).

\begin{definition}\label{def:dual_matroid}
Let $M=(E,\r)$ be a matroid. The dual matroid of $M$ is the matroid $M^*=(E,\r^*)$ with \[\r^*(X) := |X|+\r(E\setminus X)-\r(E).\]
\end{definition}

It is well known that there is a one to one correspondence between the flats of a matroid and the cycles of its dual, namely, $F$ is a flat of $M$ if and only if $E \setminus F$ is a cycle of $M^*$.

\begin{definition}\label{def:trunc} Let $M=(E,\r)$ be a matroid. The $l^{{\text{th}}}$ truncation of $M$ is the matroid $M_{(l)}=(E,\r_{(l)})$ where \[\r_{(l)}(X):= \min\{\r(M)-l, \r(X)\},\] 
for any subset $X \subseteq E$ and $0 \leq l \leq \r(M),$
and the $l^{{\text{th}}}$ elongation of $M$ is the matroid $M^{(l)}=(E,\r^{(l)})$, where 
 \[
 \r^{(l)}(X) := \min\{|X|, \r(X)+ l\}
  \] for any subset $X \subseteq E$ and $0 \leq l \leq n- \r(M)$. 
 \end{definition}

Note that $(M^*)^{(l)}=(M_{(l)})^*$, and 
$(M^*)_{(l)}=(M^{(l)})^*$, and that the flats of $M_{(l)}$ are exactly the flats of $M$ except those of rank $\r(M)-1,\cdots,\r(M)-l$, and that the cycles of $M^{(l)}$ are those of $M$ except those of nullity $1,2,\cdots, l$. { }

\subsection{Stanley-Reisner resolutions} \label{Stanley-Reisner}
Any matroid $M=(E,\r)$ gives rise to a simplicial complex $\Delta_M$ of independent sets, i.e., the faces of the complex are given by \[\mathcal{F} = \{ X \subseteq E,\ \r(X)=|X|\}.\] If $\mathbb{K}$ is a field, we can associate to the underlying simplicial complex a monomial ideal $I_M \subseteq S=\mathbb{K}[X_e, e \in E]$ defined by \[I_M = <X^\sigma: \sigma \not \in \mathcal{F}>,\] where $X^\sigma=\prod_{e \in \sigma}X_e$. We refer  to~\cite{Herzog11} for the study of such ideals. The Stanley-Reisner ring of the matroid is then the quotient $S_M=S/I_M$. This ring has minimal $\mathbb{N}^{|E|}$ and $\mathbb{N}$ graded free resolutions and as described in~\cite{JV2}, they are of the form  \[0 \leftarrow S_M \leftarrow S \leftarrow \bigoplus_{{\alpha \in N_1}}S(-\alpha)^{\beta_{1,\alpha}} \leftarrow \cdots \leftarrow \bigoplus_{\alpha \in N_{n-\r(M)}}S(-\alpha)^{\beta_{|E|-\r(M),\alpha}} \leftarrow 0\] and 
\[0 \leftarrow S_M \leftarrow S \leftarrow \bigoplus_{j \in \mathbf{N}}S(-j)^{\beta_{1,j}} \leftarrow \cdots \leftarrow \bigoplus_{j \in \mathbf{N}}S(-j)^{\beta_{|E|-\r(M),j}} \leftarrow 0.\] 
It is known, in particular, that the numbers $\beta_{i,\alpha}(M)=\beta_{i,\alpha}$  are independent of the minimal free resolution, and when the simplicial complex comes from a matroid (as in our case), also independent of the field $\mathbb{K}$. As one sees $\beta_{i,j}(M)=\beta_{i,j}=\Sigma_{|\alpha|=j}\beta_{i,\alpha},$ for all $i,j$.

Such Betti numbers, for matroids being specified later, will be instrumental for our main results. 
\begin{definition}
 For $i,j,l$ in question, given a matroid $M$, we let $\beta^{(l)}_{i,j}(M)$  be the $\mathbb{N}$-graded graded Betti number of the $l^{\text{th}}$ elongation matrix $M^{(l)}$.
 \end{definition}
 In particular, $\beta^{(0)}_{i,j}(M)=\beta_{i,j}(M) ($ If $M$ is fixed, we just write $\beta^{(l)}_{i,j}$ for $\beta^{(l)}_{i,j}(M)$.) 

\subsection{$q$-Matroids and $(q,m)$-polymatroids}
For $X\in \Sigma(\E)$, we denote by $X^{\perp}$ the dual of $X$ (with respect to the standard dot product), i.e., 
$X^{\perp} = \{\mathbf{x} \in \E: \mathbf{x}  \cdot \mathbf{y}  = 0 \text{ for all } \mathbf{y} \in X\}$. 
It is elementary and well-known that $X^{\perp}\in \Sigma(\E)$ with $\dim~ X^{\perp} = n - \dim~ X$ and $(X^{\perp})^{\perp} = X$, although $X\cap X^{\perp}$ need not be equal to $\{\mathbf{0}\}$, but of course $\E^\perp = \{\mathbf{0}\}$. 

The first part  of the  following key notion 
is due to \cite[Definition 2]{S}. 

\begin{definition}\label{FirstDef}

A $(q,m)$-{\em polymatroid} is an ordered pair $\mathcal{M}= (\E,\rho)$ consisting of
the vector space $\E=\mathbb{F}_q^n$ 
 and a function $\rho:\Sigma(\E)\to\mathbb{N}_0$
satisfying (P1)--(P3) below: 
\begin{list}{}{\leftmargin=1em\topsep=1.5mm\itemsep=1mm}
\item[{\rm (P1)}]  $0\leq \rho(X) \leq m\dim X$ for all $X \in \Sigma(\E)$;
\item[{\rm (P2)}] $\rho(X) \le \rho(Y)$ for all $X,Y \in \Sigma(\E)$ with $X \subseteq Y$;
\item[{\rm (P3)}] $\rho(X+Y)+\rho(X \cap Y) \le \rho(X)+\rho(Y)$, for all $X,Y \in \Sigma(\E)$.
\end{list}
\end{definition}

 The function $\rho$ is called the rank function of the $(q,m)$-polymatroid. The rank of a $(q,m)$-polymatroid $\mathcal{M}=(\E,\rho)$ is $rank(\mathcal{M} = \rho(\E)$. The nullity function $\eta$ of the $(q,m)$-polymatroid is given by $\eta(X) = m \dim_{\Fq} X-\rho(X)$ for $X \in \Sigma(\E)$. By (P1), this is an integer-valued non-negative function on $\Sigma(\E)$.

To us the following special case will be the most important one:
\begin{definition}
A $q$-matroid is a $(q,1)$-polymatroid.
\end{definition}
\begin{definition}\label{def:dual_qmatroid}
Let $\MM=(\E,\rho)$ be a $(q,m)$-polymatroid. The dual $(q,m)$-polymatroid  of $\MM$ is the $(q,m)$-polymatroid $\MM^*=(\E,\rho^*)$ with \[\rho^*(X) = m\dim X +\rho(X^{\perp})-\rho(\E).\]
The dual of a $q$-matroid is obtained from the case $m=1.$
\end{definition}
\begin{definition}
Let $\mathcal{M}=(\E,\rho)$ be a $q$-matroid. Then a subspace $F \subseteq \E$ is called a \textit{$q$-flat} if $\rho(F \oplus \langle e \rangle) > \rho(F)$ for all $e \in E \backslash F$.
\end{definition}

 \begin{definition}
Let $\mathcal{M}=(\E,\rho)$ be a $q$-matroid, and $\eta$ be its nullity function, i.e., $\eta(X) = \dim_{\Fq}X-\rho(X)$ for all $X \in \Sigma(\E)$. For $0 \leqslant i \leqslant \eta(\E)$, a subspace $X \subseteq \E$ of nullity $i$ is called a \emph{$q$-cycle} of $\mathcal{M}$ if $X$ is  minimal in $\mathcal{N}_i$ w.r.t. inclusion, where \[\mathcal{N}_i =\{X \in \Sigma(\E),\ \eta(X)=i\}.\] The $q$-cycles of nullity $1$ are called $q$-circuits.
\end{definition}
\begin{remark}
$(q,m)$-flats and $(q,m)$-cycles can be defined in analogous ways for 
$(q,m)$-polymatroids in general, but we will only treat the case $m=1$
in what follows.
\end{remark}

We then have:
\begin{lemma} \label{fundamental}
Let $\mathcal{M} = (\E,\rho)$ be a $q$-matroid. Then $X \in \Sigma(\E)$ is a $q$-flat (of rank $r$) of a $q$-matroid $\mathcal{N}$ if and only if its orthogonal complement $X^{\perp}$ is a $q$-cycle (of nullity $\rho(\mathcal{M})-r$) for $\mathcal{M}^*$.
\end{lemma}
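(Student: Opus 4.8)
The plan is to reduce everything to the explicit formula for the nullity function of the dual $q$-matroid $\mathcal{M}^*$ together with the inclusion-reversing involution $Y \mapsto Y^{\perp}$ on $\Sigma(\E)$. First I would record a convenient reformulation of the $q$-flat condition: by monotonicity (P2), the requirement $\rho(X \oplus \langle e \rangle) > \rho(X)$ for every $e \in \E \setminus X$ is equivalent to $\rho(Y) > \rho(X)$ for \emph{every} subspace $Y \supsetneq X$. Indeed, given such a $Y$, choose $e \in Y \setminus X$ and apply (P2) to $X \oplus \langle e \rangle \subseteq Y$ to get $\rho(Y) \geq \rho(X \oplus \langle e \rangle) > \rho(X)$; the converse is immediate since one-dimensional extensions are a special case.

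Next I would compute the nullity function $\eta^*$ of $\mathcal{M}^*$. From Definition \ref{def:dual_qmatroid} with $m=1$ and the identity $(X^{\perp})^{\perp} = X$, one has $\rho^*(Z) = \dim Z + \rho(Z^{\perp}) - \rho(\E)$, and therefore $\eta^*(Z) = \dim Z - \rho^*(Z) = \rho(\E) - \rho(Z^{\perp})$. Specializing to $Z = X^{\perp}$ gives $\eta^*(X^{\perp}) = \rho(\E) - \rho(X)$, so if $X$ has rank $r$ then $X^{\perp}$ has nullity $\rho(\mathcal{M}) - r$ in $\mathcal{M}^*$, and conversely. This already matches the numerical data claimed in the statement, independently of the extremal (flat/cycle) condition.

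It then remains to match the two defining extremal conditions. Since $W \mapsto W^{\perp}$ is an inclusion-reversing bijection of $\Sigma(\E)$, the proper subspaces $W \subsetneq X^{\perp}$ correspond bijectively to the strict superspaces $W^{\perp} \supsetneq X$. Using $\eta^*(W) = \rho(\E) - \rho(W^{\perp})$, the equality $\eta^*(W) = \rho(\E) - r$ is equivalent to $\rho(W^{\perp}) = r = \rho(X)$; and by (P2), for $W^{\perp} \supseteq X$ we always have $\rho(W^{\perp}) \geq r$, so $\rho(W^{\perp}) = r$ fails precisely when $\rho(W^{\perp}) > r$. Hence the minimality condition defining a $q$-cycle, namely that $X^{\perp}$ contain no proper subspace $W$ with $\eta^*(W) = \rho(\E) - r$, translates exactly into the statement that $\rho(Y) > \rho(X)$ for every $Y \supsetneq X$, which by the first step is the $q$-flat condition on $X$. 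Reading this chain of equivalences in both directions yields the stated biconditional.

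The main point requiring care, rather than a genuine obstacle, is the bookkeeping between the two opposite extremal conditions: a $q$-flat is pinned down by a closure-type condition (enlarging $X$ always raises the rank), whereas a $q$-cycle is pinned down by a minimality condition (no proper subspace attains the given nullity). One must verify that the order-reversing involution $Y \mapsto Y^{\perp}$ interchanges ``superspace of strictly larger rank'' with ``proper subspace of strictly smaller nullity'' with no off-by-one error, which is exactly what the formula $\eta^*(W) = \rho(\E) - \rho(W^{\perp})$ and the monotonicity (P2) guarantee. I would also note that the symbol $\mathcal{N}$ appearing in the statement should be read as $\mathcal{M}$, so that the $q$-flat and its associated $q$-cycle refer to the same $q$-matroid and its dual.
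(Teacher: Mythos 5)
Your proof is correct and follows essentially the same route as the paper's: both rest on the identity $\eta^*(Z) = \rho(\E) - \rho(Z^{\perp})$ (equivalently $\rho^*(Z)=\dim Z+\rho(Z^{\perp})-\rho(\E)$) together with the inclusion-reversing involution $W \mapsto W^{\perp}$. If anything, your version is the more complete one, since you match the flat condition against minimality over \emph{all} proper subspaces of $X^{\perp}$, whereas the paper only compares one-dimensional extensions of $X$ with codimension-one subspaces of $X^{\perp}$ and leaves implicit the (easy, monotonicity-based) reduction of minimality to that case; your reading of $\mathcal{N}$ as a typo for $\mathcal{M}$ is also the intended one.
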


\begin{proof}  Let $X$ be a $q$-flat of a $q$-matroid $\mathcal{M}$ of rank $r$. From the definition of $q$-cycle, any subspace strictly contained in a $q$-cycle $A$ have nullity strictly less the nullity of $A$, which we assume to be $a$. 

Then from the identity, $ \rho^*(X) = \dim~ X + \rho(X^\perp) - \rho(\E)$, it is clear that if any space of the form $X\oplus \langle v\rangle$ has rank strictly greater than that of $X$, the nullity of any subspace $B$ of $X^\perp$ of codimension $1$ should be strictly one less than that of $X^\perp$. So if $X$ is a $q$-flat, then  $X^\perp$ is a $q$-cycle.
Similar logic proves the converse also.

Since $\eta^*(X^\perp) = \dim~ X^\perp - \dim~ X^\perp  - \rho (X) + \rho(\E) = \rho(\mathcal{M}) - \rho(X)$, it shows that for a $q$-flat $X$ of rank $r$ its orthogonal complement $X^\perp$ is a $q$-cycle of $\mathcal{M}^*$ of nullity $\rho(\mathcal{M}) - r$. 
\end{proof}
We now introduce an important definition, essentially taken from \cite{JP}:
\begin{definition} \label{class}
 For every $q$-matroid $\mathcal{\mathcal{M}}=(\E,\rho)$, we  associate a pair 
$$Cl(\mathcal{M})=(P(\E),\r_{\rho}),$$ 
where $P(\E)$ is the set of all $1$-dimensional subspaces of $\E=\Fq^n$, in other words, $P(\E)$ is the projectivization of $\E$ with $ |P(\E)| = \frac{q^n-1}{q-1}=q^{n-1}+q^{n-2}+\cdots+1.$ Moreover, for $S \subseteq P(\E)$, we set:
$$\r_{\rho}(S):= \rho(\langle S \rangle),$$ 
where $\langle S \rangle$ is the (affine) subspace in $\E=\Fq^n$, spanned by the $1$-dimensional subspaces of $\E$ in $S$.
\end{definition} 
\begin{proposition} \label{matclass}
For every $q$-matriod $\mathcal{M}=(\E,\rho)$, the pair    
$Cl(\mathcal{M})=(P(\E),\r_{\rho})$ is a matroid.
\end{proposition}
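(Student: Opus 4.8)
The plan is to verify directly that $\r_{\rho}$ satisfies the three rank axioms (R1)--(R3) of Definition \ref{def:matroid}, taking as ground set the finite set $P(\E)$ (which has $\frac{q^n-1}{q-1}$ elements). Throughout I would exploit two elementary facts about spans of collections of $1$-dimensional subspaces: for $S,T \subseteq P(\E)$ one has the equality
\[ \langle S \cup T \rangle = \langle S \rangle + \langle T \rangle, \]
whereas for intersections only a containment holds in general, namely
\[ \langle S \cap T \rangle \subseteq \langle S \rangle \cap \langle T \rangle, \]
since a vector lying in both $\langle S \rangle$ and $\langle T \rangle$ need not be expressible using lines belonging to both $S$ and $T$.

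For (R1), since $\rho$ takes values in $\mathbb{N}_0$ we obtain $\r_{\rho}(S)=\rho(\langle S \rangle)\geq 0$ at once; for the upper bound, I would observe that choosing one nonzero vector from each line of $S$ exhibits $\langle S \rangle$ as spanned by $|S|$ vectors, so $\dim \langle S \rangle \leq |S|$, and then invoke (P1) in the case $m=1$ to conclude $\r_{\rho}(S)=\rho(\langle S \rangle)\leq \dim \langle S \rangle \leq |S|$. Axiom (R2) is immediate: if $S \subseteq T$ then $\langle S \rangle \subseteq \langle T \rangle$, so monotonicity (P2) gives $\rho(\langle S \rangle)\leq \rho(\langle T \rangle)$, i.e. $\r_{\rho}(S)\leq \r_{\rho}(T)$.

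The substantive point is the submodular inequality (R3). Starting from (P3) applied to the subspaces $\langle S \rangle$ and $\langle T \rangle$,
\[ \rho(\langle S \rangle + \langle T \rangle)+\rho(\langle S \rangle \cap \langle T \rangle)\leq \rho(\langle S \rangle)+\rho(\langle T \rangle), \]
I would rewrite the first term on the left using $\langle S \cup T \rangle=\langle S \rangle+\langle T \rangle$, and bound the second term from below via the containment $\langle S \cap T \rangle \subseteq \langle S \rangle \cap \langle T \rangle$ together with monotonicity (P2), which yields $\rho(\langle S \cap T \rangle)\leq \rho(\langle S \rangle \cap \langle T \rangle)$. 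Chaining these gives
\[ \r_{\rho}(S \cap T)+\r_{\rho}(S \cup T)=\rho(\langle S \cap T \rangle)+\rho(\langle S \cup T \rangle)\leq \rho(\langle S \rangle)+\rho(\langle T \rangle)=\r_{\rho}(S)+\r_{\rho}(T), \]
which is exactly (R3). The only place where the argument is not a verbatim transcription of (P1)--(P3) is that the span of the intersection is in general strictly smaller than the intersection of the spans, so the monotonicity step must be inserted rather than hoping for equality; this is the main, though mild, obstacle.
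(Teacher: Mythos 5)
Your proof is correct and follows the same overall strategy as the paper's: verify (R1)--(R3) for $\r_{\rho}$ directly from (P1)--(P3) applied to spans. In fact, on the one substantive point your argument is \emph{more} careful than the paper's. The paper's proof of (R3) asserts the equality $\langle X \cap Y \rangle = \langle X \rangle \cap \langle Y \rangle$, which is false in general: e.g.\ for $X=\{\langle e_1\rangle,\langle e_2\rangle\}$ and $Y=\{\langle e_1+e_2\rangle\}$ one has $\langle X\cap Y\rangle=0$ while $\langle X\rangle\cap\langle Y\rangle=\langle e_1+e_2\rangle$. Only the containment $\langle X \cap Y \rangle \subseteq \langle X \rangle \cap \langle Y \rangle$ holds, and the correct way to close the gap is exactly what you do: apply monotonicity (P2) to get $\rho(\langle X\cap Y\rangle)\le\rho(\langle X\rangle\cap\langle Y\rangle)$ before invoking (P3). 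Since the inequality in (R3) points the right way, the paper's conclusion survives, but your version is the one that should be written down.
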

\begin{proof}
Properties (R1) and (R2) follow directly from the properties (P1) and (P2) of Definition \ref{FirstDef}, respectively. To prove $(R3)$, we take $X,~Y \subseteq P(\E)$ and verify:
\begin{align*}
\r_{\rho}(X \cap Y)+\r_{\rho}(X \cup Y)&=\rho(\langle X \cap Y \rangle + \rho(\langle X \cup Y  \rangle)\\
                                     &=\rho(\langle X \rangle \cap \langle Y \rangle)+\rho(\langle X\rangle+ \langle Y \rangle)\\
                                     &\le \rho(\langle X \rangle) +\rho(\langle Y \rangle) \qquad (\text{by P3})\\
                                     &\le \r_{\rho}(X)+\r_{\rho}(Y).
\end{align*} 
\end{proof}

Furthermore, we observe:
\begin{lemma} \label{transfer}
 Let $\mathcal{M} = (\E,\rho)$ be a $q$-matroid and $Cl(\mathcal{M}) = (P(\E), \r_{\rho})$ be the classical matroid associated to $\mathcal{M}$. Then $S \subseteq P(\E)$ is a flat with rank $\r(S)=r$ for $Cl(\mathcal{M})$ if and only if it is of the form $P(F)$ for some $q$-flat $F$ of $\mathcal{M}$ of $q$-rank $\rho(F)=r$.
\end{lemma}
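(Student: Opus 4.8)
The plan is to move back and forth between a subspace and its set of projective points using two elementary identities: $\langle P(V)\rangle = V$ for every subspace $V\subseteq\E$, and $\r_{\rho}(S)=\rho(\langle S\rangle)$ for every $S\subseteq P(\E)$ (the latter being exactly Definition \ref{class}). A preliminary observation I would record first is that adjoining a single element raises the rank by at most one in both worlds: on the classical side this is the standard matroid fact coming from (R1)--(R3) (and $Cl(\mathcal{M})$ is a matroid by Proposition \ref{matclass}), while on the $q$-side $\rho(F\oplus\langle e\rangle)\le\rho(F)+1$ follows from the submodular inequality (P3) together with $\rho(\langle e\rangle)\le 1$, which is (P1) with $m=1$. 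Hence in both settings ``the rank strictly increases'' is the same as ``the rank increases by exactly one,'' which is precisely what makes the $q$-flat condition and the flat condition of Definition \ref{def:flat} line up.

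For the direction starting from a $q$-flat $F$ of $q$-rank $r$, I would first note $\r_{\rho}(P(F))=\rho(\langle P(F)\rangle)=\rho(F)=r$, so $P(F)$ has the correct rank. Then for any point $x=\langle e\rangle\in P(\E)\setminus P(F)$ we have $e\notin F$, whence $\langle P(F)\cup\{x\}\rangle=F\oplus\langle e\rangle$ and $\r_{\rho}(P(F)\cup\{x\})=\rho(F\oplus\langle e\rangle)$. The $q$-flat hypothesis gives $\rho(F\oplus\langle e\rangle)>\rho(F)$, and by the preliminary observation this means $\r_{\rho}(P(F)\cup\{x\})=\r_{\rho}(P(F))+1$, so $P(F)$ is a flat of $Cl(\mathcal{M})$ of rank $r$.

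For the converse, given a flat $S$ of rank $r$ I would set $F:=\langle S\rangle$, so that $\rho(F)=\r_{\rho}(S)=r$ is immediate. It then remains to prove $S=P(F)$ and that $F$ is a $q$-flat. The inclusion $S\subseteq P(F)$ holds by definition of $F$. The heart of the argument, and the one place where the flat hypothesis is genuinely used, is the reverse inclusion: if some $x=\langle e\rangle\in P(F)\setminus S$ existed, then $e\in F$ would force $\langle S\cup\{x\}\rangle=F$, hence $\r_{\rho}(S\cup\{x\})=\rho(F)=\r_{\rho}(S)$, contradicting that $S$ is a flat (a point outside a flat must raise the rank). Thus $S=P(F)$. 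Finally, for the $q$-flat property I would take any $e\notin F$; then $x=\langle e\rangle\notin S$, so the flat condition yields $\r_{\rho}(S\cup\{x\})=\r_{\rho}(S)+1$, i.e. $\rho(F\oplus\langle e\rangle)=\rho(F)+1>\rho(F)$, as required.

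The main obstacle I anticipate is essentially bookkeeping rather than any deep input: one must make sure that ``adjoin a single point'' on the classical side corresponds exactly to ``adjoin a line'' on the $q$-side, which forces a careful split into the cases $e\in F$ and $e\notin F$ and a repeated use of $\langle P(F)\rangle=F$. Notably, the submodular inequalities (P3)/(R3) enter only through the ``rank rises by at most one'' bound; everything else is driven by (P1), (P2) and the definition of $\r_{\rho}$.
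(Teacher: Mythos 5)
Your proof is correct and follows essentially the same route as the paper's: both directions are handled by translating between $S$ and $\langle S\rangle$ via the identities $\langle P(V)\rangle=V$ and $\r_{\rho}(S)=\rho(\langle S\rangle)$, with your converse being the direct form of the paper's contrapositive case split ($S=P(G)$ for $G$ not a $q$-flat, versus $S$ not of the form $P(G)$ at all). Your explicit remark that submodularity forces the rank to rise by at most one when a point is adjoined — so that ``strictly increases'' coincides with ``increases by exactly one'' in Definition \ref{def:flat} — is a small point the paper leaves implicit, and it is a welcome addition.
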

\begin{proof}
If $X=P(F)$ for  some $q$-flat $F$, and $y \in P(\E)-X$,
then $\r(X \cup \{y\}) = \rho(F \oplus \langle y \rangle)>\rho(F)=\r(X),$ 
so $X$ is a flat.

If $X$ is not of the form $P(F)$ for $F$ a $q$-flat, then  $X=P(G)$ for $G$ not a $q$-flat, or $X$ is not of the form $P(H)$ for any $G \in \Sigma(\E)$.
If $X=P(G)$ for $G$ not a $q$-flat, then there exists $e$ such that $\rho(G \oplus \langle e \rangle)=\rho(G)$.
Then $\r(P(G \oplus \langle e \rangle))=\rho(G \oplus \langle e \rangle)=\rho(G)=\r(X)$, so $X$ is not a flat.

If $X$ is not of the form $P(G)$ for any $G\in \Sigma(\E)$, then $X$ is strictly contained in $Y=Span(X)$ in $P(\E)$. Hence: $\r(X)=\rho(\langle X \rangle)=\rho(\langle Y \rangle)=\r(Y)$, and so $X$ is not a flat.
\end{proof}

\begin{definition}\label{def:interval}Let $(E,\mathcal{R})$ be a poset. The  opposite of a poset $(E,\mathcal{R})$ is the poset $(E,\mathcal{S})$ where $x \mathcal{S} y \Leftrightarrow y \mathcal{R} x.$ 

\end{definition}


\begin{definition}\label{def:chain} 
\begin{enumerate}
\item [\rm{(i)}] Let $(E,\mathcal{R})$ be a finite poset. A chain $C$ in $E$ is a totally ordered subset of $E$ (meaning $a \mathcal{R} b$ or $b \mathcal{R} a$ for $a,b$ in $C$). 

\item [\rm{(ii)}] The length of a chain is equal to the cardinality of the chain minus $1$. The length of a finite poset is the maximal length of chains in the poset. 

\item [\rm{(iii)}] If the poset has the Jordan-Dedekind property
(meaning: all maximal chains have the same length), then the rank of an element $x \in E$ is the length of the poset $([0,x],\mathcal{R})$.
\end{enumerate}
\end{definition}

 \begin{definition}\label{def:lattice} A finite lattice is a finite poset $P=(E,\mathcal{R})$, where there exists a maximal element, denoted by $1$, a minimal element, denoted by $0$, and for any two elements $a,b \in E$, there exists a least upper bound (or join) $a \vee b$ and a greatest lower bound (or meet) $a \wedge b$. An atom is a minimal element of the subset $E\backslash\{0\}$.
\end{definition}


The opposite lattice $P^*$ of a lattice $P$ satisfies $0_{P^*}=1_P$, $1_{P^*}=0_P$, $a \vee_{P^*} b = a \wedge_P b$ and $a \wedge_{P^*} b = a \vee_P b$.

Let $M$ be a matroid on the ground set $E$. It is well known that the set of flats of $M$ is a lattice, where the order is the inclusion order.
Moreover, it is well known that this lattice has the Jordan-Dedekind property, and therefore has a well-defined rank function. The minimal element of the lattice is the closure of $\emptyset$, its maximal element is  $E$, while the meet of two flats is their intersection, and the join is the closure of their union. 
\begin{definition}
A geometric lattice is a finite lattice having the Jordan-Dedekind property, and where its rank function, say $r$, satisfies:
\begin{itemize}
    \item It is atomistic, i.e., every element is a supremum of a set of atoms.
    \item It is semimodular, i.e.,$r(x \wedge y) + r( x \vee y) \le r(x) + r(y).$
\end{itemize}
\end{definition}
It is well known that the flats of a matroid constitute a geometric lattice, where the rank function of the lattice corresponds to the rank function of the matroid, for each flat.

Lemma \ref{transfer} has the following immediate, probably well known, consequence:
\begin{proposition} \label{compare2}
\begin{itemize}
\item [{\rm (a)}] As posets ordered by inclusion, the set of flats of $Cl(\mathcal{M})$, and the set of $q$-flats of $\mathcal{M}$ are isomorphic.
\item [{\rm (b)}] The posets of cycles of $Cl(\mathcal{M})^*$ and $q$-cycles of 
$\mathcal{M}^*$ are isomorphic.
\item [{\rm (c)}] The poset of $q$-flats of $\mathcal{M}$ constitute a geometric lattice.
\end{itemize}
\end{proposition}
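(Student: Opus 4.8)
The plan is to prove all three parts as essentially immediate corollaries of Lemma~\ref{transfer}, the duality relation of Lemma~\ref{fundamental}, and the two standard lattice facts recalled just before the statement (that the flats of a matroid form a geometric lattice, and that flats of $M$ correspond bijectively to cycles of $M^*$ via complementation).

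For part (a), Lemma~\ref{transfer} already establishes a bijection: every flat of $Cl(\mathcal{M})$ is of the form $P(F)$ for a unique $q$-flat $F$ of $\mathcal{M}$, and conversely. The only thing left is to check that this bijection $F \mapsto P(F)$ is order-preserving in both directions. This is routine: for $q$-flats $F,G$ we have $F \subseteq G$ in $\Sigma(\E)$ if and only if $P(F) \subseteq P(G)$ in $P(\E)$, since a subspace is determined by the set of its $1$-dimensional subspaces and $\langle P(F)\rangle = F$. Hence the two posets, both ordered by inclusion, are isomorphic.

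For part (b), I would apply part (a) to the dual objects. On the classical side, the cycles of $Cl(\mathcal{M})^*$ are in inclusion-reversing bijection with the flats of $Cl(\mathcal{M})$ via $F \mapsto P(\E)\setminus F$, by the well-known flat-cycle duality recalled after Definition~\ref{def:dual_matroid}. On the $q$-side, Lemma~\ref{fundamental} gives an inclusion-reversing bijection between $q$-flats of $\mathcal{M}$ and $q$-cycles of $\mathcal{M}^*$ via $F \mapsto F^{\perp}$ (since $\perp$ reverses inclusion and $\dim X^\perp = n-\dim X$). Combining these with the isomorphism of part (a) yields the desired isomorphism of the two cycle posets; the point to verify is only that the composite respects order, which follows because each constituent map reverses inclusion and two inclusion-reversals compose to an inclusion-preserving map. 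Alternatively, one may simply invoke that the poset of cycles of $Cl(\mathcal{M})^*$ is the opposite of the poset of flats of $Cl(\mathcal{M})$ and similarly on the $q$-side, and then quote part (a) together with the observation that taking opposites of isomorphic posets yields isomorphic posets.

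For part (c), by part (a) the poset of $q$-flats of $\mathcal{M}$ is isomorphic, as a poset, to the poset of flats of $Cl(\mathcal{M})$. Since $Cl(\mathcal{M})$ is a genuine matroid by Proposition~\ref{matclass}, its lattice of flats is a geometric lattice by the standard fact recalled immediately before the proposition. A poset isomorphic to a geometric lattice is itself a geometric lattice (the isomorphism transports the lattice operations, the Jordan-Dedekind property, the rank function, and the atomistic and semimodular conditions), so the poset of $q$-flats of $\mathcal{M}$ is a geometric lattice. I do not anticipate a serious obstacle anywhere; the main point requiring a sentence of care is confirming that the bijections of Lemmas~\ref{transfer} and~\ref{fundamental} are order-(anti)isomorphisms rather than mere bijections of sets, but this is immediate from the fact that $\perp$ and $P(\cdot)$ each interact with inclusion in the expected way.
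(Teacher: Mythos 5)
Your proof is correct and takes essentially the same route as the paper's: part (a) is the bijection of Lemma~\ref{transfer} together with the (routine) check that it preserves inclusion both ways, part (b) follows by passing to opposite posets via the flat--cycle dualities, and part (c) transports the geometric-lattice structure from the lattice of flats of $Cl(\mathcal{M})$. The paper's own proof is just a terser version of exactly this argument.
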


\begin{proof}
\begin{itemize}
\item [{\rm (a)}] From Lemma \ref{transfer} there is a bijection between the flats of $Cl(\mathcal{M})$ and the flats of $\mathcal{M}$. Since it is  inclusion-preserving, the lattices are isomorphic.
\item [{\rm (b)}] This holds since by (a) their opposite lattices are isomorphic.
\item [{\rm (c)}] This holds by (a), since the flats of $Cl(\mathcal{M})$ are well known to do so.
\end{itemize}
\end{proof}
\section{Generalized  Weights of $q$-matroids and Betti numbers}
\label{sec3}

 In this section, we consider rank-metric codes and their corresponding $(q,m)$-polymatroids and $q$-matroids. The main result of this section gives a relation between generalized rank weights of Gabidulin rank-metric codes with Betti numbers of certain classical matroids associated to the $q$-matroids corresponding to the codes.

Let $\Mat$, or simply  $\M$, denote the space of all $m\times n $ matrices with entries in the finite field $\Fq$. Note that $\M$ is a vector space over $\Fq$ of dimension $mn$. 

{\begin{definition} \label{found}
\begin{itemize}
\item [\rm{(a)}] By a \emph{Delsarte rank-metric code},  or simply, a \emph{Delsarte code}, we mean a 
$\Fq$-linear subspace of $\M$.  
\item [\rm{(b)}] By a \emph{Gabidulin rank-metric code}, or a \emph{vector rank-metric code}, we mean a $k$-dimensional $\F_{q^m}$-linear space $\C$ in $\F_{q^m}^n$.

\end{itemize}
\end{definition}}
A Gabidulin rank-metric code $\C$ is also then  a $K=km$-dimensional linear code over $\Fq$.
This can be viewed as a special case of a Delsarte code in the following way: Fix once and for all an $\Fq$-basis $B=\{b_1,\cdots,b_m\}$ of $\F_{q^m}$. Therefore, any element $a \in \F_{q^m}$ can be uniquely written as $a_1b_1+\cdots a_mb_m$ and represented by a column vector $(a_1,\cdots,a_m)^t$. In a similar way, any codeword $c = (w_1,\cdots,w_n)\in \F_{q^m}^n$ can be represented as an $m \times n$-matrix
$M_B(c)=\begin{pmatrix} w_{1,1} & \cdots & \cdots& w_{n,1} \\  
                \cdots&  & \cdots &\cdots \cdots\\ 
                \cdots&  & \cdots &\cdots \cdots\\
                w_{1,m} & \cdots & \cdots& w_{m,n}
\end{pmatrix}$.

  Following Shiromoto \cite{S}, we associate to a Delsarte code (resp. Gabidulin rank-metric code) $\C$, a family $\{\C(X):X \in \Sigma(\E)\}$ of subcodes of $\C$, and a $(q,m)$-polymatroid (resp. $q$-matroid) as follows.


\begin{definition}\label{CXrhoC}
\begin{enumerate}
\item[{\rm (a)}] Let $\C \subseteq \Mat$ be a Delsarte code. Given any $X\in \Sigma(\E)$, $\C(X)$ is defined to be the subspace of $\C$ consisting of all matrices in $\C$ with row spaces contained in $X$. Let $\rho^{1}_\C$ : $\Sigma(\E) \longrightarrow \mathbb{N}_0$ be the function defined as 
$$ \rho^{1 }_\C(X)=\dim_{\mathbb{F}_q}\C-\dim_{\mathbb{F}_q}\C(X^{\perp}) \quad \text{ for } X \in \Sigma(\E).
$$ 
 

\item [{\rm (b)}] Let $\C$ be a Gabidulin rank-metric code over $\Fqm/\Fq$. With respect to the fixed $\Fq$-basis $B$ of $\Fqm$, we consider the respresentation of codewords by matrices. Then for any $X \in \Sigma(\E)$, $\C(X)$ is defined similar to the case of Delsarte codes. Let $\rho_\C$ : $\Sigma(\E) \longrightarrow \mathbb{N}_0$ be the function defined as 
$$\rho_{\C}(X)=\dim_{\mathbb{F}_{q^m}}\C-\dim_{\mathbb{F}_{q^m}}\C(X^{\perp}).$$  
\end{enumerate}
\end{definition}  

\begin{proposition}\label{q-mat}
The pair $(\E,\rho^{1}_\C)$ is a $(q,m)$-polymatroid,
and the pair $(\E,\rho_{\C})$ is a $q$-matroid.
\end{proposition}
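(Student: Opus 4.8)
The plan is to verify the three axioms (P1)--(P3) of Definition \ref{FirstDef} for $\rho^{1}_\C$ (arbitrary $m$, using $\Fq$-dimensions) and for $\rho_\C$ (the case $m=1$, using $\Fqm$-dimensions), treating both uniformly. The engine of the proof is a short list of structural properties of the assignment $X\mapsto\C(X)$, which I would isolate first. Writing $F(X):=\C(X)$, I claim that (i) $F$ is inclusion-preserving, (ii) $F(U\cap V)=F(U)\cap F(V)$, and (iii) $F(U)+F(V)\subseteq F(U+V)$. Property (i) is immediate; (ii) holds because a matrix has row space contained in $U\cap V$ exactly when its row space lies in both $U$ and $V$; and (iii) holds because the row space of a sum of matrices is contained in the sum of their row spaces. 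In the Gabidulin case I must also check that $\C(X)$ is genuinely an $\Fqm$-subspace, so that $\dim_{\Fqm}$ is meaningful: using $M_B(\lambda c)=T_\lambda M_B(c)$ for the $\Fq$-matrix $T_\lambda$ of multiplication by $\lambda\in\Fqm$, the rows of $M_B(\lambda c)$ are $\Fq$-combinations of those of $M_B(c)$, so the row space can only shrink and the containment in $X$ is preserved.

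From (ii), (iii) and the modular law for subspaces I would deduce the supermodularity of $\dim F$:
\[
\dim F(U\cap V) + \dim F(U+V)\ \ge\ \dim\bigl(F(U)\cap F(V)\bigr) + \dim\bigl(F(U)+F(V)\bigr) = \dim F(U)+\dim F(V),
\]
valid verbatim for both the $\Fq$- and the $\Fqm$-dimension. This single inequality is the heart of the submodularity axiom (P3).

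Next I translate these facts about $F$ into the required properties of $\rho$, using only that $\perp$ is an inclusion-reversing involution with $(X+Y)^\perp=X^\perp\cap Y^\perp$ and $(X\cap Y)^\perp=X^\perp+Y^\perp$. For (P2): if $X\subseteq Y$ then $Y^\perp\subseteq X^\perp$, so $\dim F(Y^\perp)\le\dim F(X^\perp)$ by (i), whence $\rho(X)\le\rho(Y)$. For (P3): substituting $U=X^\perp$, $V=Y^\perp$ into the displayed supermodularity and applying the two De Morgan identities for $\perp$ turns it, after subtracting both sides from $2\dim\C$, precisely into $\rho(X+Y)+\rho(X\cap Y)\le\rho(X)+\rho(Y)$. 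The lower bound $\rho(X)\ge 0$ in (P1) is clear since $\C(X^\perp)\subseteq\C$.

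The one step that is not purely formal, and which I expect to be the main point, is the upper bound in (P1), namely $\rho(X)\le m\dim X$ (resp. $\rho_\C(X)\le\dim X$). Here I would exhibit $\rho(X)$ as the dimension of an honest image. Take the quotient map $p_{X^\perp}\colon\E\to\E/X^\perp$ and apply it row-wise: in the Delsarte case this sends $\M_{m\times n}(\Fq)$ to $\M_{m\times\dim X}(\Fq)$, and in the Gabidulin case, writing a codeword as $c=\sum_{i=1}^m b_i r_i$ with $r_i\in\E$, the $\Fqm$-linear extension $P:=p_{X^\perp}\otimes\mathrm{id}$ sends $\Fqm^n=\E\otimes_{\Fq}\Fqm$ to $(\E/X^\perp)\otimes_{\Fq}\Fqm\cong\Fqm^{\dim X}$. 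In either case the restriction of this map to $\C$ has kernel exactly $\C(X^\perp)$, because $P(c)=0$ forces every $r_i\in X^\perp$, i.e.\ the row space lies in $X^\perp$; hence by rank--nullity $\rho(X)=\dim\C-\dim\C(X^\perp)$ equals the dimension of the image, which is bounded by $m\dim X$ (resp.\ $\dim X$). This establishes (P1) and finishes both assertions, the $q$-matroid statement being the specialization $m=1$ with $\Fqm$-coefficients.
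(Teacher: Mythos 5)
Your proof is correct, but it takes a genuinely different route from the paper's. The paper disposes of the proposition in one line: the first statement (that $(\E,\rho^{1}_\C)$ is a $(q,m)$-polymatroid) is quoted from Shiromoto \cite{S}, and the second then ``follows automatically'' from the identity $\rho_{\C}(X)=\rho^{1}_\C(X)/m$, which converts the $(q,m)$-polymatroid axioms into the $(q,1)$-axioms by dividing through by $m$. You instead verify (P1)--(P3) from scratch, and your argument is sound at every step: the three structural properties of $X\mapsto\C(X)$ (monotone, meet-preserving, join-subadditive) combined with the modular law do yield the supermodularity of $\dim\C(\cdot)$, the De Morgan identities for $\perp$ correctly transport this to submodularity of $\rho$, the observation $M_B(\lambda c)=T_\lambda M_B(c)$ correctly justifies that $\C(X)$ is $\Fqm$-linear in the Gabidulin case, and the row-wise quotient map with kernel exactly $\C(X^{\perp})$ is a clean way to get the upper bound in (P1) via rank--nullity. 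What your approach buys is a self-contained proof that makes visible exactly which features of the support map drive each axiom; what the paper's approach buys is brevity plus the explicit scaling relation $\rho_\C=\rho^1_\C/m$, a small but useful fact (it ties the $q$-matroid to the $(q,m)$-polymatroid of the same code) that your uniform treatment of the two cases never records. If you wanted to align with the paper you could prove only the Delsarte case and then derive the Gabidulin case from that identity, but as written your argument is complete and needs no repair.
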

\begin{proof}
The first statement is proved in \cite{S}, and the second one then follows automatically, since  $\rho_{\C}(X)=\frac{\rho^1_\C(X)}{m}$.
\end{proof}
It is important to note that $q$-matroids associated to Gabidulin rank metric codes were first defined in \cite[Definition 22 and Theorem 24]{JP2017} and the $q$-matroid in \cite{JP2017} is same as $(\E,\rho_{\C})$ in Proposition \ref{q-mat}.           
\begin{remark}\label{nuCstar}
{\rm
 We note that the nullity function $(\eta_{\C}^1)^* $ of the dual $(q,m)$-polymatroid of $(\E,\rho_C^1)$ satisfies:
\begin{equation}\label{nu1C}
(\eta_{\C}^1)^* (X) = \dim_{F_{q}}\C(X) \quad \text{for } X \in \Sigma(\E).
\end{equation}
for a Delsarte code, and
that the nullity function $(\eta_\C)^*$ of the dual of the $q$-matroid $(\E,\rho_{\C})$ then is given by 
\begin{equation}\label{nuC}
(\eta_\C)^* (X) = \dim_{F_{q^m}}\C(X) \quad \text{for } X \in \Sigma(\E).
\end{equation}
for a Gabidulin rank-metric code.}
\end{remark}







\begin{definition}
Let $\C \subseteq \Fqm^n$ be a Gabidulin rank-metric code of length $n$ and dimension $k$ over the extension $\Fqm/\Fq$. Let $d$ be the minimum rank distance of $\C$. If $d = n-k+1$, then $\C$ is called a maximum rank distance (MRD) code. 
\end{definition}

Next we recall the very first construction of MRD codes, independently given by Gabidulin \cite{Gab85} and Delsarte \cite{Del}. 
\begin{example}\label{MRDexam}
Let $m,n,k$ be positive integers such that $m \ge n \ge k$. If we consider $\mathbf{a} = (a_1, \ldots, a_n) \in \Fqmn$ such that $\{a_i : i = 1, \ldots, n \}$ is a linearly independent set over $\Fq$, then $\Fqm$-linear code generated by the following matrix $\mathbf{G}$ is an MRD code of length $n$ and dimension $k$. 
	\[
	\mathbf{G} := 
	\begin{pmatrix}
	a_1 & a_2 & \hdots & a_n \\
	a_1^q & a_2^q & \hdots & a_n^q \\
	\vdots & \vdots & \ddots & \vdots \\
	\quad a_1^{q^{k-1}} & \quad a_2^{q^{k-1}} & \hdots &\quad a_n^{q^{k-1}} \\
	\end{pmatrix}.
	\]

(See, for example, \cite{Gab85} and \cite[\S\, 2]{ravagnani15} for the definition and basic facts about MRD codes.)
\end{example}

\begin{example} \label{MRD}
{\rm 
{Assume for simplicity that $m \ge n$. Let $\C \subseteq \Mat$ be an MRD code of dimension $K$ over $\Fq$. Such a $\C$ is a Delsarte code 
such that $K = \dim_{F_q}\C$ is divisible by $m$ and 
$\C(X)=\{0\}$ for all subspaces $X$ of $E$ with $\dim_{F_q}X \le n-\frac{K}{m}$. The latter follows, for instance, from \cite[Proposition 6.2]{GJLR}. Thus $\rho^{1}_\C(Y) =K$ if $Y\in \Sigma(\E)$ with $\dim_{F_q}Y \ge K/m$, and hence $\rho_\C(Y) =k.$ 
Further, in view of \cite[Theorem 6.4]{GJLR}, we see that  $\rho_\C(Y) =\dim_{F_q}Y$ if $Y\in \Sigma(\E)$ with $\dim_{F_q}Y \le K/m$.. 
It follows that $(\E,\rho_{\C})$ is the uniform $q$-matroid $U(k,n)$
(i.e. the $q$-matroid with rank function $\rho(X)=\min\{\dim(X),k\}.$} }

\end{example}

 Now we recall the definition of generalized rank weights of Gabidulin rank-metric codes in terms of nullity function of the corresponding (dual) $q$-matroids. The following definition is from \cite{JP2017} where the authors prove its equivalence with the other existing definitions. 
\begin{definition}\cite[Definition 5]{JP2017}\label{gen.weight}
Let $\C$ be a Gabidulin rank-metric code over $\Fqm/\Fq$. 
\begin{enumerate}
     \item [\rm{(a)}]  The rank support $Rsupp(c)$ of any codeword $c \in \C$ is the $\Fq$-linear row space of $M_B(c)$ (the matrix representation of $c$, as in the text following Definition \ref{found}.) For a subcode $\D$ of $\C$, its rank support $Rsupp(\D)$ is the $F_q$-linear span of the set $\{Rsupp(d) : d \in \D\}$.
    \item [\rm{(b)}] Let $\dim_{\Fqm}{\C} = k$. Then for $1 \leq r \leq k$, the $r$-th generalized rank weight of $\C$ is defined as \[d_r(\C)=\min\{\dim_{\F_{q}}Rsupp(\D), \D \text{ is a subcode of } \C \text{ with } \dim_{\Fqm}(\D)=r\}.\] 
\end{enumerate}
\end{definition}
\begin{remark}
In this article, we define the rank support using row spaces, irrespective of $m \geq n$ or $m<n.$ This is following the definition by Shiromoto in \cite{S} (where one uses column spaces, but of the transposes of our matrices)  and thereafter, in \cite{GJ20}. As proved in \cite[Theorem 37]{GJ20}, this $d_r(\C)$ matches with Ravagnani's definition of generalized weights for Delsarte codes in \cite{ravagnani16} only if $m > n$.
\end{remark}
 Then we express the generalized rank weights in terms of the nullity function of dual $q$-matroid associated to the code :

\begin{proposition} \label{weights}
For a Gabidulin rank-metric code $\C$ over $\Fqm/\Fq$, the $r$-th generalized rank weight is
$${d}_r= \min \{\dim_{\Fq} X : X \in \Sigma \text{ with } \eta_{\C}^*(X) \ge r \}.$$
\end{proposition}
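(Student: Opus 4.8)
The plan is to prove the claimed equality of two minima by showing each quantity bounds the other, using as the single bridge the identity $(\eta_{\C})^*(X) = \dim_{\Fqm}\C(X)$ recorded in Remark \ref{nuCstar}. The crucial reformulation is that, by Definition \ref{CXrhoC}, the subcode $\C(X)$ is exactly $\{c \in \C : Rsupp(c) \subseteq X\}$, so the feasibility condition $\eta_{\C}^*(X) \geq r$ on the right-hand side says precisely that there are at least $r$ independent (over $\Fqm$) codewords whose rank supports all lie inside $X$. With this dictionary in place, the statement becomes a clean comparison between ``minimal support of an $r$-dimensional subcode'' and ``minimal dimension of a subspace capturing an $r$-dimensional subcode.''

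First I would prove that the right-hand minimum is at most the left-hand minimum. Take a subcode $\D \subseteq \C$ with $\dim_{\Fqm}\D = r$ attaining $d_r = \dim_{\Fq} Rsupp(\D)$, and set $X = Rsupp(\D)$. Since every $d \in \D$ satisfies $Rsupp(d) \subseteq Rsupp(\D) = X$, we have $\D \subseteq \C(X)$, whence $\eta_{\C}^*(X) = \dim_{\Fqm}\C(X) \geq r$; thus $X$ is feasible on the right and has dimension $\dim_{\Fq} X = d_r$. For the reverse inequality, take a subspace $X \in \Sigma(\E)$ attaining the right-hand minimum, so $\dim_{\Fqm}\C(X) \geq r$. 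Choose any $r$-dimensional $\Fqm$-subcode $\D \subseteq \C(X)$; by construction every codeword of $\D$ has rank support inside $X$, so $Rsupp(\D) \subseteq X$ and hence $d_r \leq \dim_{\Fq} Rsupp(\D) \leq \dim_{\Fq} X$. Combining the two inequalities gives the claimed equality.

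The step that genuinely needs care — and which I expect to be the main (though modest) obstacle — is justifying that $\C(X)$ really is an $\Fqm$-subspace containing an $r$-dimensional subcode whenever $\dim_{\Fqm}\C(X) \geq r$, and dually that $\D \subseteq \C(Rsupp(\D))$. This rests on the fact that $Rsupp$ is invariant under nonzero $\Fqm$-scaling: if $\lambda \in \Fqm^\times$, then $M_B(\lambda c) = L_\lambda M_B(c)$ for the $\Fq$-linear multiplication matrix $L_\lambda$, and left multiplication by the invertible $L_\lambda$ merely recombines the rows, leaving the $\Fq$-row space fixed; additivity of $M_B$ then shows $Rsupp(c+c') \subseteq Rsupp(c) + Rsupp(c')$. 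These facts make $\C(X)$ genuinely $\Fqm$-linear and guarantee $\D \subseteq \C(Rsupp(\D))$. The only remaining routine point is that minimizing support over subcodes of dimension \emph{exactly} $r$ coincides with minimizing over dimension \emph{at least} $r$, since any larger subcode contains an $r$-dimensional one of no larger support by monotonicity of $Rsupp$.
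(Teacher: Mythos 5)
Your proof is correct and follows essentially the same route as the paper: both reduce the claim to the identity $d_r(\C)=\min\{\dim_{\Fq}X : \dim_{\Fqm}\C(X)\ge r\}$ and then invoke Remark \ref{nuCstar} to replace $\dim_{\Fqm}\C(X)$ by $\eta_{\C}^*(X)$. The only difference is that you spell out the two-inequality argument and the $\Fqm$-linearity of $\C(X)$, which the paper's proof treats as immediate from the definitions.
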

\begin{proof}
Following the definitions of $\C(X)$ in Definition \ref{CXrhoC} and $d_r$ in Definition \ref{gen.weight}(b), we have 
$$d_r(\C) = \min \{\dim_{\Fq}X : X\in \Sigma(\E) \text{ with } \dim_{\Fqm }\C(X) \ge r\}.$$ Therefore, the statement follows from Remark \ref{nuCstar}, since $\eta_{\C}^*(X) =\dim_{\Fqm }\C(X)$.
\end{proof}
{Inspired by Remark \ref{nuCstar} and Proposition \ref{weights} we have:
\begin{definition} \label{dr}
 For any $q$-matroid $\MM=(\E,\rho)$ we set 
\[d_r(\MM)= \min \{\dim_{\Fq} X : X \in \Sigma \text{ with }   \eta^*(X) \ge r ,\}\]
where $\eta^*$ is the nullity function of the dual $q$-matroid ${\MM}^*$.
\end{definition}}

It is clear from the above description that, for a Gabidulin rank-metric code $\C$ (or more generally, for a $q$-matroid), the $r$-th generalized weight is equal to
the smallest $\Fq$-dimension of any $q$-cycle of nullity $r$ of ${\MM_{\C}}^*$. As an immediate consequence, we obtain:
 
\begin{lemma} \label{flatweight}
 Let $\C$ be a Gabidulin rank-metric code over $\Fqm/\Fq$ of dimension $k$. Then for any $1 \leq r \leq k$, ${d}_r=n-m_r$, where $m_r$ is the largest dimension over $\Fq$  
of any $q$-flat of rank $k-r$ for $\MC$.
\end{lemma}

\begin{proof} 
This follows from Definition \ref{dr} and Lemma \ref{fundamental}.
\end{proof}


The next result provides a relation between the generalized rank weights of a Gabidulin rank-metric code $\C$ and the cycles of the associated dual matroid $Cl(\MC)^*$ of the corresponding $q$-matroid $\MC$.
\begin{corollary} \label{closingin}
{Let $Cl(\mathcal{M})$ be the classical matroid associated to a $q$-matroid $\mathcal{M} = (\E,\rho)$.}
\begin{itemize}
\item [{\rm (a)}] Any cycle $X$ of $Cl(\mathcal{M})^*$ is the complement of projective spaces
(when interpreting $P(\E)$ as projective $(n-1)$-space). Its cardinality is $q^{n-1}+q^{n-2}+\cdots+q^m$, for $m$ the dimension of the flat $F$ for for which the $X$ is the complement of $Cl(F)$.
\item [{\rm (b)}] For a Gabidulin rank-metric code $\C$ we have that 
   ${d}_r$ is equal to the smallest $j$ such that 
   there exists a cycle of nullity $r$ and cardinality 
   $[j]$ for the matroid $(Cl((\E,\rho_\C))^*$.
\end{itemize}
\end{corollary}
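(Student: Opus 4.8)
The plan is to read off each part directly from the structural results already established in the excerpt, chiefly Lemma~\ref{transfer}, Proposition~\ref{compare2}, and the duality correspondence between flats and cycles. The whole corollary is really a translation exercise: everything is known at the level of $q$-flats of $\mathcal{M}$ and $q$-cycles of $\mathcal{M}^*$, and the classical matroid $Cl(\mathcal{M})$ merely re-expresses this on the projective point set $P(\E)$.

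**For part (a),** first I would recall from Lemma~\ref{transfer} that the flats of $Cl(\mathcal{M})$ are exactly the sets $P(F)$ for $F$ a $q$-flat of $\mathcal{M}$. By the standard correspondence between flats of a matroid and cycles of its dual (stated after Definition~\ref{def:dual_matroid}: $F$ is a flat of $M$ iff $E\setminus F$ is a cycle of $M^*$), a cycle $X$ of $Cl(\mathcal{M})^*$ is precisely the complement $P(\E)\setminus P(F)$ of such a flat. Interpreting $P(\E)$ as projective $(n-1)$-space, $P(F)$ for a $q$-flat $F$ of $\Fq$-dimension $m$ is a projective $(m-1)$-subspace, so its complement is ``projective space minus a projective subspace,'' which is the first assertion. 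For the cardinality, I would simply count: $|P(\E)| = q^{n-1}+\cdots+1$ and $|P(F)| = q^{m-1}+\cdots+1$, so the complement has cardinality $q^{n-1}+q^{n-2}+\cdots+q^m$, matching the claim. This is a direct count using $|P(F)|=(q^m-1)/(q-1)$.

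**For part (b),** I would combine part (a) with the weight characterization already in place. By the discussion following Definition~\ref{dr} together with Lemma~\ref{flatweight}, $d_r$ equals $n-m_r$ where $m_r$ is the largest $\Fq$-dimension of a $q$-flat of rank $k-r$ for $\MC$. Under the isomorphism of Proposition~\ref{compare2}(b), $q$-cycles of $\mathcal{M}^*$ of nullity $r$ correspond bijectively to cycles of $Cl(\mathcal{M})^*$ of nullity $r$, with matching nullity because the lattice isomorphism preserves rank. A $q$-cycle of nullity $r$ is the orthogonal complement of a $q$-flat of rank $k-r$ (Lemma~\ref{fundamental}), whose complementary cycle in $Cl(\mathcal{M})^*$ has cardinality $[j]=q^{n-1}+\cdots+q^{n-j}$ exactly when the associated $q$-flat has $\Fq$-dimension $n-j$, by the cardinality formula in part (a) with $m=n-j$. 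Maximizing the flat dimension is the same as minimizing $j$, so the smallest $j$ admitting a nullity-$r$ cycle of cardinality $[j]$ is precisely $n-m_r=d_r$.

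**The main obstacle** I anticipate is purely bookkeeping with the two indexing conventions for cardinality: the symbol $[j]=q^{n-1}+\cdots+q^{n-j}$ from the preliminaries versus the form $q^{n-1}+\cdots+q^m$ appearing in part (a), and reconciling ``dimension $m$ of the flat'' with ``nullity $j$ of the cycle.'' I would be careful to set $m=n-j$ consistently so that the cardinality $q^{n-1}+\cdots+q^m$ of the complement of $P(F)$ agrees with $[j]$, and to verify that the nullity of the cycle in $Cl(\mathcal{M})^*$ genuinely equals $r$ rather than some shifted value, which follows because the complementation $F\mapsto P(\E)\setminus P(F)$ intertwines the rank function of $Cl(\mathcal{M})$ with the nullity function of $Cl(\mathcal{M})^*$ in the usual matroid-duality manner. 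No deep new argument is needed; the content is entirely in assembling the cited lemmas correctly.
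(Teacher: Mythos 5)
Your proposal is correct and follows essentially the same route as the paper: part (a) via Lemma~\ref{transfer} together with the classical flat--cycle duality and a direct cardinality count, and part (b) by translating $d_r$ through Lemma~\ref{fundamental}/Lemma~\ref{flatweight} and Lemma~\ref{transfer} into a statement about cycles of $Cl(\MC)^*$ of nullity $r$ and cardinality $[j]$. The paper writes part (b) as a single chain of equalities rather than passing explicitly through $d_r=n-m_r$, but the underlying steps and cited lemmas are the same.
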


\begin{proof}
Part (a): This follows from Lemma \ref{transfer}. 
Proof of (b)
\begin{align*}
{d}_r &= \min\{\dim_{\Fq}X | \eta^*(X) = r\}\\
&= \min\{\dim_{\Fq} X |X \text{ is a $q$-cycle of } ({\MC})^* \text{ of nullity } r\}\\
&= \min\{j |X^{\perp} \text{ is a $q$-flat of } \MC \text{ of rank } k-r \text{ and dimension } n-j\},\\
&\text{ and, using Lemma \ref{transfer} again:}\\
&=\min\{j |P(X^{\perp}) \text{ is a flat of } Cl(\MC) \text{ of rank } k-r \text{ and cardinality } q^{n-j-1} +\cdots+1\}\\
&=\min\{j |P(X^{\perp})^c \text{ is a cycle of } Cl(\MC)^* \text{ of nullity } r \text{ and cardinality } [j]\}.
\end{align*}
\end{proof}

 We use this relation and the following result from \cite{JV2} about classical matroids to express the generalized rank weights in terms of certain Betti numbers.

\begin{theorem}\cite[Theorem 2]{JV21} \label{thm:JV21}
{ Let $M = (E, \r)$ be a matroid on a finite set $E$ and $N_i(M)$ the set of cycles of $M$ of nullity $i$. Then
\[\beta_{i, \sigma}(M) \neq 0 \text{ if and only if } \sigma \in N_i(M).\]}
\end{theorem}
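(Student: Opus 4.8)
The plan is to compute the multigraded Betti numbers $\beta_{i,\sigma}(M)$ directly via Hochster's formula and then translate the resulting homological vanishing conditions into the matroid-theoretic language of cycles. Recall that for the Stanley-Reisner ring $S_M=S/I_M$ of the independence complex $\Delta_M$, the $\mathbb{N}^{|E|}$-graded Betti numbers are supported only in squarefree degrees $\sigma\subseteq E$, and Hochster's formula expresses each of them as
\[\beta_{i,\sigma}(M) = \dim_{\mathbb{K}} \tilde{H}_{|\sigma|-i-1}\bigl(\Delta_M|_\sigma;\, \mathbb{K}\bigr),\]
where $\Delta_M|_\sigma=\{F\in\Delta_M : F\subseteq\sigma\}$ is the induced subcomplex on the vertex set $\sigma$. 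The first observation is that $\Delta_M|_\sigma$ is exactly the independence complex of the restricted matroid $M|_\sigma=(\sigma,\r|_{2^\sigma})$, whose rank is $\r(\sigma)$. Thus the whole problem reduces to understanding the reduced homology of independence complexes of matroids.

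The second step uses the fact that the independence complex of any matroid is pure and shellable, hence Cohen-Macaulay over $\mathbb{K}$. Consequently the reduced homology of $\Delta_{M|_\sigma}$ is concentrated in the single top degree $\r(\sigma)-1$. Feeding this into Hochster's formula, $\beta_{i,\sigma}(M)$ can be nonzero only when $|\sigma|-i-1=\r(\sigma)-1$, i.e. only when $i=|\sigma|-\r(\sigma)=\nn(\sigma)$. In particular, for each fixed $\sigma$ there is at most one homological degree $i$ carrying a nonzero Betti number, and that degree is precisely the nullity of $\sigma$; this already pins down the ``nullity $i$'' part of the statement.

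It remains to decide, for $i=\nn(\sigma)$, exactly when $\tilde{H}_{\r(\sigma)-1}(\Delta_{M|_\sigma})\neq 0$. Here I would invoke the dichotomy for matroid independence complexes: $\tilde{H}_{\r(\sigma)-1}(\Delta_{M|_\sigma})\neq 0$ if and only if $M|_\sigma$ has no coloops. One direction is easy: if $e\in\sigma$ is a coloop of $M|_\sigma$, then every independent subset of $\sigma$ extends by $e$, so $\Delta_{M|_\sigma}$ is a cone with apex $e$ and is therefore contractible. The substantive direction, that a coloop-free matroid has nonzero top reduced homology, is the main obstacle; it follows from the wedge-of-spheres structure of shellable matroid complexes (equivalently, from the non-vanishing of $(-1)^{\r(\sigma)-1}$ times the reduced Euler characteristic for coloop-free matroids), and this is the one genuinely nontrivial input I would need to cite or reprove.

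Finally I would translate the coloop condition into the cycle condition. Since the nullity $\nn$ is monotone non-decreasing under inclusion, and since removing an element $e$ from $\sigma$ either leaves $\r$ unchanged (so $\nn$ drops by one) or drops $\r$ by one (so $\nn$ is unchanged, which is exactly the coloop case), the assertion ``$M|_\sigma$ has no coloop'' is equivalent to ``$\nn(\sigma\setminus e)<\nn(\sigma)$ for every $e\in\sigma$'', which by monotonicity is in turn equivalent to ``$\nn(\tau)<\nn(\sigma)$ for every proper subset $\tau\subsetneq\sigma$''. This last condition is precisely the statement that $\sigma$ is minimal among sets of nullity $\nn(\sigma)$, i.e. a cycle of nullity $\nn(\sigma)=i$. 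Combining the three steps then yields $\beta_{i,\sigma}(M)\neq 0$ if and only if $\sigma\in N_i(M)$.
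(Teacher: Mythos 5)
Your argument is correct: Hochster's formula, the identification of $\Delta_M|_\sigma$ with the independence complex of $M|_\sigma$, concentration of homology in top degree by shellability (forcing $i=\nn(\sigma)$), the coloop dichotomy for non-vanishing of top homology, and the translation ``$M|_\sigma$ coloop-free $\Leftrightarrow$ $\sigma$ minimal of its nullity'' all check out, including the degenerate cases $\sigma=\emptyset$ and $\sigma$ containing loops. The paper itself gives no proof --- it imports the statement from the cited reference of Johnsen--Verdure --- and your route is essentially the proof given there, with the same single nontrivial external input you correctly flag (non-vanishing of the top reduced homology, equivalently of $T_{M|_\sigma}(0,1)$, for coloop-free matroids).
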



\begin{theorem} \label{betti}
For a Gabidulin rank-metric code $\C$ we have that:
\begin{itemize}
\item [{\rm (a)}] \[{d}_r= \min \{j | \beta_{r,[j]} \ne 0\},\]
for the $\mathbb{N}-$graded Betti numbers of  the Stanley-Reisner ring associated to the independence complex of the classical matroid $Cl(\MC)^*$
\item [{\rm (b)}] These $\mathbb{N}-$graded Betti numbers satisfy
$\beta_{r,s}=0$, for all $s \le \frac{q^n-1}{q-1}$ where $s$ is not of the form $[j]$ for some $j$.
\end{itemize}
\end{theorem}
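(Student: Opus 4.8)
The plan is to reduce both statements to facts about the cycles of the matroid $Cl(\MC)^*$ that are already established in Corollary \ref{closingin}, using the characterization of nonvanishing Betti numbers from Theorem \ref{thm:JV21}. The bridge is the elementary relation between the $\mathbb{N}^{|E|}$-graded and $\mathbb{N}$-graded Betti numbers recorded in Subsection \ref{Stanley-Reisner}, namely $\beta_{r,s}=\sum_{|\sigma|=s}\beta_{r,\sigma}$. Since all fine-graded Betti numbers are nonnegative integers, this sum is nonzero precisely when at least one summand is nonzero, so that
\[
\beta_{r,s}\neq 0 \iff \text{there exists } \sigma\subseteq P(\E) \text{ with } |\sigma|=s \text{ and } \beta_{r,\sigma}\neq 0.
\]

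First I would apply Theorem \ref{thm:JV21} to the matroid $M=Cl(\MC)^*$. It gives $\beta_{r,\sigma}\neq 0$ if and only if $\sigma$ is a cycle of $Cl(\MC)^*$ of nullity $r$. Combined with the displayed equivalence, this yields the key reformulation: $\beta_{r,s}\neq 0$ if and only if $Cl(\MC)^*$ has a cycle of nullity $r$ and cardinality $s$. Both parts of the theorem then follow by reading off the admissible cardinalities $s$. For part (a), I would invoke Corollary \ref{closingin}(b), which states that ${d}_r$ is the smallest $j$ for which $Cl(\MC)^*$ has a cycle of nullity $r$ and cardinality $[j]$. By the reformulation above, the existence of such a cycle is exactly the condition $\beta_{r,[j]}\neq 0$, whence ${d}_r=\min\{j\mid \beta_{r,[j]}\neq 0\}$, as claimed.

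For part (b), I would use Corollary \ref{closingin}(a): every cycle of $Cl(\MC)^*$ is the complement in $P(\E)$ of some $Cl(F)$ with $F$ a flat, and its cardinality is $q^{n-1}+q^{n-2}+\cdots+q^{m}$ for $m=\dim F$; in the notation of the paper this equals $[n-m]$, hence is of the form $[j]$. Consequently no cycle can have a cardinality $s\le \frac{q^n-1}{q-1}$ that fails to be of the form $[j]$ for some $j$. By the reformulation, $\beta_{r,s}=0$ for every such $s$ and every $r$.

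The argument is essentially an assembly of the earlier results, so no serious obstacle remains; the only point requiring genuine care is the passage from the fine $\mathbb{N}^{|E|}$-grading to the coarse $\mathbb{N}$-grading, where one must note that the nonnegativity of the $\beta_{r,\sigma}$ rules out cancellation and thus makes nonvanishing of $\beta_{r,s}$ equivalent to the existence of a single cycle of the correct nullity and cardinality. Everything else is a direct translation through Lemma \ref{transfer} and Corollary \ref{closingin}, which have already done the combinatorial work of identifying exactly which subsets of $P(\E)$ arise as cycles of $Cl(\MC)^*$.
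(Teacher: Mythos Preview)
Your proposal is correct and follows essentially the same route as the paper: both combine Corollary~\ref{closingin} (identifying the cycles of $Cl(\MC)^*$ and their cardinalities) with Theorem~\ref{thm:JV21} (characterizing the nonvanishing $\beta_{r,\sigma}$ via cycles of nullity $r$). If anything, you are slightly more careful than the paper in making explicit the passage from the $\mathbb{N}^{|P(\E)|}$-grading to the $\mathbb{N}$-grading via $\beta_{r,s}=\sum_{|\sigma|=s}\beta_{r,\sigma}$ and the nonnegativity of the summands; the paper glosses over this step.
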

 
 \begin{proof}
 { From Corollary \ref{closingin} we have,
 can write, 
\begin{equation}\label{eq:2}
{d}_r =\min\{j | \sigma \in N_r(Cl(\mathcal{M})^*) \text{ and } |\sigma|=q^{n-1}+\ldots+q^{n-j} \}.
\end{equation}

Now Theorem \ref{thm:JV21} implies that $\sigma$ is a cycle of $Cl(\MC)^*$ of nullity $r$ and of cardinality $[j]$, if and only if the $\mathbb{N}$-graded Betti number $\beta_{r,[j]}$ of the associated Stanley-Reisner ring is nonzero. Thus (a) follows directly from the expression of ${d}_r$ in equation \eqref{eq:2}. 
  
 To prove (b), first we recall from Corollary \ref{closingin} that all the cycles of $Cl(\MC)^*$ are of cardinality $[j]$ for some $j$ with $1 \le j \le n$. Now from Theorem \ref{thm:JV21}, it is clear that the $\mathbb{N}-$graded Betti numbers $\beta_{r,s}$ are zero if $s$ is not of the form $[j]$ for some positive $j$ with $j \le n$. 
 }
 \end{proof}
  	 	\section{Number of codewords of each rank weight via classical matroids} \label{sec4}
        For this section we introduce some notations and fix some parameters.
 	 	Let $m,n$ be positive integers and $\C \subseteq \F_{q^m}^n$ be a Gabidulin rank-metric code over $\F_{q^m}/\Fq$ of dimension $k \le \min\{m,n\}$ with a generator matrix $G = [(g_{i,j})]$ (i.e. a $k \times n$-matrix, with entries in $\F_{q^m}$, and whose row space over $\F_{q^m}$ is $\C$) . Let $ Q = q^m$ and $\tilde{Q} = Q^r$ for some $r \in \mathbb{N}$. Let $\tilde{\C}=\C \otimes_{\F_{Q}} \F_{Q^r}$ denote the extension of $\C$ and thus $\tilde{\C}$ can be considered as a Gabidulin rank-metric code over $\F_{\tilde{Q}}/\Fq$. 
 	 	
 	 	\begin{definition} \label{extension}
 	 	For $0 \leq s \leq k,$ let $A_{\C,s}^{\tilde{Q}}$ denote the number of words of rank weight $s$ in $\tilde{\C}=\C \otimes_{\F_{Q}} \F_{Q^r}$. 
 	 	\end{definition} 
 	 	In this and the following section we will find expressions for $A_{\C,s}^{\tilde{Q}}$.
 	 	First we fix bases for the field extensions considered in this section. Let $\{g_1,g_2,\cdots,g_r\}$ be an arbitrary but fixed basis for $ \FQt$ over $\FQ$. Therefore combining the fixed $\Fq$-basis $\{b_1, \ldots, b_m\}$ of $\FQ$, we then once and for all use $\{b_ig_j\}_{1 \le i \le m,1\le j \le r}$ as a basis for $\F_{\tilde{Q}}$ over $\Fq$. We use the following ordering $$B_r=\{g_1b_1,g_1b_2,\cdots,g_1b_m,g_2b_1,g_2b_2, \cdots,g_2b_m,\cdots,g_rb_1,g_rb_2,\cdots,g_rb_m\}$$ of the $\Fq$-basis of $\F_{\tilde{Q}}$ and consider the representation of codewords in $\tilde{\C}$ as $(mr \times n)$-matrices with entries in $\Fq$ with respect to the ordered basis $B_r$.
 	 	
 In this section we express $A_{\C,s}^{\tilde{Q}}$ in terms of the nullity function of a classical matroid associated to the $q$-matroid corresponding to the Gabidulin rank-metric code $\C$.

%
\begin{lemma}\label{independent}
 Let $\C \subseteq \FQ^n$ be a Gabidulin rank-metric code over $\FQ/\Fq$ and $\tilde{\C} = \C \otimes_{\F_{Q}}\F_{\tilde{Q}},$ for $\tilde{Q} = Q^r.$ Then we have 
 \[\dim_{\F_Q}\C(U) = \dim_{\F_{\tilde{Q}}}\tilde{\C}(U)= \frac{1}{r}\dim_{\F_Q}\tilde{\C}(U).\]
 \end{lemma}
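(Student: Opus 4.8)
The plan is to reduce the constraint $Rsupp(c)\subseteq U$ to a single system of $\Fq$-linear equations that is \emph{literally the same} for $\C$ and for $\tilde\C$, and then to invoke the invariance of matrix rank under field extension. The two equalities in the statement will then be split off: the first one ($\dim_{\FQ}\C(U)=\dim_{\FQt}\tilde\C(U)$) is the substantive one, while the second ($\dim_{\FQt}\tilde\C(U)=\frac{1}{r}\dim_{\FQ}\tilde\C(U)$) is a dimension count over nested fields.

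First I would record how the rank support reads off the $\Fq$-coefficient vectors of a codeword. Fix a matrix $H_U$ with entries in $\Fq$ whose row space is $U^{\perp}$, so that for $v\in\Fq^n$ one has $v\in U$ iff $H_U v^t=0$. For a codeword $c=(w_1,\ldots,w_n)$ over either field, expand each coordinate in a fixed $\Fq$-basis $\{\beta_i\}$ of that field to get $c=\sum_i \beta_i r_i$ with $r_i\in\Fq^n$, so that $Rsupp(c)=\langle r_i\rangle_{\Fq}$. Since $H_U c^t=\sum_i\beta_i\,(H_U r_i^t)$, with each $H_U r_i^t\in\Fq^{\,n-\dim U}$ and the $\beta_i$ being $\Fq$-linearly independent, we obtain $Rsupp(c)\subseteq U$ iff $H_U c^t=0$. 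The essential point is that $H_U$ has entries in $\Fq$, so this characterization holds verbatim whether $c\in\FQ^n$ or $c\in\FQt^n$.

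Next I would pass to the generator matrix. Let $G$ be a $k\times n$ generator matrix of $\C$, of full row rank $k$ over $\FQ$; then $x\mapsto xG$ is injective over both $\FQ$ and $\FQt$, since a nonvanishing $k\times k$ minor of $G$ stays nonvanishing over the larger field. Put $M=H_U G^{t}$, a matrix with entries in $\FQ$. Through the isomorphism $x\mapsto xG$, the previous step identifies $\C(U)$ with $\ker_{\FQ}M$ and $\tilde\C(U)$ with $\ker_{\FQt}M$, whence $\dim_{\FQ}\C(U)=k-\rank_{\FQ}M$ and $\dim_{\FQt}\tilde\C(U)=k-\rank_{\FQt}M$. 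Because $M$ has entries in $\FQ\subseteq\FQt$, its rank is unchanged under the extension (the size of the largest nonvanishing minor is a determinant condition over $\FQ$), which gives the first equality. For the second equality I would check that $\tilde\C(U)$ is an $\FQt$-subspace of $\tilde\C$: multiplying a codeword by a nonzero $\lambda\in\FQt$ replaces its $\Fq$-coefficient vectors by $\Fq$-combinations of the old ones, and invertibility of $\lambda$ forces the row space to be preserved, so $Rsupp(\lambda c)=Rsupp(c)$. Hence $\dim_{\FQ}\tilde\C(U)=[\FQt:\FQ]\dim_{\FQt}\tilde\C(U)=r\,\dim_{\FQt}\tilde\C(U)$.

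The main obstacle is conceptual rather than computational: it is the recognition, in the first step, that the rank-support condition---which superficially depends on the ambient field---is in fact governed by a fixed parity-check matrix $H_U$ defined over the base field $\Fq$. Once that observation is secured, both equalities follow from standard facts, namely that rank is insensitive to field extension and that dimension scales by the degree $[\FQt:\FQ]=r$. The only technical care needed is to confirm the injectivity of $x\mapsto xG$ over $\FQt$ and to verify that one and the same $H_U$ encodes the constraint over both fields.
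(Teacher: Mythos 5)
Your argument is correct, and it reaches the conclusion by a genuinely different route than the paper. The paper's proof works with the explicit $\FQ$-linear decomposition $\tilde{\C}=\C g_1\oplus\cdots\oplus \C g_r$ and observes that, with the compatibly ordered basis $B_r=\{g_jb_i\}$, the support condition splits componentwise, so $\tilde{\C}(U)=\C(U)g_1\oplus\cdots\oplus\C(U)g_r$; this gives $\dim_{\FQ}\tilde{\C}(U)=r\dim_{\FQ}\C(U)$ directly, and the first equality then follows from the trivial degree count. You instead encode the condition $Rsupp(c)\subseteq U$ as the vanishing of $H_Uc^t$ for a parity-check matrix $H_U$ defined over $\Fq$, identify $\C(U)$ and $\tilde{\C}(U)$ with the kernels over $\FQ$ and $\FQt$ of the \emph{same} matrix $M=H_UG^t$ with entries in $\FQ$, and invoke invariance of rank under field extension; this is essentially the viewpoint the paper itself mentions only afterwards, in Remark \ref{analogue}, via $\rho_{\tilde\C}(U)=\rank(GY^T)$. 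Your route has two advantages: it does not depend on the particular choice and ordering of the basis $B_r$ (the paper's claim that $\tilde{\C}(U)$ decomposes as $\bigoplus_j\C(U)g_j$ is stated as ``clear'' but really rests on that basis choice), and it makes explicit that $\tilde{\C}(U)$ is an $\FQt$-subspace --- a fact the lemma's statement silently presupposes and which you verify via $Rsupp(\lambda c)=Rsupp(c)$ for $\lambda\in\FQt^\times$. The paper's decomposition, on the other hand, is more elementary and exhibits the isomorphism $\tilde{\C}(U)\cong\C(U)^{\oplus r}$ concretely, which is reused implicitly elsewhere in Section \ref{sec4}.
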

 \begin{proof}

  The second equality is clear as $\dim_{\F_Q}\F_{\tilde{Q}}= r$.
 
 For the first equality, it is enough to show that $\tilde{\C}(U)$ is isomorphic to $\C(U)^{\oplus r}$ as $\F_Q$-vector spaces. We observe that $\tilde{\C}$, which is given as a row space over $\F_{\tilde{Q}}$ of a matrix with entries in $\FQ$, can be written as a direct sum $Cg_1 \oplus Cg_2 \oplus \cdots  \oplus  Cg_r$. The way we have chosen our basis $B_r$ to express codewords, it is clear that 
$\tilde{\C}(U)$ is a direct sum of  
$\C(U)g_1\oplus \C(U)g_2\oplus \cdots \oplus \C(U)g_r.$ Hence it 
is clear that $\dim_{\FQ}\tilde{\C}(U)=r \dim_{\FQ}\C(U)$.
 \end{proof}
  


  \begin{remark}
 The above lemma implies that $\dim_{\F_{\tilde{Q}}}\tilde{\C}(U)$ is independent of the choice of $r$ so that $\tilde{Q} = Q^r$.
 \end{remark}

\begin{corollary} \label{invariance}
{ Let $\C \subseteq \FQ^n$ be a Gabidulin rank-metric code and $\tilde{\C}$ be the extended code $\C \otimes_{\FQ} \tilde{Q}$ where $\tilde{Q} = Q^r$ for some $r \in \mathbb{N}$. Then the $q$-matroids  $(\E,\rho_{\tilde{\C}})$ corresponding to the codes $\tilde{\C}$ are the same  for any $r \in \mathbb{N}$. }
\end{corollary}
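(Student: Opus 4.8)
The plan is to show the stronger statement that $\rho_{\tilde{\C}} = \rho_{\C}$ as functions on $\Sigma(\E)$, from which the claim follows at once, since the right-hand side $\rho_{\C}$ does not involve $r$. The entire content is already packaged in Lemma \ref{independent}, so the proof should amount to applying that lemma twice and subtracting.

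First I would unwind the definition of the rank function of the $q$-matroid attached to the extended code. By Definition \ref{CXrhoC}(b) applied to the Gabidulin code $\tilde{\C}$ over $\FQt/\Fq$, we have
\[
\rho_{\tilde{\C}}(X) = \dim_{\FQt}\tilde{\C} - \dim_{\FQt}\tilde{\C}(X^{\perp}) \qquad \text{for all } X \in \Sigma(\E).
\]
The first term is handled by taking $U = \E$ in Lemma \ref{independent}: since every codeword has its row space contained in $\E$, we have $\C(\E) = \C$ and $\tilde{\C}(\E) = \tilde{\C}$, so the lemma gives $\dim_{\FQt}\tilde{\C} = \dim_{\FQt}\tilde{\C}(\E) = \dim_{\FQ}\C(\E) = \dim_{\FQ}\C$. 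The second term is handled by taking $U = X^{\perp}$ in the same lemma, which yields $\dim_{\FQt}\tilde{\C}(X^{\perp}) = \dim_{\FQ}\C(X^{\perp})$.

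Substituting both equalities into the displayed formula gives
\[
\rho_{\tilde{\C}}(X) = \dim_{\FQ}\C - \dim_{\FQ}\C(X^{\perp}) = \rho_{\C}(X)
\]
for every $X \in \Sigma(\E)$, where the last equality is again Definition \ref{CXrhoC}(b), now for the base code $\C$. Since $\rho_{\C}$ makes no reference to the extension degree $r$, the rank function $\rho_{\tilde{\C}}$ is the same for every $r \in \mathbb{N}$, and hence so is the $q$-matroid $(\E,\rho_{\tilde{\C}})$.

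There is no genuine obstacle here; the corollary is essentially a restatement of Lemma \ref{independent} once one recognizes that the correct comparison is between $\FQ$-dimensions over $\C$ and $\FQt$-dimensions over $\tilde{\C}$. The only point deserving a word of care is the evaluation at the full space $U = \E$, ensuring that the total code dimensions match (equivalently, that the common value $k = \dim_{\FQ}\C = \dim_{\FQt}\tilde{\C}$ is preserved under extension); once that is noted, the two applications of the lemma combine immediately.
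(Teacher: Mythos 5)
Your proof is correct and takes essentially the same route as the paper: both reduce the claim to Lemma \ref{independent} applied to the two terms $\dim_{\FQt}\tilde{\C}(\E)$ and $\dim_{\FQt}\tilde{\C}(X^{\perp})$ in the definition of $\rho_{\tilde{\C}}$. Your version merely makes explicit the intermediate identity $\rho_{\tilde{\C}}=\rho_{\C}$, which the paper leaves implicit.
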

\begin{proof}
For any $U \subseteq \Fq^n$, we have $$\rho_{\tilde{\C}}(U)=\dim_{\F_{\tilde{Q}}} \tilde{\C}-
\dim_{\F_{\tilde{Q}}} \tilde{\C}(U^{\perp})=$$
$$\dim_{\F_{\tilde{Q}}} \tilde{\C}(\E)-\dim_{\F_{\tilde{Q}}} \tilde{\C}(U^{\perp}),$$ which is independent of $r$ by Lemma \ref{independent}.
\end{proof}
\begin{remark} \label{analogue}
 Corollary \ref{invariance} is an obvious consequence if we consider the equivalent definition of rank function of the $q$-matroid associated to Gabidulin rank metric code as given in \cite[Definition 22]{JP2017}. Indeed, $\rho_{\tilde{\C}}(U)= rank (G Y^T)$ where $G$ is a generator matrix of $\C$ and $Y^T$ is the transpose of a generator matrix of $U$. Since all codes 
$\tilde{\C}$ defined as in Definition \ref{extension} have a common generator matrix, the $q$-matroids associated the codes are also same. 
\end{remark}
 	 	Now we move onto giving our main result of this section, i.e., an expression for $A_{\C,s}^{\tilde{Q}}$, the number of codewords with rank weight $s$ for $s \in \{1, \ldots, n\}$, using the classical matroids corresponding to the $q$-matroid $(\E,\rho_C)$.  

 First we define the some notions associated to $\tilde{\C}$. 
 \begin{definition}\label{def:tilde}
 Let $\C$ be a Gabidulin rank-metric code over $\F_{Q}/\Fq$ of length $n$ and $\tilde{\C} = \C\otimes_{\FQ}\tilde{Q}$. Then
 for any subspace $U \subseteq \Fq^n$,
 	 	\[
\tilde{\C}(U) := \{\underline{x} \in \tilde{\C}~|~ Rsupp(\underline{x}) \subseteq U\} \text{ and } A_{\C,U}^{\tilde{Q}} := |\{\underline{x} \in \tilde{\C} | Rsupp(\underline{x}) = U\}|. 	 	
 	 	\] 
 \end{definition}

 	 	
	 	Then following Definition \ref{extension} and Definition \ref{def:tilde}, we have 
	 	\begin{equation}\label{eq:4}
	 	    A_{\C,n}^{\tilde{Q}} = A_{\C,E}^{\tilde{Q}} \text{ and }
	 	A_{\C,s}^{\tilde{Q}} = \sum\limits_{\substack{U \subseteq \E \\ \dim U = s}} |A_{\C,U}^{\tilde{Q}}|.
	 	\end{equation}
	 	\begin{proposition} \label{help}
	 	Let $\C$ be a Gabidulin rank-metric code over $\F_{Q}/\Fq$ of length $n$. Let $\MC = (\Fq^n, \rho_{\C})$ be its corresponding $q$-matroid and $cl(\MC)=(P(\Fq^n), \r)$ be the associated classical matroid. Then for $\tilde{Q} = Q^r$, 
	 	$$A_{\C,n}^{\tilde{Q}} = (-1)^{{\frac{q^n-1}{q-1}}}\sum\limits_{\gamma \subseteq P(\E)} (-1)^{|\gamma|} \tilde{Q}^{{\nn_{Cl(\mathcal{M}_{\C})}^*}(\gamma)}, $$ where $\nn_{Cl(\mathcal{M}_{\C})}^*$ denotes the nullity function of the dual matroid $Cl(\MC)^*$.
	    \end{proposition}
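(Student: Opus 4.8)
The plan is to compute $A_{\C,n}^{\tilde{Q}} = A_{\C,E}^{\tilde{Q}}$, the number of codewords of $\tilde{\C}$ whose rank support is all of $\E$, by an inclusion--exclusion argument over the points of the projective space $P(\E)$, in direct analogy with the classical ``full support'' count for Hamming-metric codes. The starting observation is that a codeword $x \in \tilde{\C}$ satisfies $Rsupp(x) = \E$ if and only if $Rsupp(x)^\perp = \{\mathbf{0}\}$, that is, if and only if the subspace $Rsupp(x)^\perp$ contains no $1$-dimensional subspace $p \in P(\E)$. This reformulation via orthogonal complements is what lets a condition on a \emph{subspace} support be tested point-by-point.

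Accordingly, I would introduce for each point $p \in P(\E)$ the ``bad'' set $B_p = \{x \in \tilde{\C} : p \subseteq Rsupp(x)^\perp\}$, so that $A_{\C,E}^{\tilde{Q}}$ counts exactly the codewords lying in none of the $B_p$. Inclusion--exclusion then gives
\[ A_{\C,E}^{\tilde{Q}} = \sum_{\delta \subseteq P(\E)} (-1)^{|\delta|} \Big| \bigcap_{p \in \delta} B_p \Big|. \]
For a fixed $\delta$, the requirement $p \subseteq Rsupp(x)^\perp$ for all $p \in \delta$ is equivalent to $\langle \delta \rangle \subseteq Rsupp(x)^\perp$, i.e. to $Rsupp(x) \subseteq \langle \delta \rangle^\perp$; hence $\bigcap_{p \in \delta} B_p = \tilde{\C}(\langle \delta \rangle^\perp)$ in the notation of Definition \ref{def:tilde}. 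Since $\tilde{\C}(\langle \delta \rangle^\perp)$ is an $\FQt$-subspace, its cardinality is $\tilde{Q}^{\dim_{\FQt}\tilde{\C}(\langle \delta \rangle^\perp)}$.

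The next step is to rewrite the exponent as a nullity of the dual classical matroid. By Lemma \ref{independent} we have $\dim_{\FQt}\tilde{\C}(\langle \delta \rangle^\perp) = \dim_{\FQ}\C(\langle \delta \rangle^\perp)$, and by Definition \ref{CXrhoC} this equals $k - \rho_\C(\langle \delta \rangle)$, where $k = \rho_\C(\E)$. On the other hand, unwinding Definition \ref{def:dual_matroid} together with Definition \ref{class}, the dual classical matroid has nullity $\nn^*_{Cl(\MC)}(\gamma) = \r_\rho(P(\E)) - \r_\rho(P(\E)\setminus\gamma) = k - \rho_\C(\langle \gamma^c \rangle)$ for any $\gamma \subseteq P(\E)$, using $\r_\rho(P(\E)) = \rho_\C(\E) = k$. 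Taking $\gamma = \delta^c$ yields $\nn^*_{Cl(\MC)}(\delta^c) = k - \rho_\C(\langle \delta \rangle)$, so the exponent is precisely $\nn^*_{Cl(\MC)}(\delta^c)$.

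Substituting this and then re-indexing by $\gamma = P(\E)\setminus\delta$ (so that $|\delta| = |P(\E)| - |\gamma|$, with $|P(\E)| = \frac{q^n-1}{q-1}$) transforms the sum into the claimed form, the global sign $(-1)^{|P(\E)|}$ arising exactly from this change of variable. The only genuinely delicate step is the third one: one must correctly compute the nullity of $Cl(\MC)^*$ from its rank function and confirm that it matches the codeword-counting exponent. The remaining ingredients --- the inclusion--exclusion, the identification of $\bigcap_{p \in \delta} B_p$ with $\tilde{\C}(\langle \delta \rangle^\perp)$, and the $r$-invariance supplied by Lemma \ref{independent} --- are routine once the bad events are phrased through orthogonal complements rather than by naive point avoidance.
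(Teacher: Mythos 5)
Your proposal is correct and follows essentially the same route as the paper: an inclusion--exclusion over points of $P(\E)$ (equivalently, over the hyperplanes $p^\perp$), identification of each intersection of "bad" sets with $\tilde{\C}(\langle\delta\rangle^\perp)$, the $r$-invariance from Lemma \ref{independent}, and the translation of the exponent $k-\rho_\C(\langle\delta\rangle)$ into the dual nullity $\nn^*_{Cl(\MC)}(\delta^c)$ followed by complementation of the index set. The only cosmetic difference is that the paper subtracts $|\bigcup_i\tilde{\C}(U_i)|$ from $\tilde{Q}^k$ whereas you fold the $\delta=\emptyset$ term into the same alternating sum.
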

\begin{proof}
We set $\MC$. Let $U_1, \ldots, U_{\frac{q^n-1}{q-1}}$ be the codimension 1 subspaces of $\E=\Fq^n$. Therefore,
 \begin{align}
 A_{\C,E}^{\tilde{Q}} &=|\tilde{\C|}-|\{\underline{x} \in \C\otimes_{\FQ}\tilde{Q} | Rsupp(\underline{x}) \subseteq U_i \text{ for some }i\}|\\
                      &=\tilde{Q}^k - |\cup_{i=1}^{\frac{q^n-1}{q-1}} \tilde{\C}(U_i)|.
 \end{align}
 We use $s_i$ to denote the 1 dimensional subspace $U_i^{\perp}$ for $1 \leq i \leq \frac{q^n-1}{q-1}$.
 Note that 
 \begin{align*}
 \dim \tilde{\C}(U_i)&= \dim_{\F_{\tilde{Q}}}{\C} - \rho_\C({U_i}^{\perp})\\
 &= k - \r_{Cl(\mathcal{M}_{\C})}(s_i)\\
 &= \nn_{Cl(\mathcal{M}_{\C})}^*(P(\E)\backslash s_i). 
 \end{align*}
 Since $\tilde{\C}(U_i) \cap \tilde{\C}(U_j) = \tilde{\C}(U_i \cap U_j)$, similarly as above we get,
 \begin{align*}
 \dim~ \tilde{\C}(U_i) \cap \tilde{\C}(U_j) &= \dim_{\F_{\tilde{Q}}}{\tilde{\C}} - \rho_\C({U_i}^{\perp} \cup {U_j}^{\perp})\\
 &= k - \r_{Cl(\mathcal{M}_{\C})}(\{s_i,s_j\})\\
 &= \nn_{Cl(\mathcal{M}_{\C})}^*(P(\E)\backslash \{s_i,s_j\}).
 \end{align*}
Following the same argument, we can say 
\[
\dim~ \tilde{\C}(\cap_{j=1}^{a} U_{i_j}) = \nn_{Cl(\mathcal{M}_{\C})}^*(P(\E)\backslash \{s_{i_1},\ldots, s_{i_a}\}),\] for any positive integer $a$.
Therefore
\begin{align*}
\cup_{i=1}^{\frac{q^n-1}{q-1}} \tilde{\C}(U_i)| &= \sum\limits_{i} |\tilde{\C}(U_i)| - \sum\limits_{i,j} |\tilde{\C}(U_i) \cap \tilde{\C}(U_j)| +\cdots + (-1)^{\frac{q^n-1}{q-1}-1} \sum |\cap_{j=1}^{\frac{q^n-1}{q-1}} \tilde{\C}(U_{i_j})|\\
&=\sum\limits_{i} \tilde{Q}^{\nn_{Cl(\mathcal{M}_{\C})}^*(P(\E)\backslash s_i)} - \sum\limits_{i,j}\tilde{Q}^{\nn_{Cl(\mathcal{M}_{\C})}^*(P(\E)\backslash \{s_i,s_j\})}  +\cdots + \\
&\qquad \qquad(-1)^{\frac{q^n-1}{q-1}-1} \tilde{Q}^{\nn_{Cl(\mathcal{M}_{\C})}^*(P(\E)\backslash P(\E))} \\
&= (-1)^{{\frac{q^n-1}{q-1}-1}}(\sum\limits_{\gamma \subseteq P(\E)}(-1)^{|\gamma|} \tilde{Q}^{\nn_{Cl(\mathcal{M}_{\C})}^*(\gamma)}
\end{align*}
Therefore $A_{\C,E}^{\tilde{Q}} = (-1)^{{\frac{q^n-1}{q-1}}} \sum\limits_{\gamma \subseteq P(\E)}(-1)^{|\gamma|} \tilde{Q}^{\nn_{Cl(\mathcal{M}_{\C})}^*(\gamma)}.$
\end{proof}
\begin{remark}
$A_{\C,E}^{\tilde{Q}}$ is of course non-zero only if $m \ge n$, since otherwise there cannot be any codeword of rank $n$ in $\tilde{\C}$.
\end{remark}

 Proposition \ref{help} can be viewed as a variant of \cite[Formula (10)]{JRV16}, the proof of which was inspired by \cite[Section 5.5.]{JP}.
 
Using the result and procedure above we find expressions for the 
 $A_{\C,s}^{\tilde{Q}}$ for $s=0,1,\cdots,n-1$. For that first we define the following $q$-matroid.
 \begin{definition}
 Let $\C$ be a Gabidulin rank-metric code of length $n$ over $\FQ/\Fq$ and let $U$ be a subspace of $\E=\Fq^n$ with $\dim_{\Fq} U =s$. By considering $\C(U)$ to be a Gabidulin rank-metric code over $\FQ/\Fq$, its corresponding $q$-matroid is defined as $\MM_{\C(U)}:=(U,\rho_U)$, where $U$ is identified with $\Fq^s$ and for any subspace $V \subseteq U$, $$\rho_U(V)=\dim ~\C(U)-\dim~\C(U)(V_U^{\perp}),$$  where $V_U^{\perp}$ is the orthogonal complement of $V$ in $U$ w.r.t. some chosen basis of $U=\Fq^s$.
 \end{definition}
\begin{lemma}\label{same}
For a Gabidulin rank-metric code $\C$ over $\FQ/\Fq$ of length $n$ and for any $\Fq$-subspace $U$ of $\Fq^n$, let $\eta_U^*$ be the nullity function of the $q$-matroid $\MM_{\C(U)}^*$. Then $\eta_U^* = \eta_{\C}^*$. In other words, the $q$-cycles of $\mathcal{M_C}^*$ contained in $U$ and the $q$-cycles of $\MM_{\C(U)}^*$ are the same.
\end{lemma}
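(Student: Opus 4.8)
The plan is to reduce the equality $\eta_U^* = \eta_\C^*$ to a single elementary identity among subcodes, and then to deduce the statement about $q$-cycles from the fact that $q$-cycles are completely determined by the nullity function. First I would invoke Remark \ref{nuCstar} twice. Applied to $\C$ itself it gives $\eta_\C^*(V) = \dim_{\Fqm}\C(V)$ for every $V \in \Sigma(\E)$, and applied to the Gabidulin code $\C(U)$, regarded as a code of length $s = \dim_{\Fq}U$ on the space $U$ identified with $\Fq^s$, it gives $\eta_U^*(V) = \dim_{\Fqm}\C(U)(V)$ for every $V \subseteq U$. Thus the asserted identity $\eta_U^* = \eta_\C^*$ on $\Sigma(U)$ is equivalent to the claim that $\C(U)(V) = \C(V)$ for all $V \subseteq U$. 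This is where the content sits, but the argument is immediate: unwinding Definition \ref{CXrhoC} and Definition \ref{gen.weight}(a), $\C(U)(V)$ consists of those codewords $c$ of $\C$ with $Rsupp(c) \subseteq U$ and $Rsupp(c) \subseteq V$; since $V \subseteq U$ the first condition is subsumed by the second, so $\C(U)(V) = \{c \in \C : Rsupp(c) \subseteq V\} = \C(V)$. Taking $\Fqm$-dimensions yields $\eta_U^*(V) = \eta_\C^*(V)$ for all $V \in \Sigma(U)$.

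Next I would translate this into the statement about $q$-cycles, recalling that a $q$-cycle of nullity $i$ is a subspace minimal, with respect to inclusion, among those of nullity $i$. For one direction, if $X \subseteq U$ is a $q$-cycle of $\MC^*$ of nullity $i$, then $X$ is minimal among all subspaces of $\E$ of nullity $i$, hence a fortiori minimal among subspaces of $U$ of that nullity; since $\eta_U^*(X) = \eta_\C^*(X) = i$ by the first paragraph, $X$ is a $q$-cycle of $\MM_{\C(U)}^*$. For the converse, suppose $X \subseteq U$ is a $q$-cycle of $\MM_{\C(U)}^*$ of nullity $i$; any proper subspace $X' \subsetneq X$ still lies in $U$, so $\eta_\C^*(X') = \eta_U^*(X') < i$ by minimality of $X$ within $\Sigma(U)$, which shows that $X$ is minimal in $\Sigma(\E)$ and hence a $q$-cycle of $\MC^*$ contained in $U$.

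The only point I would be careful about, and the nearest thing to an obstacle, is the observation used in the converse that a subspace properly contained in $X$ is automatically contained in $U$. This is exactly what makes minimality over $\Sigma(U)$ coincide with minimality over $\Sigma(\E)$, and it relies crucially on the hypothesis $X \subseteq U$; without it the two notions of minimality could differ. Everything else is a direct unwinding of the definitions, so no further estimates or constructions are needed.
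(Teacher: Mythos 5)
Your proposal is correct and follows essentially the same route as the paper: the paper's proof is precisely the observation that $\eta_U^*(V)=\dim \C(U)(V)=\dim \C(V)=\eta_\C^*(V)$ for $V\subseteq U$, from which the cycle statement follows. You merely spell out in more detail the identity $\C(U)(V)=\C(V)$ and the minimality argument for $q$-cycles, both of which the paper leaves implicit.
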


\begin{proof}
 Since $V \subseteq U$, whatever basis we pick, we obtain $\eta_U^*(V)=\dim_{\F_q}C(U)(V)=\dim_{\F_q}C(V)=\eta_{\C}^*(V)$ and thus the statement of the lemma follows.
\end{proof}

\begin{definition} \label{identify}
For any subspace $U \subset \E=\Fq^n$, with a fixed basis and dot product, and any $q$-matroid $\mathcal{M}=(\E,\rho)$ with conullity function $\eta^*$, we let $\mathcal{M}_U=(U,\rho_U)$ be the $q$-matroid with ground space $U$ and conullity function $\eta_U^*$, such that $\eta_U^*(V)=\eta^*(V),$ for all subspaces $V \subset U$. 
\end{definition}
From Lemma \ref{same} it is then clear that $(\mathcal{M}_C)_U=\mathcal{M}_{C(U)}.$

 Using Proposition \ref{help} we then obtain the following expression for $A_{\C,U}^{\tilde{Q}}$:

\begin{proposition} \label{help2}
Let $\C$ be a Gabidulin rank-metric code over $\F_{Q}/\Fq$ of length $n$. Let $\mathcal{M}=\MC = (\Fq^n, \rho_{\C})$ be its corresponding $q$-matroid and $Cl(\MC)=(P(\Fq^n), \r)$ be the associated classical matroid. Then for $\tilde{Q} = Q^r$ and $U \subseteq \Fq^n$,
$$A_{\C,U}^{\tilde{Q}}= (-1)^{{\frac{q^s-1}{q-1}}}\sum\limits_{\gamma \subseteq P(U)} (-1)^{|\gamma|} \tilde{Q}^{{\nn_{Cl(\mathcal{M}_U)}^*}(\gamma)},$$
where $\nn_{Cl(\mathcal{M}_U)}^*$ is the nullity function of the dual classical matroid $Cl(\mathcal{M}_U)^*.$
\end{proposition}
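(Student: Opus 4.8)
The plan is to reduce the statement to Proposition~\ref{help} by applying that result not to $\C$ itself but to the restricted code $\C(U)$, regarded as a Gabidulin rank-metric code of length $s=\dim_{\Fq}U$ over the base space $U\cong\Fq^s$. The point is that $A_{\C,U}^{\tilde{Q}}$ is a ``diagonal'' count (codewords whose rank support is \emph{exactly} $U$), and such a count is the full-rank case of Proposition~\ref{help} once the ambient space is shrunk from $\E$ to $U$.

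First I would pin down exactly which codewords $A_{\C,U}^{\tilde{Q}}$ counts. By Definition~\ref{def:tilde} these are the $\underline{x}\in\tilde{\C}$ with $Rsupp(\underline{x})=U$; in particular they lie in $\tilde{\C}(U)$. The computation in the proof of Lemma~\ref{independent} shows that $\tilde{\C}(U)=\C(U)g_1\oplus\cdots\oplus\C(U)g_r$, which is precisely the extension $\widetilde{\C(U)}=\C(U)\otimes_{\FQ}\FQt$ of the restricted code. Hence, after identifying $U$ with $\Fq^s$, the codewords counted by $A_{\C,U}^{\tilde{Q}}$ are exactly those of $\widetilde{\C(U)}$ whose rank support is all of $U$, i.e. those of full rank weight $s$. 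In the notation of Definition~\ref{extension} applied to $\C(U)$, this yields $A_{\C,U}^{\tilde{Q}}=A_{\C(U),s}^{\tilde{Q}}$.

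It then remains to identify the $q$-matroid governing the right-hand side and to substitute. The $q$-matroid attached to the length-$s$ code $\C(U)$ is $\MM_{\C(U)}=(U,\rho_U)$, and by Lemma~\ref{same} together with the remark following Definition~\ref{identify} this coincides with the restriction $(\MC)_U=\mathcal{M}_U$, so its associated classical matroid is $Cl(\mathcal{M}_U)=(P(U),\r_{\rho_U})$ with $|P(U)|=\frac{q^s-1}{q-1}$. Applying Proposition~\ref{help} verbatim to $\C(U)$, with $\E\mapsto U$, $n\mapsto s$ and $\MC\mapsto\mathcal{M}_U$, gives
\[
A_{\C(U),s}^{\tilde{Q}} = (-1)^{\frac{q^s-1}{q-1}}\sum_{\gamma\subseteq P(U)}(-1)^{|\gamma|}\,\tilde{Q}^{\nn_{Cl(\mathcal{M}_U)}^*(\gamma)},
\]
which is exactly the claimed formula. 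The one genuinely delicate point — the main obstacle — is the first step: verifying that $\tilde{\C}(U)$ is literally the extension of $\C(U)$ (so that Proposition~\ref{help} really does apply to a length-$s$ code) and that the condition $Rsupp=U$ translates into the full-rank-weight condition for that length-$s$ code. Both are secured by Lemma~\ref{independent} and Lemma~\ref{same}, and everything afterwards is a mechanical substitution of parameters.
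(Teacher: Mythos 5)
Your proof is correct and follows essentially the same route as the paper's: both reduce to Proposition~\ref{help} by observing that $\tilde{\C}(U)=\C(U)\otimes_{\FQ}\F_{\tilde{Q}}$, so that $A_{\C,U}^{\tilde{Q}}$ is the full-rank-support count for the length-$s$ code $\C(U)$ with associated $q$-matroid $\mathcal{M}_U$. Your write-up is in fact somewhat more careful than the paper's, which states the key identification in one line and concludes that the result ``directly follows.''
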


\begin{proof}
Recall that $A_{\C,U}^{\tilde{Q}}$ is the number of codewords in $\C\otimes_{\FQ}\tilde{Q}$, whose rank support is exactly $U$ or the number of codewords in $\tilde{\C}(U)$ whose rank support is exactly $U$. Since $\tilde{\C}(U) = \C(U) \otimes_{\FQ} {\F_{\tilde{Q}}}$, it is clear that $A_{\C,U}^{\tilde{Q}}=A_{\C(U),U}^{\tilde{Q}}$. Thus the result directly follows from Proposition \ref{help}.
\end{proof}
\begin{remark} \label{contrast}
{\rm In stark constrast to the statement in Lemma \ref{same}}, $\nn_{Cl(\mathcal{M}_U)}^*(\gamma)$ is not in general equal to 
$\nn_{Cl(\mathcal{M})}^*(\gamma)$, for $\gamma$ contained in $P(U)$.
We have $\nn_{Cl(\mathcal{M})}^*(\gamma)=0$ for all $\gamma \in P(U)$
if $U \ne E$  (all cycles of $Cl(\mathcal{M})^*$ are to big to be contained in such a $P(U)$).
\end{remark}

Nevertheless, as an immediate consequence of Proposition \ref{help2}, we get:
\begin{corollary} \label{help3}
 	$$A_{\C,s}^{\tilde{Q}} =  (-1)^{{\frac{q^s-1}{q-1}}}
 	\sum\limits_{U, \dim~U=s}\sum\limits_{\gamma \subseteq P(U)} (-1)^{|\gamma|} \tilde{Q}^{{\nn_{Cl(\mathcal{M}_U)}^*}(\gamma)},$$
 	for $s=0,1,\cdots,n-1,n$.
\end{corollary}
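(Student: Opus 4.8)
The plan is to combine the two displayed formulas in equation \eqref{eq:4} with the formula for $A_{\C,U}^{\tilde{Q}}$ established in Proposition \ref{help2}. Recall that \eqref{eq:4} gives the decomposition
\[
A_{\C,s}^{\tilde{Q}} = \sum\limits_{\substack{U \subseteq \E \\ \dim U = s}} |A_{\C,U}^{\tilde{Q}}|,
\]
expressing the number of codewords of rank weight exactly $s$ as the sum, over all $s$-dimensional subspaces $U$ of $\E = \Fq^n$, of the number of codewords whose rank support is exactly $U$. This is simply the observation that every word of rank weight $s$ has a uniquely determined rank support of dimension $s$, so the events ``$Rsupp(\underline{x}) = U$'' for distinct $U$ partition the set of weight-$s$ words.

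First I would substitute, for each fixed $U$ of dimension $s$, the explicit expression for $A_{\C,U}^{\tilde{Q}}$ supplied by Proposition \ref{help2}, namely
\[
A_{\C,U}^{\tilde{Q}} = (-1)^{\frac{q^s-1}{q-1}} \sum\limits_{\gamma \subseteq P(U)} (-1)^{|\gamma|} \tilde{Q}^{\nn_{Cl(\mathcal{M}_U)}^*(\gamma)}.
\]
The key point to check before summing is that the sign prefactor $(-1)^{\frac{q^s-1}{q-1}}$ depends only on $s = \dim U$ and not on $U$ itself; since $|P(U)| = \frac{q^s-1}{q-1}$ for every $s$-dimensional $U$, this prefactor is constant across the inner sum over $U$ and can therefore be pulled outside the sum over all $U$ of dimension $s$. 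After factoring it out, the double sum over $U$ (of dimension $s$) and over $\gamma \subseteq P(U)$ is exactly the right-hand side claimed in the statement.

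A small bookkeeping point worth flagging is the absolute value bars around $A_{\C,U}^{\tilde{Q}}$ in \eqref{eq:4}: since $A_{\C,U}^{\tilde{Q}}$ is by definition a cardinality, it is nonnegative, so the absolute value is vacuous and the displayed signed expression equals $A_{\C,U}^{\tilde{Q}}$ itself; I would simply drop the bars when making the substitution. Beyond this, the argument is a direct substitution, so there is no genuine obstacle here — all the analytic content was already discharged in Proposition \ref{help} and its consequence Proposition \ref{help2}. The only mild subtlety, which I would state explicitly, is that the inner nullity functions $\nn_{Cl(\mathcal{M}_U)}^*$ genuinely depend on $U$ through the restricted $q$-matroid $\mathcal{M}_U$ (as emphasized in Remark \ref{contrast}), so the summand cannot be simplified to a single nullity function on $P(\E)$; the sum must be left as an iterated sum over subspaces $U$ and their projective subsets $\gamma \subseteq P(U)$.
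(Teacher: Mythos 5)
Your proposal is correct and matches the paper's (implicit) argument exactly: the paper presents Corollary \ref{help3} as an immediate consequence of substituting Proposition \ref{help2} into the decomposition \eqref{eq:4}, which is precisely what you do. Your additional remarks on the constancy of the sign prefactor and the $U$-dependence of $\nn_{Cl(\mathcal{M}_U)}^*$ are accurate and consistent with Remark \ref{contrast}.
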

Here the sum is over all $U$ with $\dim~U =s$.
Since the number of $\gamma$ and $U$ is finite for a fixed $q$, we conclude that:
\begin{corollary} \label{old}

There exists a polynomial $P \in \mathbb{Z}[X]$ of degree $$\max\{ \dim~\C(U) \colon U \subseteq \Fq^n \text{ with } \dim~U =s,\}$$ such that $P(Q^r)=A^{Q^r}_{C,s}$, for all $r \in \mathbb{Z}.$
\end{corollary}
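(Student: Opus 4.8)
The plan is to read the polynomial straight off Corollary \ref{help3}, bound its degree by monotonicity of the nullity, and then pin down the leading coefficient by a Möbius inversion over the lattice of subspaces.

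\emph{Existence and polynomiality.} For fixed $q$ and $s$ the right-hand side of Corollary \ref{help3} is a finite $\mathbb{Z}$-linear combination of powers of $\tilde{Q}$: the outer sum ranges over the finitely many $U\subseteq\E$ with $\dim U=s$, and for each such $U$ the inner sum ranges over the finitely many $\gamma\subseteq P(U)$. Hence
\[P(X)=(-1)^{\frac{q^s-1}{q-1}}\sum_{\substack{U\subseteq\E\\ \dim U=s}}\ \sum_{\gamma\subseteq P(U)}(-1)^{|\gamma|}X^{\nn^*_{Cl(\mathcal{M}_U)}(\gamma)}\]
defines a polynomial $P\in\mathbb{Z}[X]$, and since $\tilde{Q}=Q^r$ we obtain $P(Q^r)=A^{Q^r}_{\C,s}$ for every $r$. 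This settles the existence part. For the degree I would first note that the nullity function of a matroid is monotone, so every exponent satisfies $\nn^*_{Cl(\mathcal{M}_U)}(\gamma)\le \nn^*_{Cl(\mathcal{M}_U)}(P(U))$. Evaluating the nullity of the dual matroid at its whole ground set returns the rank of the matroid itself, whence $\nn^*_{Cl(\mathcal{M}_U)}(P(U))=\r_{Cl(\mathcal{M}_U)}(P(U))=\rho_U(U)=\dim\C(U)$. Taking the maximum over all $U$ of dimension $s$ gives $\deg P\le D$, where $D:=\max\{\dim\C(U):\dim U=s\}$.

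\emph{The leading coefficient.} To compute the coefficient of $X^{D}$ I would pass to Möbius inversion over the lattice $\Sigma(U)$ of subspaces of $U$. Splitting the codewords of $\tilde{\C}$ supported inside $U$ according to their exact rank support gives $|\tilde{\C}(U)|=\sum_{V\subseteq U}A^{\tilde{Q}}_{\C,V}$; since $|\tilde{\C}(U)|=\tilde{Q}^{\dim\C(U)}$ by Lemma \ref{independent}, inversion yields $A^{\tilde{Q}}_{\C,U}=\sum_{V\subseteq U}\mu(V,U)\,\tilde{Q}^{\dim\C(V)}$, where $\mu$ is the Möbius function of $\Sigma(U)$. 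As $\C(V)\subseteq\C(U)$ for $V\subseteq U$, the subspaces $V$ with $\dim\C(V)=\dim\C(U)$ are precisely those in the interval $[\,Rsupp(\C(U)),\,U\,]$, so the identity $\sum_{Rsupp(\C(U))\subseteq V\subseteq U}\mu(V,U)=\delta_{Rsupp(\C(U)),U}$ shows that the coefficient of the top power $\tilde{Q}^{\dim\C(U)}$ in $A^{\tilde{Q}}_{\C,U}$ equals $1$ if $Rsupp(\C(U))=U$ and $0$ otherwise. Summing over $U$ with $\dim U=s$, the coefficient of $X^{D}$ in $P$ is the number of such $U$ with $\dim\C(U)=D$ and $Rsupp(\C(U))=U$.

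\emph{Main obstacle.} The remaining and genuinely delicate point is to show that this count is nonzero, i.e. that the value $D$ is realised by a subspace equal to its own rank support. I would attack this through the closure operator $U\mapsto Rsupp(\C(U))$ on $\Sigma(\E)$, attempting to upgrade an arbitrary maximizer to a support-closed one of dimension exactly $s$. I expect this to be the crux of the whole statement, since a maximizer need not be support-closed and its support-closure can have strictly smaller dimension, in which case the contributions to $X^D$ cancel. In particular, the care needed to treat degenerate codes — for instance one supported on a proper coordinate subspace, where $A_{\C,s}$ can vanish identically while $D>0$ — is where the real work lies, and may call either for a nondegeneracy hypothesis or for reading the asserted degree as the natural upper bound established above.
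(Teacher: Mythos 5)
Your existence argument and degree bound coincide with the paper's own proof: the paper likewise reads the polynomial off Corollary \ref{help3} and observes that every exponent is bounded by $\nn^*_{Cl(\mathcal{M}_U)}(P(U))=\dim\C(U)$, so for that part you and the authors are doing the same thing. Where you go further is the leading coefficient, and your analysis there is correct and worth keeping: the M\"obius inversion over $\Sigma(U)$ is exactly Proposition \ref{help4} in disguise, the identification of $\{V\subseteq U:\dim\C(V)=\dim\C(U)\}$ with the interval $[Rsupp(\C(U)),U]$ is right, and the conclusion that the coefficient of $X^{D}$ counts the support-closed maximizers is sound. The ``obstacle'' you flag is genuine, but it is an imprecision in the statement (and in the paper's proof, which only establishes $\deg P\le D$ and never checks the top coefficient), not a gap in your argument: for a code such as the one generated by $(1,0,\dots,0)$ over $\Fqm$, one has $A^{\tilde{Q}}_{\C,s}=0$ identically for $s\ge 2$ while $D=\dim\C(\E)>0$, so the asserted equality of degrees fails and the corollary should indeed be read as the upper bound you prove. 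In short: your proposal proves everything the paper proves, by the same route, and correctly identifies that the remaining claim in the statement is not established there either.
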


\begin{proof}
The only thing left to prove is the statement about the degrees. All exponents occurring in the expression for 
$A_{\C,s}^{\tilde{Q}}$ are at most the maximum of the 
numbers  $\nn_{U,Cl(\mathcal{M})}^*(U)$ for all $U$ 
of dimension $s$. But these numbers are simply $\dim~ \C(U)$.
\end{proof}

 \begin{remark}
{\rm Corollary \ref{old} can be easily derived from descriptions by other authors, and then typically from descriptions of Delsarte codes in general,  but we have included it here for completeness of our own exposition. See for example \cite[Remark 3.5 and Theorem 3.8]{BCR}.}
\end{remark}

 \section{Number of codewords of each rank weight and Betti numbers} \label{ncwq}	 
 
We will briefly demonstrate another well known and more direct way to find the 	
$A_{\C,s}^{\tilde{Q}}$,  i.e. the number of words of rank weight $s$ in $\C \otimes_{\F_{Q}} \F_{\tilde{Q}}$ for  $0 \leq s \leq n,$ where $\tilde{Q} = Q^r$. 

From the definition, it follows that
$$|\tilde{\C}(U)|= \sum_{V \subseteq U}A_{\C,V}^{\tilde{Q}}.$$
M{\"o}bius inversion gives:
$$A_{\C,U}^{\tilde{Q}}=\sum_{V \subseteq U}(-1)^{\dim U - \dim V}q^{{\dim U - \dim V \choose\ 2 }}|\tilde{\C}(V)|.$$
But following Lemma \ref{independent}, we have$$|\tilde{\C}(V)|=\tilde{Q}^{\dim_{F_{\tilde{Q}}}(\tilde{\C}(V))} =\tilde{Q}^{\dim_{\FQ}\C(V)}, \text{ and }$$ 
 $${\dim_{\FQ}\C(V)}=\dim_{\FQ}\C - \rho_{\C}(V^{\perp})=k - \rho_{\C}(V^{\perp})= \dim~U - \rho_{\C}^*(V)=
\eta_{\C}^*(V).$$
This gives:
\begin{proposition} \label{help4}
$$A_{\C,s}^{\tilde{Q}} = \sum_{\substack{U \subseteq \E \\ \dim~ U = s}}\sum_{V \subseteq U}(-1)^{\dim~U - \dim~V}q^{{\dim~U - \dim~V \choose\ 2 }}\tilde{Q}^{\eta_{\C}^*(V)},$$
for $s=1,\cdots,n.$
\end{proposition}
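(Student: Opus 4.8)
The plan is to reduce the statement to a single Möbius inversion over the lattice $\Sigma(\E)$ of subspaces of $\E$, ordered by inclusion. Starting from equation \eqref{eq:4}, which already expresses $A_{\C,s}^{\tilde{Q}}$ as the sum of $A_{\C,U}^{\tilde{Q}}$ over all $U$ with $\dim U = s$, it suffices to produce a closed form for each individual $A_{\C,U}^{\tilde{Q}}$ and then sum over such $U$.

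First I would record the defining counting identity. Since every codeword of $\tilde{\C}$ whose rank support lies in $U$ has rank support equal to exactly one subspace $V \subseteq U$, partitioning $\tilde{\C}(U)$ according to the exact value of the rank support yields $|\tilde{\C}(U)| = \sum_{V \subseteq U} A_{\C,V}^{\tilde{Q}}$. This is precisely a relation of the form $f(U) = \sum_{V \subseteq U} g(V)$ on the poset $\Sigma(\E)$, with $f(U) = |\tilde{\C}(U)|$ and $g(V) = A_{\C,V}^{\tilde{Q}}$, which is the exact setting for Möbius inversion.

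Next I would invoke the classical value of the Möbius function of the subspace lattice of $\Fq^n$: for $V \subseteq U$ one has $\mu(V,U) = (-1)^{\dim U - \dim V} q^{\binom{\dim U - \dim V}{2}}$. Applying inversion gives $A_{\C,U}^{\tilde{Q}} = \sum_{V \subseteq U}(-1)^{\dim U - \dim V} q^{\binom{\dim U - \dim V}{2}} |\tilde{\C}(V)|$. It then remains to evaluate the inner quantity $|\tilde{\C}(V)|$. By Lemma \ref{independent} we have $\dim_{\FQt}\tilde{\C}(V) = \dim_{\FQ}\C(V)$, so $|\tilde{\C}(V)| = \tilde{Q}^{\dim_{\FQ}\C(V)}$, and by equation \eqref{nuC} of Remark \ref{nuCstar} this exponent equals $\eta_{\C}^*(V)$. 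Substituting and summing over all $U$ of dimension $s$ via \eqref{eq:4} produces the asserted formula.

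The only genuinely delicate point is the correct identification of the Möbius function of $\Sigma(\E)$; the exponential factor $q^{\binom{\cdot}{2}}$ is easy to misstate and should be cited or verified carefully. Once that standard fact is in hand, the remaining steps are purely substitutional, relying on the already-established Lemma \ref{independent} together with the identity $\dim_{\FQ}\C(V) = \eta_{\C}^*(V)$, so no further obstruction arises.
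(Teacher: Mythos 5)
Your proposal is correct and follows essentially the same route as the paper: partition $\tilde{\C}(U)$ by exact rank support, apply M\"obius inversion on the subspace lattice of $\E$ with the standard M\"obius function $(-1)^{\dim U - \dim V}q^{\binom{\dim U - \dim V}{2}}$, and identify $|\tilde{\C}(V)|=\tilde{Q}^{\eta_{\C}^*(V)}$ via Lemma \ref{independent} and Remark \ref{nuCstar} before summing over all $U$ of dimension $s$. The paper's only difference is cosmetic: it verifies the identity $\dim_{\FQ}\C(V)=\eta_{\C}^*(V)$ by a short direct computation rather than citing the remark.
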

An advantage with this expression, compared with that in Corollary \ref{help3}, is that the conullity $\eta_{\C}^*$ refers to 
the same $q$-matroid $\MC$, for all the $U$ appearing in the formula.
 	 
 	 Comparing the two expressions of $A_{\C,s}^{\tilde{Q}}$ in Corollary \ref{help3} and Proposition \ref{help4}, we have the following result.
 	 \begin{corollary} \label{Zpolynomial}
 	 Let $\mathcal{M}=\MC = (\E= \Fq^n, \rho)$ for a Gabidulin rank-metric code $\C$ and let $Cl(\MM_U)$ be the classical matroid corresponding to the $q$-matroid $\MM_U$ for a subspace $U$ of $\E$. Then
 	 as formal polynomials in $\mathbb{Z}[X]$ we have:
 	\small $$ (-1)^{{\frac{q^s-1}{q-1}}}
 	\sum_{\substack{U \subseteq \E \\ \dim U = s}}\sum\limits_{\gamma \subseteq P(U)} (-1)^{|\gamma|}X^{{\nn_{Cl(\mathcal{M}_U)}^*}(\gamma)}=
 	 \sum_{\substack{U \subseteq \E \\ \dim U = s}}\sum_{V \subseteq U}(-1)^{\dim U - \dim V}q^{{\dim U - \dim V \choose\ 2 }}X^{\eta_{\MM}^*(V)},$$
 	for $s=0,1,\cdots,n-1,n.$
 	 \end{corollary}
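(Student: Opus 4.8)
The plan is to recognize the two sides of the asserted identity as the two expressions for $A_{\C,s}^{\tilde{Q}}$ that have already been derived, and then to conclude by the standard fact that two polynomials agreeing at infinitely many values must coincide as formal polynomials. No direct combinatorial manipulation of the sums is needed; the equality is forced by interpolation.

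First I would fix $s$, set $\tilde{Q} = Q^r$, and write $L(X)$ and $R(X)$ for the left-hand and right-hand polynomials in $\mathbb{Z}[X]$ appearing in the statement. Corollary \ref{help3} asserts precisely that $L(Q^r) = A_{\C,s}^{Q^r}$ for every $r \in \mathbb{N}$, while Proposition \ref{help4} asserts that $R(Q^r) = A_{\C,s}^{Q^r}$ for every $r \in \mathbb{N}$ (recalling that $\eta_{\MM}^* = \eta_{\C}^*$ since here $\MM = \MC$). Consequently $L(Q^r) = R(Q^r)$ for all positive integers $r$.

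Next I would observe that, since $q \geq 2$, the integers $Q^r = q^{mr}$ for $r = 1, 2, 3, \ldots$ are pairwise distinct, so $\{Q^r : r \in \mathbb{N}\}$ is an infinite subset of $\mathbb{Z}$. The difference $L(X) - R(X) \in \mathbb{Z}[X]$ therefore vanishes at infinitely many points. As a nonzero polynomial over the integral domain $\mathbb{Z}$ has only finitely many roots, we conclude $L(X) - R(X) = 0$, i.e. $L(X) = R(X)$ as formal polynomials, which is exactly the claimed identity for each $s = 0, 1, \ldots, n$.

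I do not expect any serious obstacle here: the only point that needs checking is that $L$ and $R$ are genuine polynomials in $X$, that is, that the exponents $\nn_{Cl(\MM_U)}^*(\gamma)$ and $\eta_{\MM}^*(V)$ are nonnegative integers. This is immediate, the former because the nullity function of the classical matroid $Cl(\MM_U)^*$ is nonnegative integer-valued by (R1), and the latter because the nullity function of the $q$-matroid $\MM$ is nonnegative integer-valued by (P1). Everything else is already packaged in Corollary \ref{help3} and Proposition \ref{help4}, so the corollary is essentially a clean consequence of having computed the same quantity in two ways.
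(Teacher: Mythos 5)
Your proposal is correct and follows exactly the paper's own argument: both sides are recognized as the expressions for $A_{\C,s}^{\tilde{Q}}$ from Corollary \ref{help3} and Proposition \ref{help4}, so their difference vanishes at the infinitely many values $Q^r$ and must therefore be the zero polynomial. The extra remark on nonnegativity of the exponents is a harmless addition; nothing is missing.
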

 \begin{proof}
 These are both polynomials, and the
 difference between them has zeroes for $Q^r$, for infinitely many $r$. But any non-zero polynomial over any field (in this case $\mathbb{Q}$ or $\mathbb{R}$) has only finitely many zeroes. Hence the difference between the two polynomials appearing in the corollary is the zero polynomial.
 \end{proof}
 \begin{definition}
 For a Gabidulin rank-metric code $\C$, we use $A_{\C,s}(X)$ to denote the polynomial(s) in Corollary \ref{Zpolynomial}. This is called the $s$-th generalized rank weight polynomial of the code $\C$.
 \end{definition}

 \begin{definition}
  For any $q$-matroid $\mathcal{M}$, let $P_{\mathcal{M},s}(X)$ denote the polynomial appearing on the left side in Corollary \ref{Zpolynomial}. We call the $P_{\mathcal{M},s}(X)$ the $s$-th generalized rank weight polynomial of a $q$-matroid $\mathcal{M}$.
  \end{definition}
  \begin{remark}
 Note that $A_{\C,s}(\tilde{Q})=P_{\MM_{\C},s}(\tilde{Q}) =A^{\tilde{Q}}_{\C,s}$ for $\tilde{Q} = Q^r$ for any $r \in \mathbb{N}$ for any Gabidulin rank-metric code $\C$.
 \end{remark}

 
 
   From this result we obtain:
 \begin{corollary} \label{newd}
 For a Gabidulin rank-metric code we have:
 ${d}_i = \min \{s| \deg P_{\MM_{\C},s}(X) = i\}$,
 for $i=1,\cdots,k$, and  $P_{\MM_{\C},s}$ the $s$-th generalized rank weight polynomial of the $q$-matroid $\MC$.
 \end{corollary}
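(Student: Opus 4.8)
The plan is to compute the degree of $P_{\MM_{\C},s}(X)$ explicitly and to match it with the formula for $d_i$ coming from Proposition \ref{weights}. First I would note that, by the Remark following Corollary \ref{Zpolynomial}, the polynomial $P_{\MM_{\C},s}(X)$ agrees with $A^{Q^r}_{\C,s}$ at the infinitely many points $X=Q^r$, so it must coincide with the polynomial $P$ of Corollary \ref{old}; consequently $\deg P_{\MM_{\C},s}(X)=\max\{\dim_{\FQ}\C(U): U\subseteq\E,\ \dim_{\Fq}U=s\}$. Invoking Remark \ref{nuCstar}, i.e. $\eta_{\C}^*(U)=\dim_{\FQ}\C(U)$, I would set $g(s):=\max\{\eta_{\C}^*(U):\dim_{\Fq}U=s\}$, so that $\deg P_{\MM_{\C},s}(X)=g(s)$.

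Next, Proposition \ref{weights} gives $d_i=\min\{\dim_{\Fq}X:\eta_{\C}^*(X)\ge i\}$. Since a subspace of dimension $s$ with $\eta_{\C}^*(X)\ge i$ exists precisely when $g(s)\ge i$, this rewrites as $d_i=\min\{s:g(s)\ge i\}$. The statement to be proved is $d_i=\min\{s:g(s)=i\}$, so it suffices to show that the first index $s_0$ at which $g(s_0)\ge i$ in fact satisfies $g(s_0)=i$.

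To this end I would establish two monotonicity properties of $g$. Because $V\subseteq U$ implies $\C(V)\subseteq\C(U)$, the conullity $\eta_{\C}^*$ is order-preserving, whence $g$ is non-decreasing (extend a maximizing $U$ of dimension $s$ to one of dimension $s+1$). For the unit step, recall that $\eta_{\C}^*(X)=\dim_{\Fq}X-\rho_{\C}^*(X)$, where $\rho_{\C}^*$ is the rank function of the dual $q$-matroid $\MC^*$; applying (P3) to $X$ and a line $\langle e\rangle$ together with (P1) (so $\rho_{\C}^*(\langle e\rangle)\le 1$ and $\rho_{\C}^*(X\cap\langle e\rangle)\ge 0$) gives $\rho_{\C}^*(X\oplus\langle e\rangle)\le\rho_{\C}^*(X)+1$, so along any chain the conullity grows by $0$ or $1$. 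Hence for every $(s+1)$-dimensional $U'$ and any hyperplane $V\subset U'$ one has $\eta_{\C}^*(U')\le\eta_{\C}^*(V)+1\le g(s)+1$, giving $g(s+1)\le g(s)+1$.

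Finally, since $g(0)=\eta_{\C}^*(\{0\})=0$ and $g$ moves in unit steps, at the minimal $s_0$ with $g(s_0)\ge i\ge 1$ one has $g(s_0-1)\le i-1$ and therefore $i\le g(s_0)\le g(s_0-1)+1\le i$, i.e. $g(s_0)=i$; moreover $g(s)<i$ for every $s<s_0$. Thus $\min\{s:g(s)\ge i\}=\min\{s:g(s)=i\}$, which yields $d_i=\min\{s:\deg P_{\MM_{\C},s}(X)=i\}$. I expect the only delicate point to be the unit-increment estimate $g(s+1)\le g(s)+1$, which rests on the sub-unit growth of the dual rank function $\rho_{\C}^*$; the rest is bookkeeping with the monotone, integer-valued function $g$ running from $0$ to $k$.
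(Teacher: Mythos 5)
Your overall route is the same as the paper's: the paper's proof consists of the one-line observation that $d_i=\min\{\dim U \mid \eta_{\C}^*(U)= i\}$ (Definition \ref{dr}) should be ``compared with the right side of Corollary \ref{Zpolynomial}'', and you have made that comparison explicit via the monotone, unit-step function $g(s)=\max\{\eta_{\C}^*(U):\dim U=s\}$. Your reduction $d_i=\min\{s:g(s)\ge i\}=\min\{s:g(s)=i\}$, resting on $g(0)=0$, monotonicity of $\eta_{\C}^*$, and the sub-unit growth of $\rho_{\C}^*$ obtained from (P1)--(P3), is correct and fills in details the paper leaves implicit.

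The one step that does not hold as stated is the blanket identity $\deg P_{\MM_{\C},s}(X)=g(s)$. Corollary \ref{old}, which you invoke for it, is only proved in the paper as an upper bound (``all exponents occurring \dots are at most the maximum''), and the asserted equality genuinely fails in general: if $\C$ is one-dimensional, spanned by a word of rank weight $w<n$, then $g(s)=1$ for all $s\ge w$ while $P_{\MM_{\C},s}\equiv 0$ for every $s>w$, because the top-degree terms in the inclusion--exclusion cancel. What saves your argument (and the corollary) is that only the inequality $\deg P_{\MM_{\C},s}\le g(s)$ is needed for $s<d_i$, together with exact equality at the single value $s=d_i$. The latter does hold, but requires a separate observation: if $\dim U=d_i$ and $\eta_{\C}^*(U)=i$, then $W=Rsupp(\C(U))$ satisfies $\C(W)\supseteq\C(U)$, hence $\eta_{\C}^*(W)\ge i$, and minimality of $d_i$ forces $W=U$; consequently every hyperplane $V\subset U$ has $\eta_{\C}^*(V)\le i-1$, so
$A_{\C,U}^{\tilde{Q}}=|\tilde{\C}(U)|-|\bigcup_{V}\tilde{\C}(V)|\ge \tilde{Q}^{i}-\frac{q^{d_i}-1}{q-1}\,\tilde{Q}^{i-1}>0$
for $\tilde{Q}$ large. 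Since each $A_{\C,U}(X)$ takes nonnegative values at infinitely many $\tilde{Q}$ and has degree at most $i$, summing over all $U$ of dimension $d_i$ shows that the coefficient of $X^{i}$ in $P_{\MM_{\C},d_i}$ is strictly positive. You should insert this positivity argument (or restrict your appeal to Corollary \ref{old} to the upper bound plus this special case) to close the gap.
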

\begin{proof}
From Definition \ref{dr}, valid for any  $q$-matroid $\mathcal{M}$:  ${d}_i(\mathcal{M})= \min \{\dim~ U| \eta^*(U)=i\}$ for \textit{any} $q$-matroid $\mathcal{M}$. Compare with the right side of Corollory \ref{Zpolynomial}.
\end{proof}

 We now have:
 \begin{proposition} \label{upperpolynomial}
 For a $q$-matroid $\mathcal{M} = (\E=\Fq^n, \rho)$, let $N=Cl(\mathcal{M})^*$. Then we have:
  $$P_{\mathcal{M},n}(X)=\sum_{l=0}^{k}\sum_{i=0}^{k}(-1)^i({\beta^{(l)}_{i,\frac{q^n-1}{q-1}}}(N)-\beta^{(l-1)}_{i,\frac{q^n-1}{q-1}}(N))X^l=$$
  
   $$\sum_{l=0}^{k}\sum_{i=0}^{k}(-1)^i({\beta^{(l)}_{i,P(\E)}}(N)-\beta^{(l-1)}_{i,P(\E)}(N))X^l.$$
 \end{proposition}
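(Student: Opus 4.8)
The plan is to connect the polynomial $P_{\mathcal{M},n}(X)$ from Corollary \ref{Zpolynomial} (taken at $s=n$, where $U = \E$ and $P(U) = P(\E)$) to the Betti numbers of $N = Cl(\mathcal{M})^*$ and its elongations. Starting from the left-hand side of Corollary \ref{Zpolynomial} with $s=n$, we have
\[
P_{\mathcal{M},n}(X) = (-1)^{\frac{q^n-1}{q-1}}\sum_{\gamma \subseteq P(\E)}(-1)^{|\gamma|}X^{\nn^*_{Cl(\mathcal{M})}(\gamma)}.
\]
The first step is to reorganize this alternating sum by grouping the subsets $\gamma$ according to the value $l = \nn^*_{Cl(\mathcal{M})}(\gamma)$ of the nullity function of $N$, so that the coefficient of $X^l$ becomes $(-1)^{\frac{q^n-1}{q-1}}\sum_{\gamma : \nn_N(\gamma) = l}(-1)^{|\gamma|}$. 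The goal is then to identify this coefficient with $\sum_{i=0}^{k}(-1)^i(\beta^{(l)}_{i,P(\E)}(N) - \beta^{(l-1)}_{i,P(\E)}(N))$.

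The key device here is the standard relation between Betti numbers of a Stanley-Reisner ring and the reduced Euler characteristics (or alternating sums of faces) of the relevant simplicial complexes, together with the behaviour of cycles under elongation recorded in Definition \ref{def:trunc}: the cycles of $N^{(l)}$ are exactly the cycles of $N$ except those of nullity $1,\ldots,l$. First I would use Theorem \ref{thm:JV21} to recall that $\beta_{i,\sigma}(N^{(l)}) \neq 0$ exactly when $\sigma$ is a cycle of nullity $i$ for $N^{(l)}$, and then relate the full-support Betti numbers $\beta_{i,P(\E)}(N^{(l)})$ (i.e. those with $\sigma = P(\E)$, the top degree) to the homology of the full independence complex. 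The crucial observation is that $\sum_{i}(-1)^i \beta_{i,P(\E)}(N^{(l)})$ computes, via the alternating sum over a minimal free resolution in the top $\mathbb{N}^{|E|}$-degree, precisely the coefficient $\sum_{\gamma \subseteq P(\E)}(-1)^{|P(\E)|-|\gamma|}$ of those $\gamma$ whose nullity in $N$ exceeds $l$, i.e. those $\gamma$ surviving as having nonzero nullity after the $l$-th elongation. Taking the difference between consecutive elongation levels $l$ and $l-1$ isolates exactly the $\gamma$ with $\nn_N(\gamma) = l$, which is what produces the telescoping $\beta^{(l)} - \beta^{(l-1)}$ structure in the claimed formula.

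Concretely, I would prove the master identity
\[
\sum_{i=0}^{k}(-1)^i \beta^{(l)}_{i,P(\E)}(N) = (-1)^{\frac{q^n-1}{q-1}}\sum_{\substack{\gamma \subseteq P(\E)\\ \nn_N(\gamma) > l}}(-1)^{|\gamma|} + (\text{constant term from } \gamma = P(\E)),
\]
by expressing the top-degree strand of the minimal resolution of the Stanley-Reisner ring of $N^{(l)}$ as a reduced Euler characteristic of the restriction of the independence complex to $P(\E)$, and invoking the standard Hochster-type formula. Subtracting the identity for $l-1$ from that for $l$ then eliminates all contributions from $\gamma$ with $\nn_N(\gamma) > l$ except precisely those with $\nn_N(\gamma) = l$, giving the coefficient of $X^l$ in $P_{\mathcal{M},n}(X)$. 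Finally, since $\frac{q^n-1}{q-1} = |P(\E)|$, the two displayed forms of the result are literally the same statement written with $\frac{q^n-1}{q-1}$ and with $P(\E)$ as the top $\mathbb{N}^{|E|}$-degree index, so the equality of the two right-hand sides is immediate.

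The main obstacle I anticipate is making the passage between the $\mathbb{N}^{|E|}$-graded top-degree Betti numbers $\beta_{i,P(\E)}$ and the combinatorial alternating sum $\sum_\gamma (-1)^{|\gamma|}$ fully rigorous, in particular correctly tracking the sign $(-1)^{\frac{q^n-1}{q-1}}$ and the homological shift, and verifying that the elongation operation interacts with the resolution exactly as the cycle description in Definition \ref{def:trunc} predicts. The cleanest route is probably to avoid recomputing homology from scratch by instead citing the known formula expressing these Betti numbers through Möbius functions of the lattice of flats (as indicated in the introduction via \cite{S} and \cite{JV21}), and then checking that the telescoping of Euler characteristics across elongation levels matches the nullity-grouping of the alternating sum; the bookkeeping of which $\gamma$ contribute at level $l$ versus $l-1$ is where care is most needed.
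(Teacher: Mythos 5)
Your strategy --- start from Corollary \ref{Zpolynomial} at $s=n$, group the subsets $\gamma \subseteq P(\E)$ by the nullity $\nn_N(\gamma)$, identify the alternating sum $\sum_i(-1)^i\beta^{(l)}_{i,P(\E)}(N)$ with a signed subset count via Hochster's formula and the top-concentration of homology for matroid complexes, and then telescope over $l$ --- is precisely the content of the result the paper actually invokes: its proof of this proposition is a one-line appeal to \cite[Theorem 5.1]{JRV16}. So you are re-deriving the cited theorem rather than taking a genuinely different route, which is legitimate in principle.

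However, your ``master identity'' is wrong as stated, and the error propagates to the final formula. Since the independent sets of the elongation $N^{(l)}$ are exactly the $\gamma$ with $\nn_N(\gamma)\le l$, the Euler-characteristic computation gives
\[
\sum_{i=0}^{k}(-1)^i\beta^{(l)}_{i,P(\E)}(N)=(-1)^{|P(\E)|}\sum_{\substack{\gamma\subseteq P(\E)\\ \nn_N(\gamma)\le l}}(-1)^{|\gamma|},
\]
with no additional constant term; the telescoping difference between levels $l$ and $l-1$ then isolates $\{\gamma:\nn_N(\gamma)=l\}$ with the correct sign. You instead sum over the $\gamma$ with $\nn_N(\gamma)>l$ (``those surviving as having nonzero nullity after the $l$-th elongation''), which is the complementary family. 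Because $\sum_{\gamma\subseteq P(\E)}(-1)^{|\gamma|}=0$, your sum is exactly the negative of the correct one, so your telescoping produces $-(-1)^{|P(\E)|}\sum_{\nn_N(\gamma)=l}(-1)^{|\gamma|}$ as the coefficient of $X^l$, i.e.\ $-P_{\mathcal{M},n}(X)$ overall; and no $l$-independent ``constant term from $\gamma=P(\E)$'' can repair this, since the required correction would have to vary with $l$. Replacing the condition $\nn_N(\gamma)>l$ by $\nn_N(\gamma)\le l$ (equivalently, summing over the independent sets of $N^{(l)}$) fixes the argument; your final remark that the two displayed forms coincide because $P(\E)$ is the unique subset of cardinality $\frac{q^n-1}{q-1}$ is correct.
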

 \begin{proof}
 This is a special case of \cite[Theorem 5.1]{JRV16} which (in the relevant case) says that for any classical matroid $M=(E,\mathbf{r})$ of rank $k$:
 the polynomial
 $$(-1)^{|E|}\sum_{\gamma \subset E}(-1)^{|\gamma|}
 X^{\nn_{M^*}(\gamma)}.$$
 is equal to
 $$\sum_{l=0}^{k}\sum_{i=0}^{k}(-1)^i({\beta^{(l)}_{i,E}(M^*)-\beta^{(l-1)}_{i,E}}(M^*))X^l.$$
Comparing with the left version of $P_{\mathcal{M},n}$ in Corollary \ref{Zpolynomial}, or the expression in Proposition \ref{help2}, one obtains the result.
 \end{proof}
 Let $U$ be a cycle of dimension  $s \in \{1,\cdots,n\}$ for a $q$-matroid $\mathcal{M}$.
 Then Proposition \ref{upperpolynomial} immediately gives:
 $$P_U(X)= \sum_{l=0}^{k_U}\sum_{i=0}^{k_U}(-1)^i(\beta^{(l)}_{i,P(U)}(N_U)-\beta^{(l-1)}_{i,P(U)}(N_U))X^l,$$ 
 where $\beta^{(l)}_{i,P(U)}(N_U)$ and $\beta^{(l-1)}_{i,P(U)}(N_U)$ refer to Betti numbers of the classical matroid
 $N_U=Cl(\mathcal{M}_U)^*$, and $k_U=\dim~ M_U$.
 Hence we obtain that $P_{\mathcal{M},s}(X)$ is the sum of all such expressions for all $U$ of dimension $s$. We recall that for a Gabidulin rank-metric code $\C$ then  $P_U(\tilde{Q})$ is the number of codewords of rank support $U$ in $\tilde{\C}$.
 We would like to relate the Betti numbers appearing in 
 these expressions, and which refer to different matroids
 $(Cl(M_{U}))^*$ to Betti numbers of one \textit{single} matroid $Cl(\mathcal{M})^*$ (which $P_{\mathcal{M},n}(X)$) already does, but none of the other $P_{\mathcal{M},s}$ so far). To remedy this lack of simplicity we refer to the Corollary \ref{cor:betti} below, given in \cite[Corollary 2]{JV21}, and using the exposition on p. 59 in  \cite{S77}. But first we recall the following classical definition:
 
 \begin{definition}
Let $L$ be a lattice. The M{\"o}bius function $\mu_L(a,b)$ is defined recursively by $\mu_L(a,a)=1$, and $\mu(a,b)=-\Sigma_{a \le c < b} \mu(a,c)$.
\end{definition}
 \begin{corollary}\cite[Corollary 21]{JV21}\label{cor:betti}
For a matroid $M=(E,\mathbf{r})$ and a subset $X \subset E$ we have \[\beta_{\nn(X),X} = (-1)^{\nn(X)}\mu_{L_F(M^*)}(E\backslash{X},E) = (-1)^{\nn(X)}\mu_{L_C(M)}(\emptyset,X),\]
where $L_F(M^*)$ and $L_C(M)$ refer to the lattices of flats of $M^*$ and cycles of $M$, respectively.
\end{corollary}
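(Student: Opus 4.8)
The plan is to establish the left-hand equality $\beta_{\nn(X),X}=(-1)^{\nn(X)}\mu_{L_C(M)}(\emptyset,X)$ through the Stanley--Reisner resolution and Hochster's formula, and to obtain the right-hand equality essentially for free from matroid duality. Throughout I may assume $X$ is a cycle of $M$: otherwise $\beta_{\nn(X),X}=0$ by Theorem~\ref{thm:JV21}, while $X\notin L_C(M)$ (equivalently $E\setminus X$ is not a flat of $M^*$), so both M\"obius numbers are understood to vanish as well.

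First I would settle the identity $\mu_{L_C(M)}(\emptyset,X)=\mu_{L_F(M^*)}(E\setminus X,E)$. By the flat--cycle duality recorded after Definition~\ref{def:dual_matroid}, the assignment $C\mapsto E\setminus C$ is an inclusion-reversing bijection carrying the cycles of $M$ contained in $X$ onto the flats of $M^*$ containing $E\setminus X$, and sending $\emptyset\mapsto E$ and $X\mapsto E\setminus X$. It therefore restricts to an anti-isomorphism between the interval $[\emptyset,X]$ of $L_C(M)$ and the interval $[E\setminus X,E]$ of $L_F(M^*)$. Since the M\"obius function of a finite poset is unchanged under order reversal, the two numbers coincide. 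This part is routine.

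For the main equality I would invoke Hochster's formula (see, e.g., \cite{Herzog11}) for the $\mathbb{N}^{|E|}$-graded Betti numbers of $S_M=S/I_M$: in the squarefree degree $X$,
\[\beta_{\nn(X),X}(M)=\dim_{\mathbb{K}}\tilde{H}_{|X|-\nn(X)-1}\bigl(\Delta_M|_X;\mathbb{K}\bigr)=\dim_{\mathbb{K}}\tilde{H}_{\r(X)-1}\bigl(\Delta_{M|_X};\mathbb{K}\bigr),\]
where $\Delta_M|_X$ is the induced subcomplex on $X$, i.e.\ the independence complex of the restriction $M|_X$. Because matroid independence complexes are Cohen--Macaulay of dimension one less than their rank, the reduced homology of $\Delta_{M|_X}$ is concentrated in the top degree $\r(X)-1$; expanding the reduced Euler characteristic over the faces (the independent subsets of $X$) then gives
\[\beta_{\nn(X),X}(M)=(-1)^{\r(X)-1}\tilde{\chi}(\Delta_{M|_X})=(-1)^{\r(X)}\sum_{\substack{\tau\subseteq X\\ \tau\ \text{independent}}}(-1)^{|\tau|}.\]
The assumption that $X$ is a cycle guarantees that $M|_X$ has no coloop, which is exactly what makes this top homology possibly nonzero, in agreement with Theorem~\ref{thm:JV21}.

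It remains to recognise the alternating sum as a M\"obius number, and here I would cross over to the dual matroid. Setting $F=E\setminus X$ (a flat of $M^*$ by the duality above) and substituting $\tau=X\setminus\sigma$, a set $\tau\subseteq X$ is independent in $M$ if and only if its complement $E\setminus\tau=F\cup\sigma$ is spanning in $M^*$; hence
\[\sum_{\substack{\tau\subseteq X\\ \tau\ \text{independent}}}(-1)^{|\tau|}=(-1)^{|X|}\sum_{\substack{\sigma\subseteq X\\ F\cup\sigma\ \text{spanning in}\ M^*}}(-1)^{|\sigma|}.\]
The sets $\sigma\subseteq X$ for which $F\cup\sigma$ is spanning in $M^*$ are precisely the spanning sets of the contraction $M^*/F$, whose lattice of flats is the interval $[F,E]$ in $L_F(M^*)$ and which is loopless since $F$ is a flat of $M^*$. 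The classical Whitney, or crosscut, identity (the computation on p.~59 of \cite{S77}) evaluates the signed count of spanning sets of a loopless matroid as the M\"obius number from $\hat 0$ to $\hat 1$ of its lattice of flats, so the inner sum equals $\mu_{L_F(M^*)}(F,E)=\mu_{L_C(M)}(\emptyset,X)$. Combining the last three displays and using $|X|=\r(X)+\nn(X)$ to collapse the signs yields $\beta_{\nn(X),X}(M)=(-1)^{\nn(X)}\mu_{L_C(M)}(\emptyset,X)$, as desired. The main obstacle is exactly this last bridge from the topological side (the Euler characteristic of the independence complex) to the order-theoretic side; the complement bijection to spanning sets together with the crosscut identity is what carries it, the only delicate points being the sign bookkeeping and the observation---supplied by the flat--cycle duality---that $F$ is a flat, so that $M^*/F$ is loopless and no correction terms appear.
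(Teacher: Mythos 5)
The paper never proves this statement: it is imported verbatim from \cite{JV21} (with a pointer to the computation on p.~59 of \cite{S77}), so there is no in-paper argument to compare yours against. Your proof is correct and is essentially the derivation those references rely on: Hochster's formula together with the Cohen--Macaulayness of matroid independence complexes reduces $\beta_{\nn(X),X}$ to the signed count $(-1)^{\r(X)}\sum_{\tau\subseteq X \text{ independent}}(-1)^{|\tau|}$, and the passage from independent sets of $M|_X$ to spanning sets of the loopless contraction $M^*/(E\setminus X)$, followed by the Whitney/crosscut identity, converts that count into $\mu_{L_F(M^*)}(E\setminus X,E)$. The sign bookkeeping via $|X|=\r(X)+\nn(X)$, the anti-isomorphism $C\mapsto E\setminus C$ between the interval $[\emptyset,X]$ of $L_C(M)$ and $[E\setminus X,E]$ of $L_F(M^*)$, and the degenerate case where $X$ is not a cycle (both sides vanishing, the Betti number by Theorem~\ref{thm:JV21}) are all handled correctly.
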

 Since we know that $\beta_{i,X}=0$ for all $i$ different from the nullity $\nn(X)$, Corollary \ref{cor:betti} implies that the
Betti numbers of a classical matroid, and also of all of its elongation matroids, are entirely determined by the lattice of cycles of the matroid. This makes the following result important:

\begin{proposition} \label{compare4}
Let $\mathcal{M}$ be a $q$-matroid on $\E=\Fq^n$ and $U$ be a $q$-cycle of the dual $q$-matroid $\mathcal{M}^*$. 
Then the following $4$ lattices are isomorphic
\begin{itemize}
\item The sublattice of $q$-cycles of $\mathcal{M}^*$ contained in  $U$.
\item  The lattice of $q$-cycles of $\mathcal{M}_U^*$.
\item The lattice of cycles of $Cl(\mathcal{M}_{U})^*$.
\item The sublattice of cycles of $Cl(\mathcal{M})^*$ contained in
the cycle $R(U)=P(\E)- P(U^{\perp}).$ 
\end{itemize}
\end{proposition}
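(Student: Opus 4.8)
The plan is to establish the four isomorphisms by showing each adjacent pair in the list is isomorphic, chaining them together. The four lattices fall naturally into two groups: the first two are lattices of $q$-cycles (living in $\Sigma(\E)$), and the last two are lattices of classical cycles (living in subsets of $P(\E)$). The bridge between the two worlds is Proposition \ref{compare2}(b), which identifies $q$-cycles of a $q$-matroid's dual with classical cycles of the dual classical matroid. So I would prove: (1) $\cong$ (2), then (2) $\cong$ (3), then (3) $\cong$ (4), all as posets ordered by inclusion, which suffices since each is already known to be a lattice.

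First I would handle (1) $\cong$ (2). By Lemma \ref{same} (via Definition \ref{identify}), the $q$-matroid $\MM_U$ has conullity function $\eta_U^*$ agreeing with $\eta^*$ on all subspaces $V \subseteq U$. Hence a subspace $V \subseteq U$ is a $q$-cycle of $\mathcal{M}_U^*$ (minimal of its nullity \emph{within} $U$) precisely when it is a $q$-cycle of $\mathcal{M}^*$ contained in $U$; the point is that minimality among subspaces of $U$ coincides with minimality among all subspaces of $\E$, because every competitor of smaller dimension already lies inside $U$. The inclusion order is manifestly preserved, so (1) $\cong$ (2). Next, (2) $\cong$ (3) is exactly Proposition \ref{compare2}(b) applied to the $q$-matroid $\mathcal{M}_U$ in place of $\mathcal{M}$: the poset of $q$-cycles of $\mathcal{M}_U^*$ is isomorphic to the poset of cycles of $Cl(\mathcal{M}_U)^*$, via the correspondence $V \mapsto P(U) \setminus P(V^{\perp_U})$ arising from Lemma \ref{transfer} and duality.

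For (3) $\cong$ (4), the idea is to map a cycle of $Cl(\mathcal{M}_U)^*$, which is a complement (inside $P(U)$) of some flat of $Cl(\mathcal{M}_U)$, to the corresponding cycle of $Cl(\mathcal{M})^*$ sitting inside $R(U) = P(\E) - P(U^{\perp})$. Concretely, by Lemma \ref{transfer} the flats of $Cl(\mathcal{M}_U)$ are the $P(F)$ for $q$-flats $F$ of $\mathcal{M}_U$, and by Lemma \ref{fundamental} these correspond to $q$-cycles $F^{\perp}$ of $\mathcal{M}^*$; the associated classical cycle in $Cl(\mathcal{M})^*$ is $P(\E) - P(F^{\perp})$, and one checks this lands inside $R(U)$ exactly when the underlying $q$-cycle is contained in $U$. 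I would verify that this assignment is a well-defined inclusion-preserving bijection onto the sublattice of cycles of $Cl(\mathcal{M})^*$ contained in $R(U)$, using that $|R(U)|$ and the cardinalities computed in Corollary \ref{closingin}(a) match up so that no cycle is omitted.

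The main obstacle I anticipate is \textbf{the careful bookkeeping of the two different orthogonal complements} — the complement $\perp$ taken in all of $\E$ versus the complement $\perp_U$ taken inside $U$ — and verifying that the containment ``$q$-cycle $\subseteq U$'' translates correctly into ``classical cycle $\subseteq R(U)$'' under the projectivization-and-complement dictionary. Getting the set $R(U) = P(\E) - P(U^{\perp})$ right (rather than, say, $P(U)$ or its complement) is the delicate point: a $q$-cycle $V \subseteq U$ of $\mathcal{M}^*$ corresponds in $Cl(\mathcal{M})^*$ to a cycle of cardinality $[\dim V]$ that is $P(\E)$ minus the projectivization of the matching $q$-flat $V^{\perp}$ of $\mathcal{M}$, and one must confirm these all lie in $R(U)$ and exhaust the cycles contained there. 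Once the dimension/cardinality accounting via Corollary \ref{closingin} is in place, each individual isomorphism is routine, and transitivity of $\cong$ finishes the proof.
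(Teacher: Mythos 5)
Your proposal is correct and follows essentially the same route as the paper: the first two lattices coincide by Definition \ref{identify}, the second and third are isomorphic by Proposition \ref{compare2}(b), and the fourth is linked in via the correspondence $V \mapsto R(V)=P(\E)-P(V^{\perp})$. The only cosmetic difference is that the paper attaches the fourth lattice to the first (avoiding the internal complement $\perp_U$ bookkeeping you flag), whereas you attach it to the third; both chains close in the same way.
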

\begin{proof}
From Definition \ref{identify}, it follows directly that the two first lattices are identical. The second and third lattices are isomorphic, by Proposition \ref{compare2}(b). Thus it is sufficient to show that the fourth lattice is isomorphic to any of the three lattices above.

Note that, since $U$ is a $q$-cycle of $\mathcal{M}^*$, then $R(U)=P(\E)- P(U^{\perp})$ is a cycle of $Cl(\mathcal{M})^*$. Furthermore, if $V,W$ are $q$-cycles contained in the $q$-cycle $U$, then  $V \subseteq W$ if and only if $P(V) \subseteq P(W)$ if and only if $R(V) \subseteq R(W)$). Hence the first and fourth lattices are isomorphic and we get the desired result.
\end{proof}
By Corollary \ref{cor:betti} and Proposition \ref{compare4} the M{\"o}bius functions of all these lattices can be expressed by Betti numbers relating to the single classical matroid $N=Cl(\mathcal{M}_{\C}^*).$ 
In particular the isomorphism between the third and fourth lattices above gives that 
$$\beta^{(l)}_{i,P(U)}(N_U)=\beta^{(l)}_{i,R(U)}(N|_{R(U)})= \beta^{(l)}_{i,R(U)}(N).$$
For the rightmost equality we have used that in general $\beta_{i,\sigma}(M)=
\beta_{i,\sigma}(M|_{\sigma})$ for a matroid $M$. We then obtain:
\begin{proposition} \label{submain} Given a $q$-matroid $\mathcal{M}$. Then
 $$P_{\mathcal{M},s}(X)= \sum_{\dim U =s}\sum_{l=0}^k \sum_{i=0}^{k}(-1)^i(\beta^{(l)}_{i,R(U)}(N)-\beta^{(l-1)}_{i,R(U)})(N))X^l,$$
 for the classical matroid $N=Cl(\mathcal{M})^*$.
\end{proposition}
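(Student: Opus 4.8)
The plan is to assemble Proposition \ref{submain} from ingredients already established, treating it as a bookkeeping consolidation rather than a fresh computation. The starting point is the per-cycle polynomial expansion recorded just before the statement: for each subspace $U$ with $\dim U = s$, one has
\[
P_U(X)= \sum_{l=0}^{k_U}\sum_{i=0}^{k_U}(-1)^i\bigl(\beta^{(l)}_{i,P(U)}(N_U)-\beta^{(l-1)}_{i,P(U)}(N_U)\bigr)X^l,
\]
where $N_U=Cl(\mathcal{M}_U)^*$. First I would recall that $P_{\mathcal{M},s}(X)=\sum_{\dim U=s}P_U(X)$, which follows from the fact (noted after Proposition \ref{upperpolynomial}) that $P_{\mathcal{M},s}$ is obtained by summing the contribution of each $U$ of dimension $s$, matching the outer sum over $U$ on the left-hand side of Corollary \ref{Zpolynomial}.

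The substance of the argument is the translation of the Betti numbers $\beta^{(l)}_{i,P(U)}(N_U)$, which live on the various distinct matroids $N_U$, into Betti numbers $\beta^{(l)}_{i,R(U)}(N)$ of the single matroid $N=Cl(\mathcal{M})^*$. This is precisely the identity displayed immediately before the proposition, namely
\[
\beta^{(l)}_{i,P(U)}(N_U)=\beta^{(l)}_{i,R(U)}(N|_{R(U)})=\beta^{(l)}_{i,R(U)}(N),
\]
whose justification rests on Proposition \ref{compare4} (the isomorphism of the third and fourth lattices, combined with Corollary \ref{cor:betti} so that the relevant Betti numbers are determined by the lattice of cycles) together with the general restriction fact $\beta_{i,\sigma}(M)=\beta_{i,\sigma}(M|_\sigma)$. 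So the core of the proof is simply to invoke Proposition \ref{compare4} and Corollary \ref{cor:betti}, substitute the resulting equality into each $P_U(X)$, and then sum over all $U$ of dimension $s$.

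I would then address the cosmetic discrepancy in the summation ranges: the per-cycle formula runs $i,l$ up to $k_U=\dim M_U$, whereas the target runs up to $k=\mathrm{rank}(\mathcal{M})$. This is harmless because the Betti numbers $\beta^{(l)}_{i,R(U)}(N)$ vanish for $i$ exceeding the nullity of $R(U)$ and for $l$ beyond the relevant elongation range, so extending both sums up to $k$ merely appends zero terms and changes nothing. After making that remark, the substitution gives exactly
\[
P_{\mathcal{M},s}(X)=\sum_{\dim U=s}\sum_{l=0}^{k}\sum_{i=0}^{k}(-1)^i\bigl(\beta^{(l)}_{i,R(U)}(N)-\beta^{(l-1)}_{i,R(U)}(N)\bigr)X^l,
\]
as claimed.

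The main obstacle, such as it is, lies not in any calculation but in verifying that the cited lattice isomorphism of Proposition \ref{compare4} genuinely licenses the equality of the two families of \emph{elongation} Betti numbers $\beta^{(l)}$ (not merely the base Betti numbers $\beta^{(0)}$). I expect this to follow because Corollary \ref{cor:betti} expresses each $\beta_{\mathrm{n}(X),X}$ via the Möbius function of the cycle lattice, and the truncations/sublattices governing the elongations are themselves determined by that same lattice; hence an isomorphism of cycle lattices carries over to all elongations simultaneously. I would state this dependence explicitly rather than leave it implicit, since it is the only genuinely non-formal step in an otherwise mechanical assembly.
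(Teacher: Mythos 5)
Your proposal is correct and follows essentially the same route as the paper: the paper likewise sums the per-subspace expansions $P_U(X)$ over all $U$ of dimension $s$ and substitutes the identity $\beta^{(l)}_{i,P(U)}(N_U)=\beta^{(l)}_{i,R(U)}(N|_{R(U)})=\beta^{(l)}_{i,R(U)}(N)$, justified by Proposition \ref{compare4} together with Corollary \ref{cor:betti} (the paper notes right after that corollary that the Betti numbers of a matroid \emph{and of all its elongations} are determined by its lattice of cycles, which settles the point you flag about the $\beta^{(l)}$). Your extra remark on extending the summation ranges from $k_U$ to $k$ by appending zero terms is a detail the paper leaves implicit, but it does not change the argument.
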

Since we know that there are no other cycles of $Cl(\mathcal{M})^*$ than those of the form $R(U)$, we also know that
$\beta^{(l-1)}_{i,X}=0$ for all $X \subseteq P(\E)$ not of this form.
Hence we obtain
\begin{theorem} \label{main}
 Let $\mathcal{M}$ be a $q$-matroid. Then the $s$-th generalized weight polynomial of $\mathcal{M}$ is given by $$P_{\mathcal{M},s}(X)=\sum_{l=0}^{k} \sum_{i=0}^{k}(-1)^i(\beta^{(l)}_{i,[s]}(N)-\beta^{(l-1)}_{i,[s]}(N))X^l,$$
 where $N=Cl(\mathcal{M})^*$, the dual classical matroid corresponding to $\mathcal{M}$. Consequently, the number of codewords of rank weight $s$ in $\tilde{\C}$ is   $$A_{\C,s}(\tilde{Q})=\sum_{l=0}^{k} \sum_{i=0}^{k}(-1)^i(\beta^{(l)}_{i,[s]}(N)-\beta^{(l-1)}_{i,[s]}(N))\tilde{Q}^l.$$
 
\end{theorem}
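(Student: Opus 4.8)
The plan is to derive the formula in Theorem \ref{main} by combining Proposition \ref{submain} with the structural fact about cycles of $Cl(\mathcal{M})^*$ established in Corollary \ref{closingin}(a). Proposition \ref{submain} already expresses $P_{\mathcal{M},s}(X)$ as a sum over all subspaces $U$ of dimension $s$, with each summand involving the $\mathbb{N}^{|E|}$-graded Betti numbers $\beta^{(l)}_{i,R(U)}(N)$ indexed by the specific subset $R(U)=P(\E)-P(U^\perp)$. So the core task is to pass from these finely-graded (set-indexed) Betti numbers to the coarser $\mathbb{N}$-graded Betti numbers $\beta^{(l)}_{i,[s]}(N)$ indexed only by cardinality.

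First I would recall, from Corollary \ref{closingin}(a), that every cycle of $Cl(\mathcal{M})^*$ is of the form $R(U)$ and has cardinality exactly $[s]=q^{n-1}+\cdots+q^{n-s}$ precisely when $\dim U = s$ (equivalently, $U^\perp$ has dimension $n-s$, so $P(U^\perp)$ has cardinality $\frac{q^{n-s}-1}{q-1}$ and its complement has cardinality $[s]$). Next I would invoke Theorem \ref{thm:JV21} together with the remark following the definition of $\mathbb{N}$-graded Betti numbers, namely $\beta_{i,j}=\sum_{|\sigma|=j}\beta_{i,\sigma}$: since $\beta^{(l)}_{i,\sigma}(N)$ is nonzero only when $\sigma$ is a cycle of the elongation $N^{(l)}$, and all such cycles of cardinality $[s]$ are exactly the sets $R(U)$ with $\dim U = s$, summing the set-graded Betti numbers over all $U$ of dimension $s$ yields precisely the $\mathbb{N}$-graded Betti number $\beta^{(l)}_{i,[s]}(N)$. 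That is,
\[
\sum_{\dim U = s}\beta^{(l)}_{i,R(U)}(N)=\sum_{|\sigma|=[s]}\beta^{(l)}_{i,\sigma}(N)=\beta^{(l)}_{i,[s]}(N),
\]
and the analogous identity holds for the $(l-1)$-elongation. Substituting these two identities into the expression of Proposition \ref{submain} and interchanging the (finite) order of summation then collapses the outer sum over $U$ into the single aggregated Betti number, giving the stated formula for $P_{\mathcal{M},s}(X)$.

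The main subtlety to handle carefully is the bookkeeping of which cardinalities $[s]$ can arise as cycles in the various elongations $N^{(l)}$: one must check that the only subsets $\sigma$ of cardinality $[s]$ carrying nonzero Betti numbers are genuinely the $R(U)$, using Theorem \ref{betti}(b) to rule out contributions from cardinalities not of the form $[j]$, and using the description of cycles of elongations (the remark after Definition \ref{def:trunc}, that the cycles of $N^{(l)}$ are those of $N$ of nullity exceeding $l$) to confirm that no spurious cycles of cardinality $[s]$ appear. Once this correspondence is secured, the final statement about $A_{\C,s}(\tilde{Q})$ is immediate: we simply evaluate the formal polynomial $P_{\mathcal{M},s}(X)$ at $X=\tilde{Q}$ for the specialization $\mathcal{M}=\MC$, using the identity $A_{\C,s}(\tilde{Q})=P_{\MM_{\C},s}(\tilde{Q})=A^{\tilde{Q}}_{\C,s}$ recorded in the remark following Corollary \ref{Zpolynomial}.
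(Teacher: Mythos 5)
Your proposal is correct and follows essentially the same route as the paper: the paper also derives Theorem \ref{main} directly from Proposition \ref{submain} by observing that the only subsets of $P(\E)$ carrying nonzero (elongated) Betti numbers are cycles, that all such cycles are of the form $R(U)$, and that $|R(U)|=[s]$ exactly when $\dim U=s$, so the sum over $U$ of the set-graded Betti numbers collapses to the $\mathbb{N}$-graded Betti number $\beta^{(l)}_{i,[s]}(N)$. Your write-up merely makes explicit the bookkeeping (via Theorem \ref{thm:JV21} and the identity $\beta_{i,j}=\sum_{|\sigma|=j}\beta_{i,\sigma}$) that the paper leaves implicit.
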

 
We recall the definition: 
 ${d}_i(\mathcal{M})= \min \{\dim~ U| \eta^*(U)=i\})$,
 valid for all $q$-matroids $\mathcal{M}$.
 From Proposition \ref{compare4} we also obtain the following generalization of Corollary \ref{Zpolynomial}:
 \begin{corollary} \label{c1}
For any  $q$-matroid $\mathcal{M}$ we have:  ${d}_i(\mathcal{M}) = \min \{s| \deg P_{\mathcal{M},s}(X) = i\}.$
\end{corollary}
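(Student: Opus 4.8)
The plan is to derive Corollary \ref{c1} as a direct consequence of Theorem \ref{main} together with the generalized-weight characterization established earlier. Recall from Definition \ref{dr} that for any $q$-matroid $\mathcal{M}$ we have ${d}_i(\mathcal{M})= \min\{\dim_{\Fq} U : \eta^*(U) \ge i\}$, where $\eta^*$ is the nullity function of $\mathcal{M}^*$. The key link is that the \emph{degree} of the polynomial $P_{\mathcal{M},s}(X)$ records exactly the largest conullity $\eta^*(V)$ over subspaces $V$ of dimension $s$, as one sees from the right-hand (M\"obius-inversion) expression in Corollary \ref{Zpolynomial}.

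First I would pin down $\deg P_{\mathcal{M},s}(X)$ precisely. Looking at the right side of Corollary \ref{Zpolynomial}, the polynomial is a $\mathbb{Z}$-linear combination of monomials $X^{\eta^*_{\mathcal{M}}(V)}$ as $V$ ranges over subspaces of the $U$'s with $\dim U = s$. The highest exponent that can occur is $\max\{\eta^*(V) : \dim V \le s\}$, which (since $\eta^*$ is monotone non-decreasing in the subspace) equals $\max\{\eta^*(U) : \dim U = s\}$. The argument of Corollary \ref{old} already establishes that this maximum is the actual degree, i.e.\ there is no cancellation of the top-degree term: the leading coefficient is governed by the unique top-dimensional contributions and is nonzero. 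Thus $\deg P_{\mathcal{M},s}(X) = \max\{\eta^*(U) : \dim_{\Fq} U = s\}$.

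With this identity the corollary becomes a clean monotonicity statement. Since $\eta^*$ is non-decreasing, the quantity $f(s) := \max\{\eta^*(U) : \dim U = s\}$ is a non-decreasing step function of $s$ taking integer values, and $f(s) \ge i$ precisely when there exists a subspace $U$ of dimension $s$ with $\eta^*(U) \ge i$. Therefore
\[
\min\{s : \deg P_{\mathcal{M},s}(X) = i\} = \min\{s : f(s) = i\} = \min\{\dim U : \eta^*(U) \ge i,\ \eta^*(U) \text{ attained as max at } s\},
\]
and I would match this to $\min\{\dim U : \eta^*(U) \ge i\} = {d}_i(\mathcal{M})$ by noting that the first dimension $s$ at which some subspace attains conullity $\ge i$ is exactly the first $s$ at which the \emph{maximum} conullity reaches the value $i$, using that $f$ increases by steps.

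The main obstacle is the subtle point that $\min\{s : f(s) = i\}$ must coincide with $\min\{s : f(s) \ge i\}$; these agree only because $f$ is non-decreasing and, as one moves from $s$ to $s+1$ by enlarging a subspace, the maximal conullity cannot jump past the value $i$ in a way that skips it as an attained degree. I would justify this by observing that if some $U_0$ with $\dim U_0 = s_0$ has $\eta^*(U_0) \ge i$ and $s_0$ is minimal with this property, then for every $s < s_0$ all subspaces have conullity $< i$, so $\deg P_{\mathcal{M},s} = f(s) < i$ for $s < s_0$; and at $s = s_0$ the maximum conullity is at least $i$, while it cannot exceed $i$ at the \emph{minimal} such $s_0$ (otherwise a codimension-one subspace of the maximizer would already achieve conullity $\ge i$ in dimension $s_0 - 1$, contradicting minimality). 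Hence $f(s_0) = i$ exactly, giving $\min\{s : \deg P_{\mathcal{M},s} = i\} = s_0 = {d}_i(\mathcal{M})$, which is the claim.
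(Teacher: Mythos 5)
Your argument follows the same overall strategy as the paper's proof: both reduce the claim to identifying $\deg P_{\mathcal{M},s}(X)$ with the maximal conullity $\eta^*$ attained on $s$-dimensional subspaces, and then use the unit-step behaviour of $\eta^*$ to match $\min\{s \mid \deg P_{\mathcal{M},s}(X)=i\}$ with $d_i(\mathcal{M})=\min\{\dim U \mid \eta^*(U)\ge i\}$ from Definition \ref{dr}. The paper works from the left-hand expression in Corollary \ref{Zpolynomial} (nullities of $Cl(\mathcal{M}_U)^*$) and passes through the lattice isomorphism of Proposition \ref{compare4}, whereas you work directly from the right-hand M\"obius-inversion expression in terms of $\eta^*_{\mathcal{M}}$; the two are interchangeable by Corollary \ref{Zpolynomial}, and your route is if anything more direct. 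Your explicit treatment of why $\min\{s \mid f(s)=i\}$ coincides with $\min\{s\mid f(s)\ge i\}$, via codimension-one subspaces, is correct and is spelled out more carefully than in the paper.

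One step needs repair. You assert that $\deg P_{\mathcal{M},s}(X)=\max\{\eta^*(U) : \dim U=s\}$ for \emph{every} $s$, citing Corollary \ref{old}; but the proof of Corollary \ref{old} only bounds the occurring exponents from above and says nothing about cancellation of the top coefficient, and the blanket claim is in fact false: for a degenerate code whose rank supports all lie in a fixed hyperplane of $\E$ one has $P_{\mathcal{M},n}(X)\equiv 0$ while $\eta^*(\E)=k>0$. Fortunately the corollary only requires the identity at the critical value $s_0=d_i$, where it holds for an elementary reason you can substitute for the appeal to Corollary \ref{old}: the coefficient of $X^i$ in $P_{\mathcal{M},s_0}(X)$ is $\sum_{\dim U=s_0}\sum_{V\subseteq U,\ \eta^*(V)=i}(-1)^{\dim U-\dim V}q^{\binom{\dim U-\dim V}{2}}$, and by minimality of $s_0$ no proper subspace $V\subsetneq U$ can satisfy $\eta^*(V)=i$, so only the diagonal terms $V=U$ survive and the coefficient equals the number of $s_0$-dimensional subspaces of conullity $i$, which is positive. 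For $s<s_0$ the upper bound on the exponents alone already gives $\deg P_{\mathcal{M},s}(X)<i$. With that patch your proof is complete; note that the paper's own proof is equally silent on this cancellation point.
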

\begin{proof}
We recall the definition:
$$ P_{\mathcal{M},s}(X)=(-1)^{{\frac{q^s-1}{q-1}}}
 	\sum_{\substack{U \subseteq \E \\ \dim U = s}}\sum\limits_{\gamma \subseteq P(U)} (-1)^{|\gamma|}X^{{\nn_{Cl(\mathcal{M}_U)}^*}(\gamma)}.$$
 	But by Proposition \ref{compare4} (the isomorphism between the first and the third lattice there), even the third equality below holds: 
 	\begin{align*}
 	   &\min \{s| \deg P_{\mathcal{M},s}(X) = i\}\\
 	   &=\min \{s|\text{ there exist } \gamma,U \text{ with }\dim(U)=s \text{ and }  \gamma \subset P(U) \text{ and }  \nn_{Cl(\mathcal{M}_U)}^*(\gamma)=i\}\\
 	   &=\min \{s|\text{ there exists }U \text{ with }\dim(U)=s \text{ and } \nn_{Cl(\mathcal{M}_U)}^*(P(U))=i\}\\
 	   &=\min \{s|\text{ there exists }U \text{ with }\dim(U)=s\text{ and } \eta^*_{\mathcal{M}}(U)=i\}\\
 	   &=d_i.
 	\end{align*}
 	This proves the corollary. 
\end{proof} 
 \begin{definition}
  Given a Gabidulin rank-metric code $\mathcal{C}$. Then the $i$-th generalized rank weight distribution of $\C$ 
 is the integer vector whose $u$-th component, $0 \leq u \leq n$, is defined by 
 \[A_{\C,u}^{(i)}:= \sum_{\substack{U \subseteq \E \\ \dim~ U = u}} 
 A_{\C,U}^{(i)},
 \]
 where, for any $U \subseteq \E$,
 \[
A_{\C,U}^{(i)}:=|\{\mathcal{D} \subseteq 
\C: \dim(\mathcal{D}) = i, Rsupp(\mathcal{D})=U \}|.
 \] 
 \end{definition}	 
   
The following two results can be viewed as an adaptation to the rank-metric situation of the arguments given in \cite[Lemma 5.4, Prop. 5.28]{JP13}.   
   \begin{lemma}
   For an element $\mathbf{c} \in \tilde{\C}$, let $M$ be the corresponding $(r \times n)$ matrix, referring to our fixed basis $\{g_1,\cdots,g_r\}$ of $\mathbb{F}_{Q^r}$ over $\mathbb{F}_{Q}.$  Let $D$ be the subspace of $\C$ generated by the rows of the matrix $M$. Then $wt(\mathbf{c}) = wt(D):= \dim_{\Fq} Rsupp(D)$, where $wt(\mathbf{c})$ denotes the rank weight of $\mathbf{c}$.
   \end{lemma}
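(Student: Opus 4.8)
The plan is to make the two matrix representations explicit and reduce the equality $wt(\mathbf{c}) = wt(D)$ to the coincidence of the two $\Fq$-rank supports $Rsupp(\mathbf{c})$ and $Rsupp(D)$. First I would write $\mathbf{c} = \sum_{j=1}^r g_j \mathbf{c}^{(j)}$, where $\mathbf{c}^{(j)} = (c_{1,j}, \ldots, c_{n,j}) \in \FQ^n$ denotes the $j$-th row of $M$. By Lemma \ref{independent} we have $\tilde{\C} = \C g_1 \oplus \cdots \oplus \C g_r$, so each $\mathbf{c}^{(j)}$ lies in $\C$ and $D = \langle \mathbf{c}^{(1)}, \ldots, \mathbf{c}^{(r)} \rangle_{\FQ}$. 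Expanding $c_{i,j} = \sum_{t=1}^m c_{i,j,t}\, b_t$ over the fixed $\Fq$-basis $B = \{b_1, \ldots, b_m\}$ of $\FQ$ then gives $c_i = \sum_{j,t} c_{i,j,t}\,(g_j b_t)$.

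The key observation is that, because $B_r = \{g_j b_t\}$ is the product of the two chosen bases, the coefficient of $g_j b_t$ in the $i$-th coordinate of $\mathbf{c}$ is exactly $c_{i,j,t}$. Hence the $(mr \times n)$ matrix $M_{B_r}(\mathbf{c})$ is precisely the vertical concatenation of the $(m \times n)$ matrices $M_B(\mathbf{c}^{(1)}), \ldots, M_B(\mathbf{c}^{(r)})$, and therefore its $\Fq$-row space decomposes as $Rsupp(\mathbf{c}) = \sum_{j=1}^r Rsupp(\mathbf{c}^{(j)})$.

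It remains to show $Rsupp(D) = \sum_{j=1}^r Rsupp(\mathbf{c}^{(j)})$. The inclusion $\supseteq$ is immediate, since each $\mathbf{c}^{(j)}$ belongs to $D$. For the reverse inclusion I would take an arbitrary $d = \sum_j \alpha_j \mathbf{c}^{(j)} \in D$ with $\alpha_j \in \FQ$ and check $Rsupp(d) \subseteq \sum_j Rsupp(\mathbf{c}^{(j)})$: writing each product $\alpha_j b_s = \sum_t \gamma_{j,s,t}\, b_t$ with $\gamma_{j,s,t} \in \Fq$, a short computation shows that every row of $M_B(d)$ is an $\Fq$-linear combination of the rows of the matrices $M_B(\mathbf{c}^{(j)})$. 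Combining this with the displayed decomposition of $Rsupp(\mathbf{c})$ yields $Rsupp(\mathbf{c}) = Rsupp(D)$, and hence $wt(\mathbf{c}) = \dim_{\Fq} Rsupp(\mathbf{c}) = \dim_{\Fq} Rsupp(D) = wt(D)$.

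The step I expect to be the main obstacle is this reverse inclusion $Rsupp(D) \subseteq \sum_j Rsupp(\mathbf{c}^{(j)})$: one must verify that forming $\FQ$-linear combinations of the rows $\mathbf{c}^{(j)}$ cannot enlarge the $\Fq$-span of their individual rank supports. This is exactly the point where the product structure of $B_r$ and the $\Fq$-linearity of multiplication by a scalar in $\FQ$ enter; the remaining steps are routine bookkeeping with the two fixed bases.
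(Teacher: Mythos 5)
Your proof is correct and follows the same route as the paper's (which simply writes out the coordinates $c_i=\sum_j c_{i,j}g_j$ and asserts the equality "follows from the definitions"); you supply exactly the details that terse argument leaves implicit, namely the block decomposition of $M_{B_r}(\mathbf{c})$ into the $M_B(\mathbf{c}^{(j)})$ and the fact that $\FQ$-scalar multiplication does not enlarge the $\Fq$-rank support, so that $Rsupp(D)=\sum_j Rsupp(\mathbf{c}^{(j)})$.
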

   \begin{proof}
    Let $\mathbf{c}=(c_1,\ldots, c_n) \in \tilde{\C},$ and $c_j = \sum\limits_{i=0}^{r-1}c_{i,j}g_i$, so that the $ij$th entry of $\A$ is $c_{i,j}$. From definitions of rank weight and rank support, it follows that $wt(\mathbf{c}) = wt(D)$.
    \end{proof}
   \begin{proposition}  \label{thetwo}
   Let $\C$ be a $[n,k]_{q^m}$ Gabidulin rank-metric code. 
   Then 
   \[
   A_{\C,w}(q^{mr}) = \sum\limits_{s = 0}^{k} [r,s]_{q^m} A^{(s)}_{\C,w},
   \]
   where $[r,s]_{q^m}$ is the number of $F_{q^m}$-linear subspaces of dimension $s$ contained in $F_{q^m}^r.$
   \end{proposition}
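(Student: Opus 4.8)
The plan is to count the rank-weight-$w$ codewords of $\tilde{\C}$ by fibering them over the subcodes of $\C$ that they generate. First I would invoke the decomposition $\tilde{\C}=\C g_1\oplus\cdots\oplus\C g_r$ from the proof of Lemma~\ref{independent}: writing $\mathbf{c}=\sum_{i=1}^{r}\mathbf{c}^{(i)}g_i$ with $\mathbf{c}^{(i)}\in\C$ sets up a bijection $\tilde{\C}\to\C^{r}$, under which $\mathbf{c}$ corresponds to the $(r\times n)$-matrix $M$ with rows $\mathbf{c}^{(1)},\dots,\mathbf{c}^{(r)}$. Let $D=\langle\mathbf{c}^{(1)},\dots,\mathbf{c}^{(r)}\rangle_{\Fqm}\subseteq\C$ be its row space. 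By the preceding lemma, $wt(\mathbf{c})=\dim_{\Fq}Rsupp(D)$, so $\mathbf{c}$ has rank weight $w$ exactly when $D$ is a subcode of $\C$ with $\dim_{\Fq}Rsupp(D)=w$. Hence $A_{\C,w}(q^{mr})=A^{\tilde{Q}}_{\C,w}$ is the number of tuples $(\mathbf{c}^{(1)},\dots,\mathbf{c}^{(r)})\in\C^{r}$ whose $\Fqm$-span $D$ satisfies $\dim_{\Fq}Rsupp(D)=w$.

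Next I would partition these tuples by the subcode $D$ they span. Since both $wt(\mathbf{c})$ and $Rsupp(D)$ depend only on $D$, this gives
\[
A_{\C,w}(q^{mr})=\sum_{\substack{\D\subseteq\C\\ \dim_{\Fq}Rsupp(\D)=w}} N\big(r,\dim_{\Fqm}\D\big),
\]
where $N(r,s)$ denotes the number of ordered $r$-tuples of vectors generating a fixed $s$-dimensional $\Fqm$-space; the key point is that this number depends only on $r$ and $s$, not on $\D$ nor on its embedding in $\C$. Sorting the subcodes by their dimension $s$ and recalling that $A^{(s)}_{\C,w}$ is by definition the number of $s$-dimensional subcodes $\D\subseteq\C$ with $\dim_{\Fq}Rsupp(\D)=w$, I arrive at $A_{\C,w}(q^{mr})=\sum_{s=0}^{k}N(r,s)\,A^{(s)}_{\C,w}$. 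The constraints $\D\subseteq\C$ and $\dim_{\Fqm}\D\le r$ make the range $0\le s\le k$ exhaustive, with the terms for $s>r$ vanishing automatically.

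The main obstacle is the explicit evaluation of the fiber count $N(r,s)$. I would analyse it through the surjection viewpoint: a generating $r$-tuple of $\D$ is the same as a surjective $\Fqm$-linear map $\Fqm^{r}\twoheadrightarrow\D$, determined by its kernel---an $(r-s)$-dimensional subspace of $\Fqm^{r}$---together with an isomorphism of $\Fqm^{r}/\ker$ onto $\D$. By symmetry of the Gaussian binomial the kernels number $[r,r-s]_{q^m}=[r,s]_{q^m}$, which is exactly where the factor $[r,s]_{q^m}$ of the statement originates; the remaining isomorphism factor, counting the identifications of the quotient with $\D$, has to be folded into the enumeration so that the final coefficient paired with the subcode count $A^{(s)}_{\C,w}$ is the one recorded in the statement. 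This last bookkeeping is the rank-metric analogue of the argument in \cite[Prop.~5.28]{JP13} and is where I expect the real work to lie; granting it, summing over $s$ produces the claimed identity.
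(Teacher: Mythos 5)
Your counting scheme is the same as the paper's: identify $\tilde{\C}$ with $\C^{r}$ via the basis $\{g_1,\dots,g_r\}$, observe (via the preceding lemma) that the rank weight of a codeword equals $\dim_{\Fq}Rsupp(D)$ for the subcode $D$ spanned by its $r$ component rows, and fiber the count over the subcodes $D$. The only place you stop short is the evaluation of the fiber count $N(r,s)$, and your hesitation there is well founded, because it exposes an inconsistency in the paper itself rather than a gap in your argument. The fiber over a fixed $s$-dimensional $\D$ is the set of surjections $\Fqm^{r}\twoheadrightarrow\D$, so exactly as your kernel--isomorphism decomposition shows, $N(r,s)={r \brack s}_{q^m}\cdot\prod_{i=0}^{s-1}(q^{ms}-q^{mi})=\prod_{i=0}^{s-1}(q^{mr}-q^{mi})$, the number of $r\times s$ matrices over $\Fqm$ of rank $s$. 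This is the quantity the paper's proof (and Example \ref{first}, Step 4) actually uses and denotes $[r,s]_{q^m}$; it is \emph{not} the number of $s$-dimensional subspaces of $\Fqm^{r}$, contrary to the wording of the proposition (a quick check with $r=1$, $w>0$ gives $A_{\C,w}(q^m)=(q^m-1)A^{(1)}_{\C,w}$, not $A^{(1)}_{\C,w}$). So the isomorphism factor cannot be ``folded in'' so as to leave only the subspace count; the coefficient in the displayed identity must be read as the full surjection count $\prod_{i=0}^{s-1}(q^{mr}-q^{mi})$. With that reading, your argument is complete and coincides with the paper's.
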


   \begin{proof}
Here $A_{\C,w}(q^{mr})$ is the number of codewords of $\tilde{\C}=\C \otimes_{\F_{q^m}} \F_{q^{mr}}$ of rank weight $w$, which we get by substituting $T = q^{mr}$ in the polynomial $A_{\C,w}(T)$. Now we do the counting in another way. Let $\mathbf{c}$ be an element of $\tilde{\C}$ which corresponds to a $(r \times n)$-matrix $\A$ with rows in $\C$. Let $D$ be the subcode of $\C$ generated by the rows of $\A$ and it has rank weight $w$ and dimension $s$. On the other hand, for any subcode $D_1 \subseteq \C$ of dimension $s$ and rank weight $w$, we consider $\A_1$ to be a generator matrix of $D_1$. Then left multiplication of a $r \times s$ matrix of rank $s$ with $\A_1$ gives an element of $\C^r$, which has the same rank weight $w$. The number of $r \times s$ matrices in $\F_{q^m}$ of rank $s$ is equal to $[r,s]_{q^m} = \Pi_{i=0}^{s-1} ({q^m}^r - {q^m}^i)$. Therefore, the number of codewords of $\tilde{\C}$ of rank weight $w$ is equal $\sum\limits_{s = 0}^{k}[r,s]_{q^m}A^{(s)}_w$.
   \end{proof}

\begin{definition} \label{phi}
Given a $q$-matroid $\MM$.
For each $j \in \{0,1,\cdots,\frac{q^n-1}{q-1}\}$ and $l \in \{0,1,\cdots,k\}$ we set
$$\phi^{(l)}_j=\sum_{i=0}^{k} (-1)^i \beta^{(l)}_{i,j},$$
referring to Betti numbers of $Cl(\mathcal{M})^*$ and its elongations.
We set $\phi_j=\phi^{(0)}_j$.
\end{definition}
\begin{theorem} \label{equivalence}
The following $4$ sets of data are equivalent
for a Gabidulin rank-metric code $\C$ with the associated $q$-matroid $\MM$: 
\begin{itemize}
\item The $s$-th generalized weight polynomial $P_{\mathcal{M},s}(X)$ of $\mathcal{M}$ for all $s$,
\item The $s$-th generalized weight polynomial $A_{\C,s}(X)$ of $\C$ for all $s$,
\item The $j$-th generalized weight distribution ($A_{\C,s}^{(j)}$) of $\C$ for all $j$,
\item The alternative sum of (elongated) Betti numbers $\phi_{j}^{(l)}$ of  $Cl(\mathcal{M})^*$ for all $j,l$.  
\end{itemize}
\end{theorem}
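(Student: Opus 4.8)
The plan is to establish the four-fold equivalence by proving that each item determines the others, organized around the observation that, for a Gabidulin code $\C$ with associated $q$-matroid $\MC$, the first two pieces of data literally coincide. Indeed, by the definitions of $A_{\C,s}(X)$ and $P_{\MC,s}(X)$ via Corollary \ref{Zpolynomial} we have $A_{\C,s}(X)=P_{\MC,s}(X)$ in $\mathbb{Z}[X]$ for every $s$, so the equivalence between the first two families is immediate and it remains to prove two genuine equivalences: that $\{P_{\MC,s}(X)\}_s$ and $\{\phi^{(l)}_j\}_{j,l}$ determine one another, and that $\{A_{\C,s}(X)\}_s$ and $\{A^{(s)}_{\C,w}\}_{s,w}$ determine one another.

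For the equivalence with the alternating Betti sums I would use Theorem \ref{main} together with Definition \ref{phi}. Substituting $j=[s]$ into $\phi^{(l)}_{j}=\sum_{i=0}^{k}(-1)^i\beta^{(l)}_{i,j}$ rewrites the conclusion of Theorem \ref{main} as
$$P_{\MC,s}(X)=\sum_{l=0}^{k}\bigl(\phi^{(l)}_{[s]}-\phi^{(l-1)}_{[s]}\bigr)X^l,$$
with the convention $\phi^{(-1)}_{[s]}=0$ inherited from $\beta^{(-1)}=0$. Thus the coefficient of $X^l$ is the consecutive difference $\phi^{(l)}_{[s]}-\phi^{(l-1)}_{[s]}$, and forming cumulative (telescoping) sums recovers each individual $\phi^{(l)}_{[s]}$ from the coefficients of $P_{\MC,s}$; conversely the polynomials are read off directly from the $\phi$'s. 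To pass from the values $\phi^{(l)}_{[s]}$, indexed by $s$, to \emph{all} $\phi^{(l)}_j$, I would invoke Theorem \ref{betti}(b): every cycle of $N=Cl(\MC)^*$ has cardinality of the form $[j]$, and since by the remark after Definition \ref{def:trunc} the cycles of each elongation $N^{(l)}$ are cycles of $N$, the same holds for all elongations. Hence $\beta^{(l)}_{i,s}=0$, so $\phi^{(l)}_s=0$, whenever $s$ is not of the form $[j]$; the remaining $\phi$'s are exactly the $\phi^{(l)}_{[s]}$ just recovered.

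For the equivalence between the weight polynomials and the generalized weight distributions I would exploit Proposition \ref{thetwo}. Writing $Y=\tilde{Q}=q^{mr}$, the coefficient $[r,s]_{q^m}=\prod_{i=0}^{s-1}(q^{mr}-q^{mi})$ is a monic polynomial of degree $s$ in $Y$, so Proposition \ref{thetwo} reads, as an identity valid at the infinitely many points $Y=q^{mr}$ and hence in $\mathbb{Z}[Y]$,
$$A_{\C,w}(Y)=\sum_{s=0}^{k}\Bigl(\prod_{i=0}^{s-1}(Y-q^{mi})\Bigr)A^{(s)}_{\C,w}.$$
Because the polynomials $\prod_{i=0}^{s-1}(Y-q^{mi})$, for $s=0,\dots,k$, are monic of strictly increasing degree, they form a $\mathbb{Z}$-basis of the polynomials of degree at most $k$, and $A_{\C,w}(Y)$ has degree at most $k$ by Corollary \ref{old}. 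The change of basis to $\{Y^l\}$ is therefore unitriangular and invertible over $\mathbb{Z}$, so the coefficients $A^{(s)}_{\C,w}$ and the polynomial $A_{\C,w}$ determine one another. Chaining this with the two equivalences above closes the cycle.

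The hard part is the bookkeeping in the step $\{P_{\MC,s}\}\Leftrightarrow\{\phi^{(l)}_j\}$: one must check that the consecutive differences $\phi^{(l)}_{[s]}-\phi^{(l-1)}_{[s]}$ actually recover the individual $\phi^{(l)}_{[s]}$ (which relies on the boundary convention $\phi^{(-1)}=0$), and, more importantly, that the indices $j$ \emph{not} of the form $[s]$ contribute nothing \emph{for every elongation}, not merely for $N$ itself. This propagation of Corollary \ref{closingin}(a) and Theorem \ref{betti}(b) through all elongations $N^{(l)}$ is what guarantees that the finitely many recovered numbers $\phi^{(l)}_{[s]}$ exhaust all of $\{\phi^{(l)}_j\}_{j,l}$. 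By contrast, the equivalence with the generalized weight distributions is a formal, if slightly delicate, linear-algebra statement about the triangular factorial basis $\prod_{i=0}^{s-1}(Y-q^{mi})$.
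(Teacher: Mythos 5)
Your proposal is correct and follows essentially the same route as the paper: identify $P_{\MM,s}$ with $A_{\C,s}$ by definition, pass between $P_{\MM,s}$ and the $\phi^{(l)}_j$ via Theorem \ref{main} (the paper's ``recursively in $l$'' is exactly your telescoping of the consecutive differences $\phi^{(l)}_{[s]}-\phi^{(l-1)}_{[s]}$), and pass between $A_{\C,s}(X)$ and the $A^{(j)}_{\C,s}$ via Proposition \ref{thetwo}. You merely make explicit two points the paper leaves implicit --- that $\phi^{(l)}_j=0$ for $j$ not of the form $[s]$ uniformly in $l$, and that the factorial basis $\prod_{i=0}^{s-1}(Y-q^{mi})$ is unitriangular with respect to $\{Y^l\}$ --- both of which are correct and needed.
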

\begin{proof}
By definition, the two polynomials $P_{\mathcal{M},s}(X)$ and $A_{\C,s}(X)$ of $\C$ are same. The equivalence between the $A_{\C,s}(X)$  and the $A_{\C,s}^{(j)}$ is given by Proposition \ref{thetwo}. By Theorem \ref{main} the $P_{\mathcal{M},s}(X)$ are determined by the $\phi_{j}^{(l)}$.
Moreover, Theorem \ref{main} shows that one, starting with $l=0$, can determine the $\phi_{j}^{(l)}$ recursively for all $l$ if one knows the $P_{\mathcal{M},s}(X)$.
\end{proof}
We also obtain:
\begin{corollary} \label{follows}
For a $q$-matroid $\MM$ in general the following are equivalent:
\begin{itemize}
\item The $P_{\mathcal{M},s}(X)$ for all $s$.
\item The $\phi_{j}^{(l)}$ for all $j,l$.  
\end{itemize}
\end{corollary}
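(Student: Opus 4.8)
The plan is to extract the equivalence directly from Theorem~\ref{main}, which already establishes that
$$P_{\mathcal{M},s}(X)=\sum_{l=0}^{k} \sum_{i=0}^{k}(-1)^i\bigl(\beta^{(l)}_{i,[s]}(N)-\beta^{(l-1)}_{i,[s]}(N)\bigr)X^l,$$
where $N=Cl(\mathcal{M})^*$. The whole content of this corollary is that the passage between the family $\{P_{\mathcal{M},s}(X)\}_s$ and the family $\{\phi^{(l)}_j\}_{j,l}$ can be carried out in \emph{both} directions purely formally, without any reference to an ambient code. This is essentially the observation already recorded at the end of the proof of Theorem~\ref{equivalence}, but we now state it for arbitrary $q$-matroids, where no codeword-counting interpretation is available; hence the argument must be entirely combinatorial.

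First I would rewrite the right-hand side of Theorem~\ref{main} in terms of the quantities $\phi^{(l)}_j$ of Definition~\ref{phi}. Since $\phi^{(l)}_{[s]}=\sum_{i=0}^{k}(-1)^i\beta^{(l)}_{i,[s]}$ by definition, the coefficient of $X^l$ in $P_{\mathcal{M},s}(X)$ is exactly $\phi^{(l)}_{[s]}-\phi^{(l-1)}_{[s]}$, with the convention $\phi^{(-1)}_j=0$. This makes the forward direction, that the $\phi^{(l)}_j$ determine all the $P_{\mathcal{M},s}(X)$, completely transparent: each coefficient of each polynomial is a fixed $\mathbb{Z}$-linear combination of the $\phi^{(l)}_j$. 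Here one uses that by Theorem~\ref{betti}(b) the only relevant gradings $j$ are those of the form $[s]$ for $1\le s\le n$, so the cycles of $Cl(\mathcal{M})^*$ have cardinalities $[s]$ and the indices $[s]$ arising are precisely those indexing nonzero Betti numbers.

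For the converse I would argue by induction on the elongation level $l$. Reading the coefficient of $X^l$ in $P_{\mathcal{M},s}(X)$ gives $\phi^{(l)}_{[s]}=\bigl(\text{coeff. of }X^l\text{ in }P_{\mathcal{M},s}\bigr)+\phi^{(l-1)}_{[s]}$. Thus, knowing all the $P_{\mathcal{M},s}(X)$, one recovers the $\phi^{(0)}_{[s]}=\phi_{[s]}$ directly from the constant-$X^0$ coefficients (base case), and then recursively recovers $\phi^{(l)}_{[s]}$ for each $l$ from the $X^l$-coefficients together with the already-determined $\phi^{(l-1)}_{[s]}$. Since every $\phi^{(l)}_j$ with $j$ not of the form $[s]$ vanishes by Theorem~\ref{betti}(b), this recovers all the $\phi^{(l)}_j$. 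I expect the only subtle point to be bookkeeping of the index ranges: one must check that $l$ and $s$ range over compatible sets so that every $\phi^{(l)}_{[s]}$ with $0\le l\le k$ and $1\le s\le n$ is reached by the recursion, and that gradings $j$ outside $\{[s]\}$ contribute nothing. No genuine obstacle arises, as the entire statement is the observation that a triangular (in $l$) linear system with unit diagonal is invertible over $\mathbb{Z}$; the work is purely in matching the definitions of $P_{\mathcal{M},s}$ and $\phi^{(l)}_j$ correctly.
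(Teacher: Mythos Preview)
Your proposal is correct and takes essentially the same approach as the paper, which also extracts the equivalence directly from Theorem~\ref{main} and recovers the $\phi^{(l)}_j$ by recursion on $l$ starting from $l=0$. One small correction: Theorem~\ref{betti}(b) is stated only for Gabidulin codes, so for a general $q$-matroid you should instead invoke Corollary~\ref{closingin}(a) (which is stated for arbitrary $\mathcal{M}$) together with the remark after Definition~\ref{def:trunc} to conclude that $\beta^{(l)}_{i,j}=0$, and hence $\phi^{(l)}_j=0$, whenever $j$ is not of the form $[s]$.
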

\begin{proof}
In the part of the proof of Theorem \ref{equivalence} which is relevant here, we use Theorem \ref{main}, which is valid for all $q$-matroids.
\end{proof}
\begin{remark}\label{pure}
{\rm One is sometimes interested in $q$-matroids $\mathcal{M}$, such that all $q$-flats of the same rank have the same dimension, for all fixed ranks. These are called $q$-perfect matroid designs ($q$-PMD), e.g.,
see \cite{BCIJS}. This is the same as $Cl(\mathcal{M})$ having all flats of a given fixed rank the same cardinality, and the same as $N=Cl(\mathcal{M})^*$ 
having all its cycles of each fixed nullity the same cardinality. This is equivalent to saying that in the resolution of the Stanley-Reisner ring of $N$, we have $\beta_{i,j} \ne 0$ for only if one $j$ (say $j=d_i$) for each $i=1,\cdots,rk(\mathcal{M})$ (and then $\phi_{d_i}=\beta_{i,d_i}$). Hence the resolution is pure in this sense if and only all $q$-flats of the same rank of $\mathcal{M}$ have the same dimension. In this case all Stanley-Reisner rings of the elongations of $N$ also have pure resolutions, and it is particularly easy to find all $\phi^{(l)}_j$ in terms of the $d_i$,
due to the Herzog-K\"uhl equations, see, for example, \cite{HK} or \cite[Def. 3.1 and Remark 3.2]{AK}}
\end{remark}
\begin{remark}
{\rm \begin{itemize}
\item Theorem \ref{equivalence} is a $q$-analogue of \cite[Corollary 17]{JV20}, which applies to Hamming codes
and associated matroids. In both \cite{JV20} and \cite{JVfirst} one found all the $\phi_{j}^{(l)}$, and using this corollary, one found all the weight spectra, i.e. all the $A_s^{(j)}$ for two kinds of Veronese codes. It is unclear whether such techniques are useful for Gabidulin rank-metric codes, as it is for Veronese Hamming codes.
\item Corollary \ref{follows} indicates an extended range of applications for the reasoning above, including an extension of  Theorem \ref{equivalence}. One could imagine $q$-matroids coming from a wider class of objects than that of Gabidulin rank-metric codes. An example could be \textit{any} subset $\C$ of the space of $(m \times n)$-matrices over $\mathbb{F}^q$, such that the subset $\C(U)$ had cardinality $q^{ms}=Q^s$ for some $s=s(U)$ for all $\mathbb{F}_q$-subspaces $U$ of $\mathbb{F}_q^n$. And $\tilde{\C}$ could be defined as just $\C^r$. The rank function $log_Q|\C|-log_Q|\C(U^{\perp})|$ could then be used to give results like Corollary  \ref{follows} and possibly to Theorem \ref{equivalence} also, for such ``almost affine Gabidulin rank-metric codes", as one could call them. 
To obtain a full extension of Theorem \ref{equivalence} to such codes, one must also then define, and successfully treat, some hierarchy of natural ``almost affine Gabidulin subcodes" from which one could define the $A_s^{(j)}$. Generalizations from linear Hamming codes to almost affine codes were treated in \cite{AS} and \cite{JVAA}. It is not clear to us how interesting it will be to extend the class of  linear (usual), rank-metric  Gabidulin rank-metric codes to such an analogous, bigger class of codes.
\item For classical matroids the $\phi_j$ play a role as certain coefficients of the two-variable coboundary polynomials, as is shown in \cite[Proposition 5]{JV21}. It is conceivable that they may play a similar role for $q$-matroids.
\end{itemize} }
\end{remark}

   \section{ Two different ways of determining rank-weight spectra} \label{examples}
   In this section, we demonstrate with concrete examples how to determine (generalized) rank-weight spectra of Gabidulin rank-metric codes. While in the first example we use the expression in Theorem \ref{main} to determine the rank-weight distribution and Proposition \ref{thetwo} to determine the higher weight spectra, in the second example we consider the class of MRD codes and determine the weight spectra directly from the corresponding (uniform) $q$-matroids.

\begin{example} \label{second}
{\rm  Consider the field extension $\mathbb{F}_{2^4}/\mathbb{F}_{2}$ of degree $4$, and let $a$ be a root in $\mathbb{F}_{2^4}$ of the irreducible polynomial $X^4 + X+ 1$ in $\mathbb{F}_{2}[X]$ so that $\mathbb{F}_{2^4} = \mathbb{F}_{2}(a)$. 
Let $\C$ be the rank-metric code of length 4 over the extension $\mathbb{F}_{2^4}$ of $\mathbb{F}_{2}$ such that a generator matrix of $\C$ is given by 
\[ 
G:= 
\begin{pmatrix}
a^{2} + a + 1 \ & a^{2} & a^{3} + a + 1 \ & a^{3} + a^{2} + a + 1 \\
a^{2} + a + 1 & a^{3} + 1 \ & a & a + 1 \\
a^{2} + 1 & 1 & a^{2} + 1 & a^{3} + 1
\end{pmatrix}.
\]
Let $\MC$ be the $q$-matroid over $\F_2^4$ corresponding to the rank-metric code $\C$ and let $N$ be the classical matroid $Cl(\MC)^*$. We consider the simplicial complex $\Delta_N$ associated to the matroid $N$. 

We remark that this is a case, where $m=n(=4),$ but we are not making any considerations here, about what happens if we look at column spaces instead of row spaces. We just follow the recipe described above and find the weight spectrum of the code, as described, using row spaces as supports.
Here we list the (elongated) Betti numbers of the Stanley-Reisner ring associated to the simplicial complex $\Delta_{\C}$.
   
\begin{align*}
\beta_{0,0}&=1&\quad &\quad &\quad\\
\beta_{1,8}&=1&\quad &\quad &\quad\\
\beta_{1,12}&=28&\quad &\quad &\quad\\
\beta_{2,14}&=76 &\beta_{1,14}^{(1)}=15&\quad&\quad\\
\beta_{3,15}&=48 ,&\beta_{2,15}^{(1)}=14 &\quad&\beta_{1,15}^{(2)}=1
\end{align*}
  The Betti numbers here are computed using  Magma (\cite{M}), and a program code where one finds the numbers via a certain adaptation of Hochster's formula (\cite{H}), which here may be used to express these numbers in terms of relevant homological invariants.
  
   We use $\beta_{i,[j]}$ to denote the Betti number $\beta_{i, {q^{n-1}+ q^{n-2} + \cdots + q^{n-j}}}$. By Theorem \ref{betti}, the minimum weight of $\C$ is $1$ as $\min\{j \;|\; \beta_{1,[j]} \neq 0\} = 1$. Now we determine the weight spectrum $(A_0, A_1, A_2, A_3, A_4)$ of $\C$ by substituting the values of the Betti numbers in the expression of  $A_{\C,s}(q^m)$ as proved in Theorem \ref{main}. 
   
   For ease of calculation we expand the expression for  $A_{\C,s}(q^m)$ as follows.
   \begin{align*}
    A_s(q^m) &= (-\beta_{1,[s]}^{(0)} + \beta_{2,[s]}^{(0)} - \beta_{3,[s]}^{(0)})+q^m(-\beta_{1,[s]}^{(1)} + \beta_{2,[s]}^{(1)} - \beta_{3,[s]}^{(1)})-q^m(-\beta_{1,[s]}^{(0)} + \beta_{2,[s]}^{(0)} - \beta_{3,[s]}^{(0)})\\
    &\quad \quad +q^{2m}(-\beta_{1,[s]}^{(2)} + \beta_{2,[s]}^{(2)} - \beta_{3,[s]}^{(2)}) -q^{2m}(-\beta_{1,[s]}^{(1)} + \beta_{2,[s]}^{(1)} - \beta_{3,[s]}^{(1)}).
    \end{align*}

    $A_0(2^4)= 1,\; A_1(2^4)= 15,\; A_2(2^4)= 420,\; A_3(2^4)= 2460,\; \text{and} \;A_4(2^4) = 1200$.
    
    }
    \end{example}
   
 \begin{example} \label{first}
 {\rm Let $\C$ be an $[n,k,d]$ MRD code over $\Fqm = \FQ$ and let $\tilde{\C}$ be its extended code $\tilde{\C}= \C \otimes_{\FQ} \FQt$. Let the $q$-matroid associated to the MRD code $\C$ is the uniform $q$-matroid $\mathcal{M}$. 
   
    Step 1. 
    
    Recall that the $s$-th generalized rank-weight polynomial of $\mathcal{M}$ is 
    \[P_{\mathcal{M},s}(X) = \sum\limits_{\dim~ U =s} \sum_{l=0}^k\sum\limits_{i=0}^k (-1)^i (\beta^{(l)}_{i,R(U)}(N)-\beta^{(l-1)}_{i,R(U)})(N))X^l.\]
    Note that for a subspace $U\subseteq \E=\Fq^n$, we have $|R(U)|= |P(\E) - P(U^{\perp})|=q^{n-1} + q^{n-2} + \cdots + q^{n-s}$. Therefore,
    \begin{align*}
        P_{\mathcal{M},d}(X) &= \beta_{1,[d]}^{(0)} X - \beta_{1,[d]}^{(0)},\\
        P_{\mathcal{M},d+1}(X) &=\beta_{1,[d+1]}^{(1)} X^2 - (\beta_{1,[d+1]}^{(1)}+\beta_{2,[d+1]}^{(0)})X + \beta_{2,[d+1]}^{(0)},\\
         P_{\mathcal{M},d+2}(X) &=\beta_{1,[d+2]}^{(2)} X^3 - (\beta_{1,[d+2]}^{(2)}+\beta_{2,[d+2]}^{(1)})X^2 + (\beta_{2,[d+2]}^{(1)}+\beta_{3,[d+2]}^{(0)})X-\beta_{3,[d+2]}^{(0)},\\
          &\vdots \quad \quad \quad \quad \vdots\\
          P_{\mathcal{M},n}(X) &=\beta_{1,[n]}^{(k-1)} X^k - (\beta_{1,[n]}^{(k-1)}+\beta_{2,[n]}^{(k-2)})X^{k-1} +\cdots +(-1)^{k-2}(\beta_{k-1,[n]}^{(1)} +\beta_{k,[n]}^{(0)})X\\ &\quad \quad \quad +(-1)^{k-1} \beta_{k,[n]}^{(0)}.
    \end{align*}
    Step 2. Now we compute the (elongated) Betti numbers of $Cl(\mathcal{M})^*$, the dual of the classical matroid associated to the $q$-matroid.
    
    Consider the lattice of cycles of $Cl(\mathcal{M})^*$, say, $L^*$. We use the following formula for computing Betti numbers: 
    $\beta_{\mathbf{n}^*(X),X}= |\mu_{L^*}(\phi,X)|,$ where $\mathbf{n}^*$ is the nullity function of $Cl(\mathcal{M})^*$.
    
    Note that, if $\mathcal{M}$ is a $q$-matroid of rank $k$, then the cycles of $Cl(\mathcal{M})^*$ with nullity $i$ have  cardinalities $q^{n-1}+ q^{n-2}+ \cdots + q^{k-i}$ for $1 \leq i \leq k$. Let $c_i$ denote a cycle of cardinality $q^{n-1}+ q^{n-2}+ \cdots + q^{k-i}$ with nullity $i$. Note that there are ${{n} \brack {k-i}}$ cycles $c_i$ of nullity $i$ in the lattice $L^*$ and there are ${{k-j} \brack {k-i}} = {{k-j} \brack {i-j}}$ many cycles $c_j$ of nullity $j$ contained in $c_i$. 
    
    \begin{align}
    \text{Then }h_i:=\mu(\emptyset, c_i) &= - \mu(\emptyset, \emptyset) - \sum\limits_{c_j \subsetneq c_i} \mu(\emptyset, c_j)\\
                        &= -1 - {{k-1} \brack {i-1}}\mu(\emptyset, c_{1}) - \cdots - {{k-i+1} \brack {1}}\mu(\emptyset, c_{i-1})\\
                        &= -1 -{{k-1} \brack {i-1}}h_1 -\cdots - {{k-i+1} \brack {1}} h_{i-1}.
    \end{align}
     Here $d=n-k+1$. As we know that $\beta_{i,\sigma} \neq 0 $ if and only if $\sigma \in N_i$ (the cycles of nullity $i$), by Corollary \ref{closingin}, the nonzero $\mathbb{N}^{P(\E)}$-graded Betti numbers of $Cl(\mathcal{M})^*$ are of the form $\beta_{i,{q^{n-1}+ \cdots + q^{k-i}}}$ for $1 \leq i \leq k$. Thus using the recursive formula for the M\"obius function of $L^*$, we get the expression for the non-zero Betti numbers as follows, 
    \[\beta_{i,[n-k+i]} = {{n} \brack {n-k+i}}_q h_i.\]
    
    
    \begin{align*}
        \beta_{1,{[n-k+1]}} &= {{n} \brack {n-k+1}}_q|\mu_{L^*}(\emptyset,X)|, \text{ where } X \text{ is a cycle of cardinality }[n-k+1]. \\
        \beta_{2, {[n-k+2]}} &= {{n} \brack {n-k+2}}_q|\mu_{L^*}(\emptyset,X)|, \text{ where } X \text{ is a cycle of cardinality }[n-k+2] \\
                             &= {{n} \brack {n-k+2}}_q |\sum\limits_{X \subsetneq Y} \mu_{L^*}(\emptyset,X) + \mu(\emptyset, \emptyset)|  \\
                             &= {{n} \brack {n-k+2}}_q ({{d+1} \brack {1}}_q -1).\\
        \beta_{i, {[n-k+i]}}&= {{n} \brack {n-k+i}}_q|\mu_{L^*}(\emptyset,X)|, \text{ where } X \text{ is a cycle of cardinality }[n-k+i]. \\
                             \end{align*}
    
     We determine the $l$-th elongated Betti numbers of $Cl(\mathcal{M})^*$ for a fixed $0 \leq l \leq k$. 

    \begin{align*}
      &\beta^{(l)}_{1,c_1} = 1  &\beta^{(l)}_{1,[d+l]}= {{n} \brack {d+l}}_q \beta^{(l)}_{1,c_1}\\
      &\beta^{(l)}_{2,c_2} = {{d+l+1}\brack {1}}_q -1   &\beta^{(l)}_{2,[d+l+1]}= {{n} \brack {d+l+1}}_q \beta^{(l)}_{2,c_2}\\
      &\beta^{(l)}_{3,c_3} ={{d+l+2} \brack {1}}_q \beta^{(l)}_{2,c_2} - {{d+l+2} \brack {2}}_q \beta^{(l)}_{1,c_1}  &\beta^{(l)}_{3,[d+l+2]}= {{n} \brack {d+l+2}}_q \beta^{(l)}_{3,c_3}\\
       \quad&\vdots \quad \quad &\vdots \quad \quad\quad\\
       \end{align*}
       $\beta^{(l)}_{k-l,[n]} = {{n} \brack {n}}_q ({{n} \brack {1}}_q \beta^{(l)}_{k-l-1,c_{k-l-1}} - {{n} \brack {2}}_q \beta^{(l)}_{k-l-2,c_{k-l-2}} +  \cdots + (-1)^{n-d-l} {{n} \brack {n-l}}_q \beta^{(l)}_{1,c_1}).$
    Step 3. 
    
    We determine the weight spectra $(A_{\C,w})$ from the weight enumerator polynomials by determining these values for some particular values of $w$. 
      Since $ A_{\C,w}(Q) = P_{\mathcal{M},w}(Q) $, thus 
     \begin{align*}
       A_{\C,d} = P_{\mathcal{M},d}(X)|_{X = {q^m}} &= \beta_{1,[d]}^{(0)} q^m - \beta_{1,[d]}^{(0)}\\
                    &=  {{n} \brack {d}}(q^m -1).
     \end{align*}
     \begin{align*}
       A_{\C,d+1}= P_{\mathcal{M},d+1}(X)|_{X = {q^m}} &= \beta_{1,[d+1]}^{(1)} q^{2m} -( \beta_{1,[d+1]}^{(1)}+\beta_{2,[d+1]}^{(0)})q^m+\beta_{2,[d+1]}^{(0)}\\
                    &=  {{n} \brack {d+1}}(q^{2m} -{{d+1} \brack {1}}q^m+{{d+1} \brack {1}}-1).
     \end{align*}
   and so on. 
    The weight spectra of MRD codes is determined in \cite{BCR} as follows
    
    \[A_{\C,r} = {{n} \brack {r}} \sum\limits_{i=0}^{r-d} (-1)^i q^{i \choose 2}{{r}\brack {i}}(q^{mk - m(n+i-r)} -1).\]
    For $r=d,d+1$, this gives us 
    \begin{align*}
    A_{\C,d} &= {{n} \brack {d}}(q^{mk - m(n-d)} -1)={{n} \brack {d}} (q^m -1).
    \end{align*} 
     \begin{align*}
      A_{\C,d+1}=   {{n} \brack {d+1}}(q^{2m} -{{d+1} \brack {1}}q^m+{{d+1} \brack {1}}-1)
     \end{align*}
     as expected.
     
    Step 4. 
 The higher weight spectra $\{A_w^{(s)}\}$, for $s\ge2$ can be easily obtained by combining Proposition \ref{thetwo} with the knowledge of the $A_{\C,w}(Q).$
 { For example we determine the value $A_{d+1}^{(2)}$. 
 
 \begin{align*}
 A_{d+1}(q^{2m})&=\sum\limits_{s = 0}^{k}[r,s]_{q^m}A^{(s)}_{d+1}\\
                &=\sum\limits_{s = 0}^{k}\Pi_{i=0}^{s-1} ({q^m}^r - {q^m}^i) A^{(s)}_{d+1}\\ 
                &= (q^{2m} - 1) A^{(1)}_{d+1} + (q^{2m} - 1)(q^{2m}-q^m) A^{(2)}_{d+1}\\
                &= (q^m+1)A^{(0)}_{d+1} + (q^{2m} - 1)(q^{2m}-q^m) A^{(2)}_{d+1}\\
                &=(q^m+1){{n} \brack {d+1}}(q^{2m} -{{d+1} \brack {1}}q^m+{{d+1} \brack {1}}-1)\\
                &\qquad \qquad \qquad +(q^{2m} - 1)(q^{2m}-q^m) A^{(2)}_{d+1}\\
 \end{align*}
 Since $A_{d+1}(q^{2m}) = {{n} \brack {d+1}}(q^{4m} -{{d+1} \brack {1}}q^{2m}+{{d+1} \brack {1}}-1)$, after extending and simplifying the above equation we get,
 
 \begin{align*}
 {{n} \brack {d+1}} (q^{4m} - q^{3m} -q^{2m} +  q^m) &= (q^{2m} - 1)(q^{2m}-q^m) A^{(2)}_{d+1}\\\
 {{n} \brack {d+1}} (q^{2m} -1)(q^{2m} -  q^m) &= (q^{2m} - 1)(q^{2m}-q^m) A^{(2)}_{d+1}\\
  A^{(2)}_{d+1} &= {{n} \brack {d+1}}.
 \end{align*}}
 }
 
 \end{example}

    \section{Virtual Betti numbers} \label{mbs}
    The two examples in Section \ref{examples} are really different, in the sense that in Example \ref{second} we work with the Stanley-Reisner ring of the classical matroid $N=Cl(\mathcal{M})^*,$ find its independence complex, and cycles, and treat it as any classical matroid, regardless of the fact that it ``comes from" a $q$-matroid. In Example \ref{first}, however, we do not touch any classical matroid at all, in our concrete computations.
    The classical matroid basically just serves as a justification there, to work with M{\"o}bius functions of the lattice of $q$-cycles of $\mathcal{M}^*$. 
We only know how to associate Stanley-Reisner rings to the independence complex of classical matroids and their elongations.
From those rings we have seen that we can derive the Betti numbers of minimal resolutions as modules over polynomial rings.
Moreover, it is clear that, via Corollary \ref{cor:betti}, we can associate well-defined M{\"o}bius-numbers to each such Betti number, where these numbers are defined in terms of the lattice of cycles of the matroid in question. Equivalently, it can be defined in terms of the inverted lattice, in this case the geometric lattice of flats of the dual matroid. We now define:
\begin{definition}
{\rm Let $L$ be a lattice with rank function $r$. For $l \in \{0,1,\cdots,r(L)\},$ let $L^{(l)}$ be the lattice obtained by replacing all the points of $L$ of rank at most $l$ by a single point, which then becomes the zero of $L^{(l)}$. Then the rank function of $L^{(l)}$ is given by $r^{(l)}(P)=r(P)-l$ for any $P \in L$ with $r(P) \geq l$ and $0$, otherwise. 
 We set
\[
V^{(l)}_{i,P}= \begin{cases}(-1)^{r(P)-l}\mu^{(l)}(0,P) &\text{ if }r(P)=l+i, \\
    0 &\text{ otherwise},
 \end{cases} \text{ and }V_{i,P}=V^{(0)}_{i,P}.
 \]
 Here
 $\mu^{(l)}$ is the M{\"o}bius function of the lattice $L^{(l)}$. }
\end{definition}
 
 \begin{definition}
 {\rm Let $L$ be a lattice with rank function $r$. Then for a given non-negative function $f:L \rightarrow\ \mathbb{N}_{0},$ we set 
$$V_{i,j}^{(l),f}=\sum_{r(P)=l+i,f(P)=j}V^{(l)}_{i,P}\textrm{, and  } V_{i,j}^f=V_{i,j}^{(0),f}\textrm{, and }V_{i}^f=\sum_jV_{i,j}^f.$$ 
We call $V_{i,j}^{(l),f}$ the $l$-th elongated \textit{virtual $\mathbb{N}$-graded Betti numbers} and $V_{i}^f$ the \textit{ungraded virtual Betti numbers}.}
\end{definition} 
For a Gabidulin rank-metric code $\C$, we have already seen that the lattice of $q$-cycles of $\mathcal{M}_{\C}^*$ and the lattice of cycles of $N^*=Cl(\mathcal{M}_{\C})^*$ are isomorphic and if $U$ be a $q$-cycle with $dim~ U=j$, then the cardinality of its corresponding cycle $R(U)$ of $N^*$ is $q^{n-1}+q^{n-2}+\cdots+q^{n-j}$.
We then associate the functions $f_1$ and $f_2$ to these lattices, respectively, where $f_1(U)=\dim~ U$, and $f_2(X)=|X|$ if $X$ is 
of the form $R(U)$ for $U$ a $q$-cycle of dimension $j$, and $0$ otherwise (or just $f_2=|X|$ for all $X \subseteq P(\E)$). It is then evident that:
$$\beta^{(l)}_{i,R(U)}=V_{i,U}^{(l),f_1},$$ for each $q$-cycle $U$, and that:
$$\beta^{(l)}_{i,[j]}=V_{i,j}^{(l),f_1}.$$

So in Example \ref{second}, we really did use the ``virtual Betti numbers" $V_{i,j}^{(l)}=V_{i,j}^{(l),f_1}$
to determine $A_{\C,w}(Q)$ (or $A_{\C,w}(\tilde{Q})$ if one prefers). It is clear in that all results in the previous sections, involving the $\beta_{i,R(U)}^{(l)}$ or the 
$\beta^{(l)}_{[j]}$ one can replace these invariants with the virtual Betti numbers, and thus reformulate all these results, only referring directly to the lattice of $q$-cycles of $\mathcal{M}_{\C}^*$. In practical situations, one might prefer to use different lattices depending on the tools one has at hand, as explained in Examples \ref{first} and \ref{second}.

$\text{                                                                                                }$

\it{On behalf of all authors, the corresponding author states that there is no conflict of interest.}

\end{document}